\title{Refinements of provability and consistency principles for the second incompleteness theorem}
\author{Taishi Kurahashi\footnote{Email: kurahashi@people.kobe-u.ac.jp}
\footnote{Graduate School of System Informatics,
Kobe University,
1-1 Rokkodai, Nada, Kobe 657-8501, Japan.}
}
\date{}
\theoremstyle{plain}
\newtheorem{thm}{Theorem}[section]
\newtheorem{lem}[thm]{Lemma}
\newtheorem{prop}[thm]{Proposition}
\newtheorem{cor}[thm]{Corollary}
\newtheorem{fact}[thm]{Fact}
\newtheorem{prob}[thm]{Problem}
\theoremstyle{definition}
\newtheorem{defn}[thm]{Definition}
\newcommand{\PA}{\mathsf{PA}}
\newcommand{\PR}{\mathrm{Pr}}
\newcommand{\Proof}{\mathrm{Proof}}
\newcommand{\ConH}{\mathrm{Con}^{\mathrm{H}}}
\newcommand{\ConS}{\mathrm{Con}^{\mathrm{S}}}
\newcommand{\ConG}{\mathrm{Con}^{\mathrm{G}}}
\newcommand{\ConL}{\mathrm{Con}^{\mathrm{L}}}
\newcommand{\Ros}{\mathbf{Ros}}
\newcommand{\RFN}{\mathrm{RFN}}
\newcommand{\gn}[1]{\ulcorner#1\urcorner}
\newcommand{\D}[1]{\mathbf{D#1}}
\newcommand{\DU}[1]{\mathbf{D#1^{\mathrm{U}}}}
\newcommand{\DG}[1]{\mathbf{D#1^{\mathrm{G}}}}
\newcommand{\M}{\mathbf{M}}
\newcommand{\MU}{\mathbf{M}^{\mathrm{U}}}
\newcommand{\E}{\mathbf{E}}
\newcommand{\EU}{\mathbf{E}^{\mathrm{U}}}
\newcommand{\C}{\mathbf{C}}
\newcommand{\GCU}{\mathbf{\Gamma C}^{\mathrm{U}}}
\newcommand{\SC}{\mathbf{\Sigma_1 C}}
\newcommand{\SCU}{\mathbf{\Sigma_1 C}^{\mathrm{U}}}
\newcommand{\DCU}{\mathbf{\Delta_0 C}^{\mathrm{U}}}
\newcommand{\CB}{\mathbf{CB}}
\newcommand{\CBP}{\mathbf{CB^+}}
\newcommand{\CBE}{\mathbf{CB_{\exists}}}
\newcommand{\CBEP}{\mathbf{CB_{\exists}^+}}
\newcommand{\PC}{\mathbf{PC}}
\newcommand{\PCU}{\mathbf{PC^\mathrm{U}}}
\newcommand{\Fml}{\mathrm{Fml}_{\mathcal{L}_A}}
\newcommand{\num}{\overline}
\newcommand{\True}{\mathrm{True}}
\newcommand{\Q}{\mathsf{Q}}
\newcommand{\LA}{\mathcal{L}_A}
\newcommand{\s}{\mathsf{s}}
\newcommand{\HB}[1]{\mathbf{HB#1}}
\begin{document}

\maketitle

\begin{abstract}
This paper continues the author's previous study \cite{Kura20}, showing that several weak principles inspired by non-normal modal logic suffice to derive various refined forms of the second incompleteness theorem.
Among the main results of the present paper, we show that the set $\{\mathbf{E},\mathbf{C}, \mathbf{D3}\}$ suffices to establish the unprovability of the consistency statement $\neg\, \mathrm{Pr}_T(\ulcorner 0=1 \urcorner)$. 
We also prove that the set $\{\mathbf{E}^{\mathrm{U}}, \mathbf{CB_{\exists}}\}$ yields formalized $\Sigma_1$-completeness. 
\end{abstract}

\section{Introduction}

The second incompleteness theorem (G2), originally established by G\"odel \cite{Goed31}, states that any sufficiently strong and consistent theory $T$ of arithmetic cannot prove its own consistency. 
However, the correct statement of G2 in fact depends heavily on the choice of provability predicates and the formulation of the consistency statement. 
To state G2 precisely, these parameters must be carefully taken into account.
The author’s previous work \cite{Kura20} provided a systematic account of this complex situation underlying G2. 
It distinguished several formulations of consistency such as G\"odel’s $\ConG_T \equiv \exists x \, \bigl(\Fml(x) \land \neg \, \PR_T(x) \bigr)$, Hilbert and Bernays' $\ConH_T \equiv \forall x\, (\Fml(x) \land \PR_T(x) \to \neg \, \PR_T(\dot{\neg} \, x))$ \cite{HB39} (see also Feferman \cite{Fef60}), and L\"ob’s $\ConL_T : \equiv \neg \, \PR_T(\gn{0 = 1})$ \cite{Lob55}.
It also showed that different derivability conditions for provability predicates are required to establish the unprovability of each. 
This revealed that G2 is not a single theorem, but a collection of several theorems stating the unprovability of consistency statements under appropriate derivability conditions.
This observation suggests building a framework that shows the structural differences among various conditions of provability predicates and formulations of consistency principles.

Among the key results of \cite{Kura20} are the following:
\begin{enumerate}
    \item The combination of the global versions of $\D{2}$ and $\PC$ suffices for G2 of $\ConG_T$ (\cite[Corollary 2.8]{Kura20}), but the combination of the uniform version of $\D{1}$ and the global versions of $\D{2}$ and $\SC$ do not (\cite[Proposition 4.11]{Kura20}). 
    For the details of these principles, please see the next section. 
    
    \item For G2 concerning $\ConH_T$, Hilbert and Bernays essentially showed that the set $\{\M, \CB, \DCU \}$ suffices.
    Jeroslow \cite{Jer73} and Kreisel--Takeuti \cite{KT74} pointed out that $\{\SC\}$ is also sufficient.
    In addition, \cite[Theorems 2.8 and 2.10]{Kura20} established that both $\{\M, \D{3}\}$ and $\{\PC\}$ independently suffice.

    \item For $\ConL_T$, it is well known that the set $\{\D{2}, \D{3}\}$ suffices to derive G2.
    The paper showed that none of the above four sets of derivability conditions sufficient for G2 of $\ConH_T$ is sufficient for G2 of $\ConL_T$ (\cite[Fact 4.6.3, Propositions 4.4 and 4.10]{Kura20}).

    \item The paper also investigates various implication relations among provability conditions. 
    In particular, it was proved that $\SCU$ is derived only from $\MU$ (\cite[Theorem 2.20]{Kura20}).
\end{enumerate}

The properties of provability predicates have long been analyzed through the normal modal logic $\mathsf{GL}$ (cf.~\cite{AB05,Boo93,JD98,Sol76}).
On the other hand, recent studies \cite{Kogu,KK23,Kura24} have investigated provability predicates via the framework of non-normal modal logic.
This approach shows that some principles and rules from non-normal modal logic can be useful in studying the second incompleteness theorem. 
For example, in the paper \cite{Kura21}, the author introduced the schematic consistency statement $\ConS_T = \{\PR_T(\gn{\varphi}) \to \neg\, \PR_T(\gn{\neg\, \varphi}) \mid \varphi$ is a formula$\}$ that corresponds in modal logic to the axiom \textsc{D}: $\Box A \to \neg \Box \neg A$. 
It is well known that \textsc{D} is equivalent, in normal modal logic, to the axiom \textsc{P}: $\neg \Box \bot$, which corresponds to the consistency statement $\ConL_T$.
However, in non-normal modal logics weaker than the weakest normal modal logic $\mathsf{K}$, the axioms \textsc{D} and \textsc{P} are not equivalent in general.
Correspondingly, \cite{Kura21} proved the existence of a Rosser provability predicate for which $\ConS_T$ not is provable, thereby the versions of G2 for $\ConL_T$ and $\ConS_T$ are shown to be distinct.

Motivated by these developments, the present paper further investigates principles and rules that have been studied in the context of non-normal modal logic, within the framework of the second incompleteness theorem.
In particular, we focus on the three rules \textsc{RE}: $\dfrac{A \leftrightarrow B}{\Box A \leftrightarrow \Box B}$, \text{RM}: $\dfrac{A \to B}{\Box A \to \Box B}$, and \textsc{Nec}: $\dfrac{A}{\Box A}$, and the axiom \textsc{C}: $\Box A \land \Box B \to \Box (A \land B)$ (cf.~\cite{Che80,Pac17}).
In standard non-normal modal logic, study is typically based on the rule \textsc{RE} from the perspective of neighborhood semantics (cf.~\cite{Pac17}).

By contrast, our analysis is based on the rule \textsc{Nec}, reflecting the perspective of provability predicates.
Indeed, in \cite{Kura24}, it was proved that the non-normal modal logic $\mathsf{N}$, which is obtained from classical propositional logic by adding the rule \textsc{Nec} and was introduced by Fitting et al.~\cite{FMT92}, coincides with the provability logic of all provability predicates. 
The paper \cite{Kura24} also introduced the logic $\mathsf{NR}$ that is obtained from $\mathsf{N}$ by adding the Rosser rule \textsc{Ros}: $\dfrac{\neg A}{\neg \Box A}$ and proved that $\mathsf{NR}$ is exactly the provability logic of all Rosser provability predicates. 
The rule \textsc{Ros} lies between \textsc{P} and \textsc{D} over $\mathsf{N}$, and they are strictly distinguished. 
This suggests that they correspond to different consistency principles in the context of arithmetic. 
The rule $\M$ of arithmetic corresponding to \textsc{RM} has already been introduced as early as in the work of Hilbert and Bernays \cite{HB39}, and its modal logical study was established in \cite{KK23}. 

In the present paper, we introduce and examine the rule $\E$ and the principle $\C$ of arithmetic corresponding to \textsc{RE} and \textsc{C}, respectively.
In addition to the schematic consistency $\ConS_T$, we also introduce a new consistency rule $\Ros$, which corresponds to the Rosser rule \textsc{Ros}, and we analyze G2 pertaining to these formulations.

The main results of the present paper are summarized as follows:

\begin{itemize}
    \item We show that most of the results previously derived using $\M$ can in fact be obtained using the weaker principle $\E$ instead. 
    In particular, the unprovability of $\ConS_T$ follows from $\{\E, \D{3}\}$ (Theorem \ref{G2_refine_E}).

    \item The principle $\C$ ensures the equivalence between $T \vdash \ConS_T$ and the validity of $\Ros$, while $\E$ guarantees the equivalence between the validity of $\Ros$ and $T \vdash \ConL_T$ (Proposition \ref{EC}). 
    Combining these results, we obtain that the unprovability of $\ConL_T$ from $\{\E, \C, \D{3}\}$, which refines the classical result that $\{\D{2}, \D{3}\}$ suffices for the unprovability of $\ConL_T$. 
    We prove that $\{\E, \C, \D{3}\}$ is in fact strictly weaker than $\{\D{2}, \D{3}\}$ (Theorem \ref{ex1}). 

    \item We prove that every $\Sigma_1$ provability predicate satisfying $\C$ and $\D{3}$ does not satisfy $\Ros$ (Theorem \ref{G2_C3}). 
    This a new version of G2 concerning the newly introduced principle $\C$ and consistency rule $\Ros$. 

    \item We prove that $\M$ is redundant for Hilbert and Bernays’ version of G2, that is, the set $\{\CB, \DCU\}$ suffices to derive the unprovability of $\ConH_T$ (Theorem \ref{HB_refine}).
    This is proved by showing that $\CB$ is sufficient to establish the unprovability of the $\Delta_0$ uniform reflection principle $\RFN_T(\Delta_0)$. 

    \item We prove that the set $\{\EU, \CBE\}$ is sufficient for $\SC$ (Theorem \ref{EU_to_SCU}). 
    Thus, $\{\EU, \CBE\}$ suffices for G2 of $\ConS_T$, and $\{\EU, \CBE, \C\}$ suffices for G2 of $\ConL_T$. 
\end{itemize}

Figure \ref{Fig1} summarizes several prominent implications between the provability conditions and the versions of G2. 

\begin{figure}[ht]
\centering
\begin{tikzpicture}
\node (ConG) at (0,-1) {$\nvdash \ConG_T$};
\node (ConL) at (2,-1) {$\nvdash \ConL_T$};
\node (Ros) at (4,-1) {non-$\mathbf{Ros}$};
\node (ConS) at (7,-1) {$\nvdash \ConS_T$};
\node (ConH) at (9,-1) {$\nvdash \ConH_T$};

\node (EC3) at (2,2) {$\{\E,\C,\D{3}\}$};
\node (EC3g) at (2,1) {$\{\E,\C,\D{3}^n_{n+k}\}$};
\node (E3) at (5.7,1) {$\{\E,\D{3}\}$};
\node (S) at (7,1) {$\{\SC\}$};

\node (C3) at (4,1) {$\{\C,\D{3}\}$};
\node (C3g) at (4,0) {$\{\C,\D{3}^n_{n+k}\}$};

\node (EUCBE) at (5.7,2) {$\{\EU,\CBE\}$};

\node (23) at (2,3) {$\{\D{2},\D{3}\}$};

\node (MU) at (5.7,3) {$\{\MU\}$};
\node (1U2U) at (2,5) {$\{\DU{1},\DU{2}\}$};
\node (EUCBEC) at (2,4) {$\{\EU, \CBE, \C\}$};

\node (DCUCB) at (9,1) {$\{\CB, \DCU\}$};
\node (2GPCG) at (0,6) {$\{\DG{2},\PC^{\mathrm{G}}\}$};

\draw [->, double] (ConG)--(ConL);
\draw [->, double] (ConL)--(Ros);
\draw [->, double] (Ros)--(ConS);
\draw [->, double] (ConS)--(ConH);

\draw [->, double] (EC3)--(EC3g);
\draw [->, double] (EC3g)--(C3g);
\draw [->, double] (EC3g)--(ConL);
\draw [->, double] (EC3)--(E3);
\draw [->, double] (E3)--(ConS);
\draw [->, double] (23)--(EC3);
\draw [->, double] (MU)--(EUCBE);
\draw [->, double] (EUCBE)--(E3);
\draw [->, double] (EUCBEC)--(23);
\draw [->, double] (EUCBEC)--(EUCBE);
\draw [->, double] (EUCBE)--(S);
\draw [->, double] (S)--(ConS);
\draw [->, double] (1U2U)--(EUCBEC);
\draw [->, double] (1U2U)--(MU);
\draw [->, double] (C3)--(C3g);
\draw [->, double] (C3g)--(Ros);
\draw [->, double] (EC3)--(C3);
\draw [->, double] (MU)--(DCUCB);
\draw [->, double] (DCUCB)--(ConH);
\draw [->, double] (2GPCG)--(ConG);
\draw [->, double] (2GPCG)--(1U2U);


\end{tikzpicture}
\caption{Implications between the derivability conditions and the versions of G2 for $n, k \geq 1$ and $\Sigma_1$ provability predicates}\label{Fig1}
\end{figure}
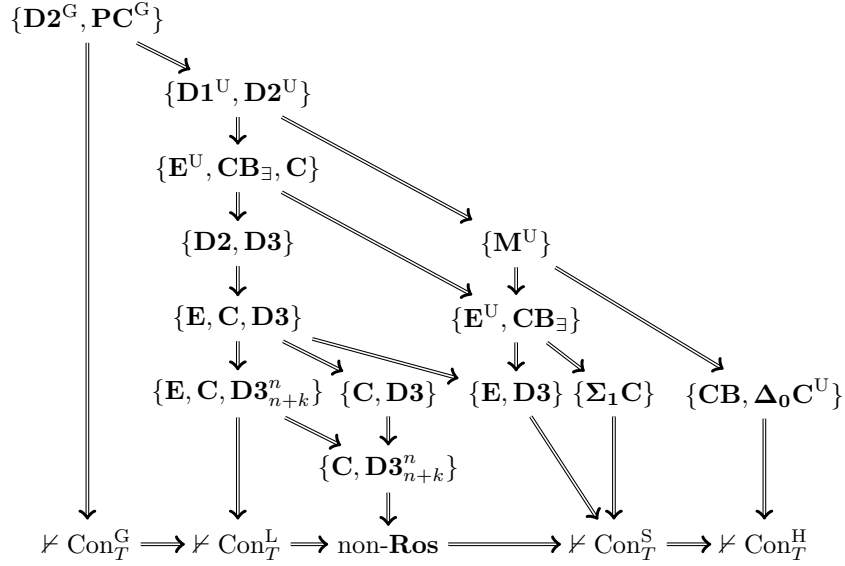

Recently, Haruka Kogure extensively investigated modal logical aspects of $\E$, $\C$, $\ConL_T$, $\Ros$, and $\ConS_T$ from the viewpoint of provability logic. 
Her results will be announced in the near future.

\section{Local derivability conditions}

Let $\LA$ be the language $\{0, \s, +, \times, \leq\}$ of first-order arithmetic. 
For each natural number $n$, let $\num{n}$ denote the numeral $\underbrace{\s(\s(\cdots \s}_{n\ \text{times}}(0) \cdots ))$ of $n$. 
Throughout the present paper, let $S$ and $T$ be any consistent c.e.~extensions of Peano Arithmetic $\PA$ in the language $\LA$ such that $T$ is stronger than $S$. 
The theory $S$ is intended as a meta-theory that reasons about $T$. 
In most cases, $S$ is taken to be either $\PA$ or $T$. 
We say that a formula $\PR_T(x)$ is a \textit{provability predicate} of $T$ if for any $\LA$-formula $\varphi$, we have $T \vdash \varphi$ if and only if $S \vdash \PR_T(\gn{\varphi})$. 
Throughout this paper, we assume that $\PR_T(x)$ always denotes a $\Sigma_1$ provability predicate of $T$. 

In \cite{Kura20}, three types of derivability conditions---local, uniform, and global---were systematically classified and studied.
In this paper, we discuss the local versions in this section, and turn to the uniform versions in the next section. 
We begin by recalling the local derivability conditions. 

\begin{defn}[Local derivability conditions]
\leavevmode
\begin{description} 
	\item [$\D{1}$] If $T \vdash \varphi$, then $S \vdash \PR_T(\gn{\varphi})$ for any formula $\varphi$. 
	\item [$\D{2}$] $S \vdash \PR_T(\gn{\varphi \to \psi}) \to \bigl(\PR_T(\gn{\varphi}) \to \PR_T(\gn{\psi}) \bigr)$ for any formulas $\varphi$ and $\psi$. 
	\item [$\D{3}$] $S \vdash \PR_T(\gn{\varphi}) \to \PR_T(\gn{\PR_T(\gn{\varphi})})$ for any formula $\varphi$. 
	\item [$\SC$] $S \vdash \varphi \to \PR_T(\gn{\varphi})$ for any $\Sigma_1$ sentence $\varphi$. 
	\item [$\M$] If $T \vdash \varphi \to \psi$, then $S \vdash \PR_T(\gn{\varphi}) \to \PR_T(\gn{\psi})$ for any formulas $\varphi$ and $\psi$.  
	\item [$\PC$] $S \vdash \PR_\emptyset(\gn{\varphi}) \to \PR_T(\gn{\varphi})$ for any formula $\varphi$. 
    \item [$\SC^-$] There exists a finite subtheory $T_0$ of $T$ such that for any $\Sigma_1$ sentence $\varphi$, we have $S \vdash \varphi \to \PR_T(\gn{\bigwedge T_0 \to \varphi})$. 
    By convention, we regard $\bigwedge \emptyset$ as $0=0$. 
\end{description}
\end{defn}

Here, $\PR_\emptyset(x)$ denotes a standard provability predicate of pure first-order logic in $\LA$. 
We note that every provability predicate $\PR_T(x)$ satisfies $\D{1}$. 
The implications ``$\D{2} \Rightarrow \M$'', ``$\SC \Rightarrow \D{3}$'', and ``$\SC \Rightarrow \SC^-$'' are obvious. 
Here, ``$\D{2} \Rightarrow \M$'' is an abbreviation for the statement ``every $\Sigma_1$ provability predicate satisfying $\D{2}$ also satisfies $\M$''. 
For the sake of simplicity, we adopt this kind of notation throughout the present paper.

\begin{fact}\label{PC_SC}\leavevmode
\begin{enumerate}
    \item \textup{(\cite[Proposition 2.4.6 and Remark 2.11]{Kura20})} $\PC \Rightarrow \SC^-$. 
    \item \textup{(\cite[Proposition 2.4.6 and Remark 2.11]{Kura20})} $\M$ \& $\SC^- \Rightarrow \SC$. 
    \item \textup{(\cite[Proposition 2.4.6]{Kura20})} $\M$ \& $\SC \Rightarrow \PC$. 
\end{enumerate}
\end{fact}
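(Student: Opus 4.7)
The plan is to treat the three clauses in turn, with a fixed finite subtheory $T_0 \subseteq T$ (say, Robinson's $\Q$, or any finite fragment strong enough for formalized $\Sigma_1$-completeness) serving as common scaffolding. The underlying ingredient I would invoke throughout is the standard fact $\PA \vdash \sigma \to \PR_\emptyset(\gn{\bigwedge T_0 \to \sigma})$ for every $\Sigma_1$ sentence $\sigma$: this is just formalized $\Sigma_1$-completeness of $T_0$ expressed with respect to pure logic via the finite axiomatizability of $T_0$.

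For (1), combining the above (which holds in $S \supseteq \PA$) with $\PC$ applied to the formula $\bigwedge T_0 \to \sigma$ yields $S \vdash \sigma \to \PR_T(\gn{\bigwedge T_0 \to \sigma})$, which is exactly $\SC^-$ with witness $T_0$. For (2), I would start from $\SC^-$ with witness $T_0$: since $T \vdash \bigwedge T_0$, the trivial implication $T \vdash (\bigwedge T_0 \to \sigma) \to \sigma$ together with $\M$ gives $S \vdash \PR_T(\gn{\bigwedge T_0 \to \sigma}) \to \PR_T(\gn{\sigma})$, and composing with $\SC^-$ produces $\SC$.

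For (3), I would apply $\SC$ to the $\Sigma_1$ sentence $\PR_\emptyset(\gn{\varphi})$ to obtain $S \vdash \PR_\emptyset(\gn{\varphi}) \to \PR_T(\gn{\PR_\emptyset(\gn{\varphi})})$, and then strip the outer $\PR_\emptyset$ by $\M$ applied to the local reflection principle $T \vdash \PR_\emptyset(\gn{\varphi}) \to \varphi$; this last fact is available already in $\PA$, via the standard partial-truth-predicate argument at the complexity of the fixed $\varphi$. Composing the resulting $S \vdash \PR_T(\gn{\PR_\emptyset(\gn{\varphi})}) \to \PR_T(\gn{\varphi})$ with the $\SC$-step gives $\PC$. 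The main obstacle lies here: $T \vdash \PR_\emptyset(\gn{\varphi}) \to \varphi$ is a genuine (though standard) proof-theoretic fact rather than a triviality, and using it cleanly across all $\varphi$ requires the partial truth predicate to be chosen uniformly in the complexity of $\varphi$. By contrast, parts (1) and (2) are essentially bookkeeping once $T_0$ has been fixed.
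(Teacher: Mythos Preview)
Your proof is correct, and the paper does not actually supply its own proof of this Fact, instead citing \cite{Kura20}; however, the paper's refinements of parts (1) and (2)---namely Proposition~\ref{EU_to_D1U}.2 and the proposition immediately following Fact~\ref{PC_SC}---proceed exactly along the lines you describe (formalized $\Sigma_1$-completeness plus deduction theorem to get into $\PR_\emptyset$, then $\PC$; and using $T \vdash (\bigwedge T_0 \to \sigma) \leftrightarrow \sigma$ to collapse the implication inside $\PR_T$). Your argument for (3) via $\SC$ applied to the $\Sigma_1$ sentence $\PR_\emptyset(\gn{\varphi})$ followed by $\M$ on the local reflection $\PA \vdash \PR_\emptyset(\gn{\varphi}) \to \varphi$ is the standard one, and your identification of the nontrivial ingredient (reflection for pure logic via partial truth predicates and formalized cut elimination) is accurate.
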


Therefore, among provability predicates satisfying $\M$, the conditions $\PC$, $\SC$, and $\SC^-$ are equivalent.

Let $\Fml(x)$ be a natural formula saying that $x$ is a G\"odel number of an $\LA$-formula. 
Let $\dot{\neg} \, x$ be a term corresponding to the primitive recursive function calculating the G\"odel number of $\neg \, \varphi$ from that of $\varphi$. 
We may freely use such terms in defining formulas. 
In \cite{Kura20}, the following two consistency statements were distinguished. 

\begin{defn}\leavevmode
\begin{enumerate}
	\item $\ConH_T : \equiv \forall x \, (\Fml(x) \land \PR_T(x) \to \neg \, \PR_T(\dot{\neg} \, x))$. 
	\item $\ConL_T : \equiv \neg \, \PR_T(\gn{0 = 1})$. 
\end{enumerate}
\end{defn}

Here, H and L stand for ``Hilbert and Bernays'' and ``L\"ob'', respectively. 
Several variations of G2 corresponding to these formulations of consistency have been established.

\begin{fact}[Several versions of G2]\label{previous_G2}\leavevmode
\begin{enumerate}
    \item \textup{(L\"ob \cite{Lob55})} $\D{2}$ \& $\D{3} \Rightarrow T \nvdash \ConL_T$. 

    \item \textup{(\cite[Theorems 2.8]{Kura20})} $\M$ \& $\D{3} \Rightarrow T \nvdash \ConH_T$. 

    \item \textup{(\cite[Theorems 2.10 and Remark 2.11]{Kura20})} $\SC^- \Rightarrow T \nvdash \ConH_T$. 
\end{enumerate}
\end{fact}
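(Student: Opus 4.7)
I would prove all three items on a common backbone. Let $\gamma$ be the G\"odel fixed point with $S \vdash \gamma \leftrightarrow \neg\,\PR_T(\gn{\gamma})$. In each case, the goal is to derive, inside $S$, that the relevant consistency statement implies $\gamma$. Once $T \vdash \gamma$ is established from the hypothesis, $\D{1}$ gives $T \vdash \PR_T(\gn{\gamma})$, while the fixed-point equivalence gives $T \vdash \neg\,\PR_T(\gn{\gamma})$, contradicting the consistency of $T$. Only the derivation of ``consistency $\to \gamma$'' differs between the three items.

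For item (1), I would run L\"ob's original computation. Since $T$ proves $\gamma \to \neg\,\PR_T(\gn{\gamma})$, $\D{1}$ internalises it and $\D{2}$ converts it to $S \vdash \PR_T(\gn{\gamma}) \to \PR_T(\gn{\neg\,\PR_T(\gn{\gamma})})$. Composing with $\D{3}$, which yields $S \vdash \PR_T(\gn{\gamma}) \to \PR_T(\gn{\PR_T(\gn{\gamma})})$, and using $\D{2}$ once more, one obtains $S \vdash \PR_T(\gn{\gamma}) \to \PR_T(\gn{0=1})$. Contraposing against $\ConL_T$ gives $S \vdash \ConL_T \to \gamma$, and the common finale closes the argument.

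For item (2), the rule $\M$ replaces the $\D{1}$--$\D{2}$ internalisation: applied to the $T$-provable implication $\PR_T(\gn{\gamma}) \to \neg\,\gamma$ it produces $S \vdash \PR_T(\gn{\PR_T(\gn{\gamma})}) \to \PR_T(\gn{\neg\,\gamma})$, which composes with $\D{3}$ to give $S \vdash \PR_T(\gn{\gamma}) \to \PR_T(\gn{\neg\,\gamma})$. Instantiating $\ConH_T$ at the concrete formula $\gamma$ rules out the conjunction $\PR_T(\gn{\gamma}) \wedge \PR_T(\gn{\neg\,\gamma})$, so $S \vdash \ConH_T \to \neg\,\PR_T(\gn{\gamma})$, i.e., $S \vdash \ConH_T \to \gamma$.

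For item (3), the new ingredient is that $\neg\,\gamma$ is $\Sigma_1$, so $\SC^-$ yields a finite $T_0 \subseteq T$ with $S \vdash \PR_T(\gn{\gamma}) \to \PR_T(\gn{\bigwedge T_0 \to \neg\,\gamma})$. Without $\D{2}$ or $\M$ the antecedent $\bigwedge T_0$ cannot be stripped, so the argument of (2) does not port over directly. I expect the remedy to be a tailored diagonal sentence $\sigma$ with $S \vdash \sigma \leftrightarrow \neg\,\PR_T(\gn{\bigwedge T_0 \to \sigma})$, chosen so that the overhead $\bigwedge T_0$ arises symmetrically on both sides of any later use of $\ConH_T$. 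The main obstacle, and the technical heart of this case, is matching the shape of the $\SC^-$ output, namely $\PR_T(\gn{\bigwedge T_0 \to \chi})$, with the shape demanded by $\ConH_T$, whose classical negation of $\bigwedge T_0 \to \sigma$ is $\bigwedge T_0 \wedge \neg\,\sigma$ rather than $\bigwedge T_0 \to \neg\,\sigma$. Reconciling the two by a careful propositional manipulation internal to $S$, and then invoking $\ConH_T$ at a well-chosen formula, should yield $S \vdash \ConH_T \to \sigma$, after which the finale applies with $\sigma$ in place of $\gamma$.
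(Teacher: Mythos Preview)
Your treatments of items (1) and (2) are correct and match the standard arguments; the paper does not re-prove these but its refinement (Theorem~\ref{G2_refine_E}.1) runs the same Gödel sentence through $\E$ in place of your $\M$-step, which is the natural generalisation.

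Item (3), however, has a genuine gap. The negative fixed point you propose, $\sigma \leftrightarrow \neg\,\PR_T(\gn{\bigwedge T_0 \to \sigma})$, leads via $\SC^-$ to $S \vdash \neg\,\sigma \to \PR_T(\gn{\bigwedge T_0 \to \neg\,\sigma})$, so from $\neg\,\sigma$ you have both $\PR_T(\gn{\bigwedge T_0 \to \sigma})$ and $\PR_T(\gn{\bigwedge T_0 \to \neg\,\sigma})$. But these two formulas are \emph{not} syntactic negations of one another, and $\ConH_T$ speaks only about a formula and its literal negation $\dot{\neg}\,x$. The ``careful propositional manipulation internal to $S$'' you promise cannot exist: without $\E$ or $\M$ there is no way to pass from $\PR_T(\gn{\bigwedge T_0 \to \neg\,\sigma})$ to $\PR_T(\gn{\neg\,(\bigwedge T_0 \to \sigma)})$, even though the inner formulas are propositionally equivalent. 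This is precisely the point of isolating $\SC^-$ from $\M$.

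The fix is to abandon the Gödel-type diagonal and use a \emph{positive} (Henkin--Jeroslow) one instead, as the paper does in Theorem~\ref{G2_refine_E}.2: take a $\Sigma_1$ sentence $\sigma$ with $\PA \vdash \sigma \leftrightarrow \PR_T\bigl(\gn{\neg\,(\bigwedge T_0 \to \sigma)}\bigr)$. Now $\sigma$ itself is $\Sigma_1$, so $\SC^-$ gives $S \vdash \sigma \to \PR_T(\gn{\bigwedge T_0 \to \sigma})$, and the fixed point gives $S \vdash \sigma \to \PR_T(\gn{\neg\,(\bigwedge T_0 \to \sigma)})$. These two \emph{are} a formula and its syntactic negation, so $\ConH_T$ yields $\neg\,\sigma$ directly. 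From $T \vdash \neg\,\sigma$ and $T \vdash \bigwedge T_0$ one gets $T \vdash \neg\,(\bigwedge T_0 \to \sigma)$; then $\D{1}$ and the fixed-point equivalence give $S \vdash \sigma$, a contradiction. The switch from negative to positive diagonalisation is the missing idea.
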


The third clause is a refinement of G2 of Jeroslow \cite{Jer73} and Kreisel and Takeuti \cite{KT74} stating ``$\SC \Rightarrow T \nvdash \ConH_T$.''

\subsection{Refinements}

As stated in the introduction, in this paper we introduce not only the derivability conditions and consistency statements as above, but also several additional principles and rules corresponding to axioms and rules that have been studied in the context of non-normal modal logic, and we examine their effects.
The rule \textsc{RE}: $\dfrac{A \leftrightarrow B}{\Box A \leftrightarrow \Box B}$ and the axiom \textsc{C}: $\Box A \land \Box B \to \Box (A \land B)$ have been extensively studied in non-normal modal logic weaker than $\mathsf{K}$.
Moreover, the axiom \textsc{P}: $\neg \, \Box \bot$ corresponds to the consistency statement $\ConL_T$, while the axiom \textsc{D}: $\Box A \to \neg \, \Box \neg\, A$ corresponds to the following schematic consistency statement $\ConS_T$, introduced in \cite{Kura21}.
Additionally, we introduce a rule $\Ros$, which corresponds to the modal rule \textsc{Ros}: $\dfrac{\neg\, A}{\neg\, \Box A}$ introduced and investigated in \cite{Kura24}.

\begin{defn}\leavevmode
\begin{description} 
	\item [$\E$] If $T \vdash \varphi \leftrightarrow \psi$, then $S \vdash \PR_T(\gn{\varphi}) \leftrightarrow \PR_T(\gn{\psi})$ for any formulas $\varphi$ and $\psi$. 

    \item [$\C$] $S \vdash \PR_T(\gn{\varphi}) \land \PR_T(\gn{\psi}) \to \PR_T(\gn{\varphi \land \psi})$ for any formulas $\varphi$ and $\psi$. 

 	\item $\ConS_T : \equiv \{\PR_T(\gn{\varphi}) \to \neg \, \PR_T(\gn{\neg\, \varphi}) \mid \varphi$ is a formula$\}$. 
    
	\item [$\Ros$] If $T \vdash \neg \, \varphi$, then $T \vdash \neg \, \PR_T(\gn{\varphi})$ for any formula $\varphi$. 
\end{description}
\end{defn}

The implication ``$\M \Rightarrow \E$'' and the equivalence ``$\D{2} \Leftrightarrow \M$ \& $\C$'' hold obviously.

We show the following relations between the four consistency statements $\ConH_T$, $\ConS_T$, $\Ros$, and $\ConL_T$. 
In particular, it is noteworthy that $\C$ and $\E$ contribute to enhancing the strength of the corresponding consistency statements.

\begin{prop}\label{EC}\leavevmode
\begin{enumerate}
    \item $\PA + \ConH_T \vdash \ConS_T$.
    \item $T \vdash \ConS_T \Rightarrow \Ros$. 
    \item $\Ros$ \& $\C \Rightarrow T \vdash \ConS_T$. 
    \item $\Ros \Rightarrow T \vdash \ConL_T$. 
    \item $T \vdash \ConL_T$ \& $\E \Rightarrow \Ros$. 
\end{enumerate}
\end{prop}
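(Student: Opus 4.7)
The proposition splits into five essentially independent routine verifications, each reducing to manipulating the definitions of the principles and the consistency statements. My plan is to treat them in order.

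For (1), I would observe that $\ConH_T$ is a single universally quantified formula while $\ConS_T$ is a schema. I would instantiate $\ConH_T$ at $x = \gn{\varphi}$ and use that $\PA$ proves both $\Fml(\gn{\varphi})$ (since $\varphi$ is a genuine formula) and the identity $\dot{\neg}\,\gn{\varphi} = \gn{\neg\,\varphi}$ by the definition of the term $\dot{\neg}$ on numerals.

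For (2), I would assume $T \vdash \ConS_T$ and $T \vdash \neg\,\varphi$. From $\D{1}$ (which holds automatically for every provability predicate, as noted after the definitions) I get $S \vdash \PR_T(\gn{\neg\,\varphi})$, hence $T \vdash \PR_T(\gn{\neg\,\varphi})$. The relevant instance of $\ConS_T$ in $T$ is $\PR_T(\gn{\varphi}) \to \neg\, \PR_T(\gn{\neg\,\varphi})$; taking the contrapositive yields $T \vdash \neg\, \PR_T(\gn{\varphi})$, which is the conclusion of $\Ros$. For (3), given $\Ros$ and $\C$, I would start from $T \vdash \neg(\varphi \land \neg\,\varphi)$ to obtain $T \vdash \neg\, \PR_T(\gn{\varphi \land \neg\,\varphi})$ by $\Ros$, then apply $\C$ (which holds in $S$, and hence in $T$) in its contrapositive form $\neg\, \PR_T(\gn{\varphi \land \neg\,\varphi}) \to \neg\bigl(\PR_T(\gn{\varphi}) \land \PR_T(\gn{\neg\,\varphi})\bigr)$ to conclude $T \vdash \PR_T(\gn{\varphi}) \to \neg\, \PR_T(\gn{\neg\,\varphi})$ for every $\varphi$, which is $T \vdash \ConS_T$.

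Part (4) is immediate: $T \vdash \neg\,(0=1)$, so $\Ros$ gives $T \vdash \neg\, \PR_T(\gn{0=1})$, i.e.\ $T \vdash \ConL_T$. For (5), assume $T \vdash \ConL_T$ and $T \vdash \neg\,\varphi$. Then $T \vdash \varphi \leftrightarrow 0=1$ (the forward direction from $\neg\,\varphi$, the backward direction from $\neg\,(0=1)$), so $\E$ yields $S \vdash \PR_T(\gn{\varphi}) \leftrightarrow \PR_T(\gn{0=1})$, hence the same equivalence holds in $T$. Combining with $T \vdash \neg\, \PR_T(\gn{0=1})$ gives $T \vdash \neg\, \PR_T(\gn{\varphi})$, as required.

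No step is a real obstacle; the only subtle point is keeping track of which formulas are being proved in $S$ versus in $T$, and making sure to invoke $\D{1}$ (and the fact that $T$ extends $S$) when transferring an $S$-provability fact into $T$. I would state the five parts as separate short paragraphs to keep this bookkeeping transparent.
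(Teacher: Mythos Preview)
Your proposal is correct and follows essentially the same approach as the paper's proof; each of the five parts is handled by the same elementary manipulation of the definitions (instantiation of $\ConH_T$, $\D{1}$ plus the $\ConS_T$ instance, $\Ros$ combined with the contrapositive of $\C$, $\Ros$ applied to $0=1$, and $\E$ applied to $\varphi \leftrightarrow 0=1$). The only differences are cosmetic: you spell out part (1) while the paper just says ``Obvious'', and in part (3) you apply $\Ros$ first and then $\C$, whereas the paper states the $\C$-instance first and then invokes $\Ros$.
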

\begin{proof}
1. Obvious. 

\medskip

2. Suppose $T \vdash \neg\, \varphi$. 
By applying $\D{1}$, we have $S \vdash \PR_T(\gn{\neg\, \varphi})$. 
Since $T \vdash \ConS_T$, we obtain $T \vdash \neg\, \PR_T(\gn{\varphi})$. 

\medskip

3. From $\C$, we have $S \vdash \PR_T(\gn{\varphi}) \land \PR_T(\gn{\neg\, \varphi}) \to \PR_T(\gn{\varphi \land \neg\, \varphi})$. 
Since $T \vdash \neg\, (\varphi \land \neg \, \varphi)$, we have $T \vdash \neg \, \PR_T(\gn{\varphi \land \neg\, \varphi})$. 
It follows that $T \vdash \PR_T(\gn{\varphi}) \to \neg\, \PR_T(\gn{\neg\, \varphi})$. 

\medskip

4. Immediate from $T \vdash \neg \, 0=1$. 

\medskip

5. Suppose $T \vdash \neg\, \varphi$. 
Since $T \vdash \varphi \leftrightarrow 0 = 1$, by applying $\E$, we have $S \vdash \PR_T(\gn{\varphi}) \leftrightarrow \PR_T(\gn{0=1})$. 
Since $T \vdash \neg \, \PR_T(\gn{0=1})$, we conclude $T \vdash \neg \, \PR_T(\gn{\varphi})$. 
\end{proof}

In what follows, we refine several results previously obtained. 
Although the proofs are essentially the same as those presented in \cite{Kura20}, we provide them here again for clarity and completeness.
The following proposition is a refinement of Fact \ref{PC_SC}.2. 

\begin{prop}
    $\E$ \& $\SC^- \Rightarrow \SC$. 
\end{prop}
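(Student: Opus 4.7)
The plan is to mimic the standard proof that $\M$ \& $\SC^- \Rightarrow \SC$ (Fact \ref{PC_SC}.2), observing that the full rule $\M$ is not needed; only the equivalence-preserving rule $\E$ is. The key point is that the implication $\bigwedge T_0 \to \varphi$ supplied by $\SC^-$ is logically equivalent, over $T$, to $\varphi$ itself, because $T_0$ is a subtheory of $T$.

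More precisely, fix a $\Sigma_1$ sentence $\varphi$. By $\SC^-$, there is a finite $T_0 \subseteq T$ such that
\[
S \vdash \varphi \to \PR_T\bigl(\gn{\textstyle\bigwedge T_0 \to \varphi}\bigr).
\]
Since $T \vdash \bigwedge T_0$, we have $T \vdash (\bigwedge T_0 \to \varphi) \leftrightarrow \varphi$. Applying $\E$ to this equivalence yields
\[
S \vdash \PR_T\bigl(\gn{\textstyle\bigwedge T_0 \to \varphi}\bigr) \leftrightarrow \PR_T(\gn{\varphi}).
\]
Chaining the two displayed facts gives $S \vdash \varphi \to \PR_T(\gn{\varphi})$, which is exactly $\SC$.

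I do not expect any serious obstacle here; the argument is a one-line replacement of $\M$ by $\E$ in the existing proof. The only point worth emphasizing is that $\E$ is genuinely sufficient because $\SC^-$ already delivers an implication of the \emph{exact} syntactic shape $\bigwedge T_0 \to \varphi$, so we only need to convert a provable equivalence inside $\PR_T$, not a provable implication; this is precisely what $\E$ does and why $\M$ is overkill for this step.
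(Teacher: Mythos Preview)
Your proof is correct and essentially identical to the paper's own argument: fix a $\Sigma_1$ sentence, use $\SC^-$ to get $S \vdash \varphi \to \PR_T(\gn{\bigwedge T_0 \to \varphi})$, then apply $\E$ to the $T$-provable equivalence $(\bigwedge T_0 \to \varphi) \leftrightarrow \varphi$ to conclude $S \vdash \varphi \to \PR_T(\gn{\varphi})$. Your closing remark about why $\E$ suffices in place of $\M$ is exactly the point the paper is making.
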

\begin{proof}
Let $\sigma$ be any $\Sigma_1$ sentence. 
By applying $\SC^-$, we find a finite subtheory $T_0$ of $T$ such that $S \vdash \sigma \to \PR_T(\gn{\bigwedge T_0 \to \sigma})$. 
Since $T \vdash \left(\bigwedge T_0 \to \sigma \right) \leftrightarrow \sigma$, by applying $\E$, we obtain $S \vdash \PR_T(\gn{\bigwedge T_0 \to \sigma}) \leftrightarrow \PR_T(\gn{\sigma})$. 
We conclude $S \vdash \sigma \to \PR_T(\gn{\sigma})$. 
\end{proof}

We refine Fact \ref{previous_G2}.

\begin{thm}\label{G2_refine_E}\leavevmode
\begin{enumerate}
    \item  $\E$ \& $\D{3} \Rightarrow T \nvdash \ConS_T$. 
    \item  $\SC^- \Rightarrow T \nvdash \ConS_T$. 
\end{enumerate}
\end{thm}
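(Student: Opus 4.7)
My plan for both parts is to assume $T \vdash \ConS_T$ for contradiction, exhibit a diagonal sentence, apply the appropriate instance of $\ConS_T$, and derive inconsistency using the available derivability conditions together with $\D{1}$ and the fixed-point equivalence. The two parts parallel the proofs of the corresponding assertions for $\ConH_T$ in Fact \ref{previous_G2}, the observation being that those proofs invoke only a single instance of $\ConH_T$, which is already supplied by the schema $\ConS_T$.

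For part 1, I would take the standard G\"odel fixed point $\gamma$ satisfying $T \vdash \gamma \leftrightarrow \neg\, \PR_T(\gn{\gamma})$, so that $T \vdash \neg\, \gamma \leftrightarrow \PR_T(\gn{\gamma})$. First, I would apply $\D{3}$ to obtain $S \vdash \PR_T(\gn{\gamma}) \to \PR_T(\gn{\PR_T(\gn{\gamma})})$. Next, I would apply $\E$ to the fixed-point equivalence $\neg\, \gamma \leftrightarrow \PR_T(\gn{\gamma})$ to obtain $S \vdash \PR_T(\gn{\neg\, \gamma}) \leftrightarrow \PR_T(\gn{\PR_T(\gn{\gamma})})$. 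Composing these gives $S \vdash \PR_T(\gn{\gamma}) \to \PR_T(\gn{\neg\, \gamma})$. The instance $T \vdash \PR_T(\gn{\gamma}) \to \neg\, \PR_T(\gn{\neg\, \gamma})$ of $\ConS_T$ then forces $T \vdash \neg\, \PR_T(\gn{\gamma})$; the fixed-point equivalence yields $T \vdash \gamma$; and $\D{1}$ finally produces $T \vdash \PR_T(\gn{\gamma})$, the desired contradiction.

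For part 2, I would transcribe the Kreisel--Takeuti/Jeroslow-style argument of \cite{Kura20} used there to prove $\SC^- \Rightarrow T \nvdash \ConH_T$. Concretely, I would fix a finite $T_0 \subseteq T$ witnessing $\SC^-$ and, setting $\tau := \bigwedge T_0$, take a diagonal sentence adapted to $\tau$ so that the $\Sigma_1$ sentence to which $\SC^-$ is applied matches the corresponding $\ConS_T$-instance syntactically, up to $\D{1}$ and trivial propositional rewriting. Combining the resulting implication with that $\ConS_T$-instance, together with $\D{1}$ and the fixed-point equivalence, will yield the contradiction as before.

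The main obstacle I expect is in part 2: because neither $\E$ nor $\M$ nor $\D{2}$ is assumed, $T$-provable equivalences cannot be transported across $\PR_T$, so the fixed-point sentence and the $\ConS_T$-instance must be engineered so that the $\SC^-$-conclusion meets the $\ConS_T$-conclusion at the syntactic level. This is precisely the delicate choice already performed in \cite{Kura20}; the only new observation required here is that the single instance of consistency used there is supplied by the schema $\ConS_T$ just as well as by $\ConH_T$.
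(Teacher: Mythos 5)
Your proposal is correct and takes essentially the same route as the paper: in part 1 the paper uses the same fixed point $\varphi \leftrightarrow \neg\,\PR_T(\gn{\varphi})$ together with $\E$, $\D{3}$, and $\D{1}$, the only difference being that it invokes the $\ConS_T$ instance for the sentence $\PR_T(\gn{\varphi})$ where you invoke the instance for the fixed point itself. In part 2 the paper does exactly what you outline, transcribing the $\SC^-$ argument of \cite{Kura20} with the $\Sigma_1$ fixed point $\sigma \leftrightarrow \PR_T\bigl(\gn{\neg\,\bigl(\bigwedge T_0 \to \sigma\bigr)}\bigr)$ and using only the single $\ConS_T$ instance for $\bigwedge T_0 \to \sigma$, which confirms your observation that one instance of consistency, supplied by the schema, suffices.
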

\begin{proof}
1. Suppose $\PR_T(x)$ satisfies $\E$ and $\D{3}$. 
By the Fixed Point Theorem (cf.~\cite{HP93,Lin03}), we find a sentence $\varphi$ such that $\PA \vdash \varphi \leftrightarrow \neg\, \PR_T(\gn{\varphi})$. 
By applying $\E$, we have $S \vdash \PR_T(\gn{\varphi}) \leftrightarrow \PR_T(\gn{\neg\, \PR_T(\gn{\varphi})})$. 
From $\D{3}$, we have $S \vdash \PR_T(\gn{\varphi}) \to \PR_T(\gn{\PR_T(\gn{\varphi})})$. 
Suppose, towards a contradiction, that $T \vdash \ConS_T$. 
Then, $T \vdash \neg\, \PR_T(\gn{\varphi})$. 
By the fixed point equivalence, $T \vdash \varphi$. 
By $\D{1}$, we have $S \vdash \PR_T(\gn{\varphi})$. 
This contradicts the consistency of $T$. 

\medskip

2. Let $T_0$ be a finite subtheory of $T$ witnessing $\SC^-$. 
By the Fixed Point Theorem, we find a $\Sigma_1$ sentence $\sigma$ such that $\PA \vdash \sigma \leftrightarrow \PR_T\bigl(\gn{\neg\, \bigl(\bigwedge T_0 \to \sigma \bigr)}\bigr)$. 
It follows from $\SC^-$ that $S \vdash \sigma \to \PR_T\bigl(\gn{\bigwedge T_0 \to \sigma}\bigr)$. 
Suppose, towards a contradiction, that $T \vdash \ConS_T$. 
Then, $T \vdash \neg\, \sigma$ and also $T \vdash \neg\, \bigl(\bigwedge T_0 \to \sigma \bigr)$. 
By $\D{1}$, we have $S \vdash \PR_T\bigl(\gn{\neg\, \bigl(\bigwedge T_0 \to \sigma \bigr)}\bigr)$. 
By the fixed point equivalence, $S \vdash \sigma$. 
This contradicts the consistency of $T$. 
\end{proof}

The following corollary follows from the combination of Proposition \ref{EC} and Theorem \ref{G2_refine_E}. 

\begin{cor}\label{G2_refine_EC}\leavevmode
\begin{enumerate}
    \item  $\E$ \& $\C$ \& $\D{3} \Rightarrow T \nvdash \ConL_T$. 
    \item  $\C$ \& $\SC^- \Rightarrow$ \textup{non}-$\Ros$. 
\end{enumerate}
\end{cor}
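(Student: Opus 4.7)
The plan is to derive both clauses by chaining the earlier results, arguing by contradiction in each case. The corollary is advertised as a direct consequence of Proposition \ref{EC} and Theorem \ref{G2_refine_E}, so the task is simply to identify which clauses to combine and in what order.

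For clause (1), I would assume that $\PR_T(x)$ satisfies $\E$, $\C$, and $\D{3}$, and suppose toward a contradiction that $T \vdash \ConL_T$. First, applying Proposition \ref{EC}.5 to $T \vdash \ConL_T$ together with $\E$ yields that $\Ros$ holds for $\PR_T(x)$. Next, applying Proposition \ref{EC}.3 to $\Ros$ together with $\C$ upgrades this to $T \vdash \ConS_T$. But Theorem \ref{G2_refine_E}.1 asserts that $\E$ and $\D{3}$ together force $T \nvdash \ConS_T$, which is the desired contradiction.

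For clause (2), I would assume that $\PR_T(x)$ satisfies $\C$ and $\SC^-$, and suppose toward a contradiction that $\Ros$ holds. Applying Proposition \ref{EC}.3 to $\Ros$ and $\C$ gives $T \vdash \ConS_T$, which directly contradicts Theorem \ref{G2_refine_E}.2 (namely that $\SC^-$ alone implies $T \nvdash \ConS_T$).

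There is no real obstacle here: the corollary is essentially a bookkeeping exercise in combining implications already established. The only thing to be careful about is to invoke the correct clauses of Proposition \ref{EC}, in particular to use clause 5 (not clause 4) in the first part so that the hypothesis $\E$ is actually used, and to notice that in the second part $\E$ is not needed at all because we go from $\Ros$ to $T \vdash \ConS_T$ via $\C$ without routing through $\ConL_T$.
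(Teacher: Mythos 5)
Your proof is correct and follows exactly the route the paper intends: the paper gives no explicit argument beyond saying the corollary follows from combining Proposition \ref{EC} and Theorem \ref{G2_refine_E}, and your chaining (clause 5 then clause 3 of Proposition \ref{EC} against Theorem \ref{G2_refine_E}.1 for part 1; clause 3 against Theorem \ref{G2_refine_E}.2 for part 2) is precisely that combination.
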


In particular, Corollary \ref{G2_refine_EC}.1 is a refinement of the well-known formulation of G2 presented in Fact \ref{previous_G2}.1, and is of particular interest.
Furthermore, we will prove that $\{\E, \C, \D{3}\}$ is strictly weaker than $\{\D{2}, \D{3}\}$ (Theorem \ref{ex1}). 

From Corollary \ref{G2_refine_EC}.2, we have that ``$\C$ \& $\SC \Rightarrow \PR_T(x)$ does not satisfy $\Ros$''. 
Interestingly, this implication is refined by proving the following new version of G2 concerning the newly introduced principles $\C$, $\D{3}^n_m$, and consistency rule $\Ros$. 
For each provability predicate $\PR_T(x)$, we recursively define $\PR_T^n(\gn{\varphi})$ as follows: $\PR_T^0(\gn{\varphi})$ is $\varphi$; $\PR_T^{n+1}(\gn{\varphi})$ is $\PR_T(\gn{\PR_T^n(\gn{\varphi})})$. 

\begin{defn}
Let $m, n$ be natural numbers. 
\begin{description} 
	\item [$\D{3}^n_m$] $S \vdash \PR_T^n(\gn{\varphi}) \to \PR_T^m(\gn{\varphi})$ for any formula $\varphi$. 
\end{description}
\end{defn}

The usual $\D{3}$ is $\D{3}^1_2$. 
The condition $\D{3}^n_m$ corresponds to the modal axiom $\Box^n A \to \Box^m A$ which is studied in \cite{Kogu,KS} in the context of non-normal modal logic.

\begin{thm}\label{G2_C3}
Let $n, k \geq 1$. 
Then, $\C$ \& $\D{3}^n_{n+k} \Rightarrow$ \textup{non}-$\Ros$. 
\end{thm}
\begin{proof}
Suppose, towards a contradiction, that $\PR_T(x)$ satisfies $\C$, $\D{3}^n_{n+k}$, and $\Ros$. 
By the Fixed Point Theorem, we find a sentence $\varphi$ satisfying
\begin{equation}\label{eq15}
    \PA \vdash \varphi \leftrightarrow \PR_T (\gn{\neg \, \xi}),  
\end{equation}
where $\xi$ is the sentence
\[
    \bigwedge_{0 \leq i \leq n+k-1} \PR_T^i(\gn{\varphi}).
\]
From $\D{3}^n_{n+k}$ and (\ref{eq15}), we get
\begin{align*}
    S \vdash \xi & \to \varphi \land \bigwedge_{1 \leq i \leq n+k-1} \PR_T^i(\gn{\varphi})\\
    & \to \PR_T (\gn{\neg \, \xi}) \land \bigwedge_{1 \leq i \leq n+k} \PR_T^i(\gn{\varphi}). 
\end{align*}
By the $n+k$ times applications of $\C$, we have
\[
    S \vdash \xi \to \PR_T\left(\gn{\neg\, \xi \land \xi} \right). 
\]
Since $T \vdash \neg\, (\neg \, 
\xi \land \xi)$, by applying $\Ros$, we obtain $T \vdash \neg\, \PR_T(\gn{\neg \, \xi \land \xi})$. 
Thus, $T \vdash \neg\, \xi$, and hence $S \vdash \PR_T(\gn{\neg\, \xi})$. 
By (\ref{eq15}), $S \vdash \varphi$. 
By applying $\D{1}$, we have $\displaystyle S \vdash \bigwedge_{0 \leq i \leq n+k-1} \PR_T^i(\gn{\varphi})$. 
So, $T \vdash \xi$. 
This contradicts the consistency of $T$. 
\end{proof}

The following corollary is a refinement of Corollary \ref{G2_refine_EC}, which follows from the combination of Proposition \ref{EC}.5 and Theorem \ref{G2_C3}.

\begin{cor}\label{G2_EC3}
Let $n, k \geq 1$.
Then, $\E$ \& $\C$ \& $\D{3}^n_{n+k} \Rightarrow T \nvdash \ConL_T$. 
\end{cor}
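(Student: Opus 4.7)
The plan is to derive this corollary as a direct instance of Theorem \ref{weakLob} applied to the formula $\varphi := 0 = 1$. This is the standard route from L\"ob's theorem to G\"odel's second incompleteness theorem for $\ConL_T$, and there is essentially no new work to do beyond what Theorem \ref{weakLob} already gives us.

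First, I would fix $n, k \geq 1$ and assume, for the sake of contradiction, that $\PR_T(x)$ satisfies $\D{2}$ and $\D{3}^n_{n+k}$ while $T \vdash \ConL_T$, i.e., $T \vdash \neg \, \PR_T(\gn{0=1})$. From this latter provability, propositional logic immediately yields
\[
    T \vdash \PR_T(\gn{0=1}) \to 0=1,
\]
since the antecedent is refutable in $T$. Hence the hypothesis of Theorem \ref{weakLob} is satisfied with $\varphi := 0=1$.

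Next, I would invoke Theorem \ref{weakLob} with this choice of $\varphi$ to conclude $T \vdash 0=1$. This contradicts the standing assumption that $T$ is consistent, and therefore $T \nvdash \ConL_T$.

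There is no real obstacle here: the derivation of G2 for $\ConL_T$ from L\"ob's theorem is a routine one-line argument, and Theorem \ref{weakLob} has already done the substantive work of weakening $\D{3}$ to $\D{3}^n_{n+k}$. The only point worth emphasizing in the write-up is that the argument uniformly covers all $n, k \geq 1$ because Theorem \ref{weakLob} does so, so no case analysis on $n$ or $k$ is needed.
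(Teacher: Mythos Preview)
Your proposal is correct and matches the paper's intended argument: the corollary is stated in the paper without proof, immediately after Theorem \ref{weakLob}, precisely because it follows by the one-line application you describe (take $\varphi := 0=1$ and use $T \vdash \neg\,\PR_T(\gn{0=1})$ to obtain the hypothesis of Theorem \ref{weakLob}, then contradict consistency).
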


\subsection{L\"ob's theorem}

In usual, L\"ob's theorem holds for provability predicates satisfying $\D{2}$ and $\D{3}$. 
Then G2 for $\ConL_T$ is a direct consequence of L\"ob's theorem. 
In Corollary \ref{G2_EC3}, we showed that a weaker $\{\E, \C, \D{3}^n_{n+k}\}$ with $n, k \geq 1$ is sufficient for G2 for $\ConL_T$. 
We show that the set $\{\E, \C, \D{3}^n_{n+k}\}$ is also sufficient for the following weak variant of L\"ob's theorem. 

\begin{thm}\label{Lob1}
Let $n, k \geq 1$. 
Suppose that $\PR_T(x)$ satisfies $\E$, $\C$, and $\D{3}^n_{n+k}$. 
Let $\varphi$ be any formula. 
If $T \vdash \PR_T(\gn{\varphi \land \bigwedge_{1 \leq i \leq n+k-1}\PR_T^i(\gn{\chi})}) \to \varphi$ holds for all sentences $\chi$, then $T \vdash \varphi$. 
\end{thm}
\begin{proof}
Suppose that $\PR_T(x)$ satisfies $\E$, $\C$, and $\D{3}^n_{n+k}$. 
Assume that $T \vdash \PR_T(\gn{\varphi \land \bigwedge_{1 \leq i \leq n+k-1}\PR_T^i(\gn{\chi})}) \to \varphi$ holds for all sentences $\chi$. 
By the Fixed Point Theorem, we find a sentence $\psi$ satisfying the following equivalence: 
\[
    \PA \vdash \psi \leftrightarrow (\xi \to \varphi),  
\]
where $\xi$ is the sentence
\[
    \displaystyle \bigwedge_{1 \leq i \leq n+k-1} \PR_T^i(\gn{\psi}).
\]
Since $T \vdash (\psi \land \xi) \leftrightarrow (\varphi \land \xi)$, by applying $\E$, we have
\begin{equation}\label{eq20}
    S \vdash \PR_T(\gn{\psi \land \xi}) \leftrightarrow \PR_T(\gn{\varphi \land \xi}). 
\end{equation}

From $\D{3}^n_{n+k}$, 
\[
    S \vdash \xi \to \bigwedge_{1 \leq i \leq n+k} \PR_T^i(\gn{\psi}). 
\]
By the $n + k - 1$ times applications of $\C$, we get 
\[
    S \vdash \xi \to \PR_T\left(\gn{\bigwedge_{0 \leq i \leq n+k-1} \PR_T^i(\gn{\psi})} \right), 
\]
and equivalently, $S \vdash \xi \to \PR_T(\gn{\psi \land \xi})$. 
It follows from (\ref{eq20}) that $S \vdash \xi \to \PR_T(\gn{\varphi \land \xi})$. 
By the assumption, $T \vdash \PR_T(\gn{\varphi \land \xi}) \to \varphi$, and thus $T \vdash \xi \to \varphi$. 
By the fixed point equivalence, $T \vdash \psi$. 
So, $S \vdash \bigwedge_{1 \leq i \leq n+k-1} \PR_T^i(\gn{\psi})$, that is, $S \vdash \xi$. 
Therefore, we conclude $T \vdash \varphi$. 
\end{proof}

Since $\PA \vdash 0=1 \leftrightarrow (0=1 \land \bigwedge_{1 \leq i \leq n+k-1}\PR_T^i(\gn{\chi}))$ for all $\chi$, Corollary \ref{G2_EC3} is in fact a consequence of Theorem \ref{Lob1}. 
The following corollary is a refinement of the usual L\"ob's theorem. 

\begin{cor}\label{Lob2}
Let $n, k \geq 1$ and let $\varphi$ be any formula. 
Suppose that $\PR_T(x)$ satisfies $\D{2}$ and $\D{3}^n_{n+k}$. 
If $T \vdash \PR_T(\gn{\varphi}) \to \varphi$, then $T \vdash \varphi$. 
\end{cor}
\begin{proof}
Suppose that $\PR_T(x)$ satisfies $\D{2}$ and $\D{3}^n_{n+k}$. 
Assume $T \vdash \PR_T(\gn{\varphi}) \to \varphi$. 
Let $\chi$ be any sentence. 
Since $T \vdash \varphi \land \bigwedge_{1 \leq i \leq n+k-1} \PR_T^i(\gn{\chi}) \to \varphi$, we have $S \vdash \PR_T(\gn{\varphi \land \bigwedge_{1 \leq i \leq n+k-1} \PR_T^i(\gn{\chi})}) \to \PR_T(\gn{\varphi})$ from $\D{1}$ and $\D{2}$. 
Hence, $T \vdash \PR_T(\gn{\varphi \land \bigwedge_{1 \leq i \leq n+k-1} \PR_T^i(\gn{\chi})}) \to \varphi$. 
Since $\PR_T(x)$ satisfies $\E$ and $\C$, it follows from Theorem \ref{Lob1} that $T \vdash \varphi$. 
\end{proof}

The theorem shows that L\"ob's theorem still holds even when a weakened variant of $\D{3}$ is used. 
However, this does not mean that the formalized version of L\"ob's theorem $S \vdash \PR_T(\gn{\PR_T(\gn{\varphi}) \to \varphi}) \to \PR_T(\gn{\varphi})$ generally holds when $\D{3}$ is weakened.
Kurahashi \cite{Kura18} proved that for each $n \geq 1$, for a provability predicate satisfying $\D{2}$, the condition $\D{3}^1_{n+1}$ is equivalent to the provability of a weak form of the formalized L\"ob’s theorem, namely $\PR_T(\gn{\PR_T^n(\gn{\varphi}) \to \varphi}) \to \PR_T(\gn{\varphi})$. 
Moreover, it was proved that there exists a $\Sigma_2$ provability predicate whose provability logic is exactly Sacchetti's logic $\mathsf{K}+ \Box(\Box^n A \to A) \to \Box A$ \cite{Sacc01}. 

\begin{prob}
For $n, k \geq 1$, can one specify a version of the formalized L\"ob’s theorem corresponding to $\D{3}^n_{n+k}$, and determine the associated provability logic?
Moreover, can the provability predicate corresponding to that provability logic be found as a $\Sigma_1$ formula?
\end{prob}

Relating to this problem, Kogure \cite{Kogu} proved that for each $n, k \geq 1$, there exists a $\Sigma_1$ provability predicate whose provability logic is exactly $\mathsf{N} + (\Box^n A\to \Box^k A)$.

\section{Uniform derivability conditions}

In this subsection, we discuss uniform derivability conditions. 
This section consists of two main results. 
The first one is to show that the condition $\M$ in Hilbert and Bernays’ version of G2 is redundant. 
The second one is to refine the implication $\MU \Rightarrow \SCU$ of Fact \ref{uniform}.4. 
We begin by listing the uniform derivability conditions introduced in \cite{Kura20}.

\begin{defn}[Uniform derivability conditions]\leavevmode
\begin{description}
	\item [$\DU{1}$] If $T \vdash \forall \vec{x}\,\varphi(\vec{x})$, then $S \vdash \forall \vec{x}\, \PR_T(\gn{\varphi(\vec{\dot{x}})})$ for any formula $\varphi(\vec{x})$. 

	\item [$\DU{2}$] $S \vdash \forall \vec{x}\, \bigl(\PR_T(\gn{\varphi(\vec{\dot{x}}) \to \psi(\vec{\dot{x}})}) \to (\PR_T(\gn{\varphi(\vec{\dot{x}})}) \to \PR_T(\gn{\psi(\vec{\dot{x}})}))\bigr)$ for any formulas $\varphi(\vec{x})$ and $\psi(\vec{x})$. 

	\item [$\DU{3}$] $S \vdash \forall \vec{x}\, (\PR_T(\gn{\varphi(\vec{\dot{x}})}) \to \PR_T(\gn{ \PR_T(\gn{\varphi(\vec{\dot{x}})})}))$ for any formula $\varphi(\vec{x})$. 

	\item [$\GCU$] $S \vdash \forall \vec{x}\, (\varphi(\vec{x}) \to \PR_T(\gn{\varphi(\vec{\dot{x}})}))$ for any $\Gamma$ formula $\varphi(\vec{x})$. 

	\item [$\MU$] If $T \vdash \forall \vec{x} (\varphi(\vec{x}) \to \psi(\vec{x}))$, then $S \vdash \forall \vec{x} \bigl(\PR_T(\gn{\varphi(\vec{\dot{x}})}) \to \PR_T(\gn{\psi(\vec{\dot{x}})}) \bigl)$\\
for any formulas $\varphi(\vec{x})$ and $\psi(\vec{x})$. 

	\item [$\CB$] $S \vdash \PR_T(\gn{\forall \vec{x}\, \varphi(\vec{x})}) \to \forall \vec{x}\, \PR_T(\gn{\varphi(\vec{\dot{x}})})$ for any formula $\varphi(\vec{x})$. 

	\item [$\PCU$] $S \vdash \forall \vec{x} \bigl(\PR_\emptyset(\gn{\varphi(\vec{\dot{x}})}) \to \PR_T(\gn{\varphi(\vec{\dot{x}})}) \bigr)$ for any formula $\varphi(\vec{x})$. 
\end{description}
\end{defn}

Here $\gn{\varphi(\dot{x})}$ is a term corresponding to a primitive recursive function calculating the G\"odel number of the formula $\varphi(\overline{n})$ from $n$. 
CB stands for ``Converse Barcan''. 
The implications $\SCU \Rightarrow \DU{3}$ and $\DU{1}$ \& $\DU{2} \Rightarrow \MU$ are obvious. 
The following fact presents more implications.

\begin{fact}\label{uniform}\leavevmode
\begin{enumerate}
    \item \textup{(\cite[Proposition 2.14.1]{Kura20})} $\CB \Rightarrow \DU{1}$. 
    \item \textup{(\cite[Proposition 2.14.2]{Kura20})} $\MU \Rightarrow \CB$. 
    \item \textup{(\cite[Proposition 2.14.3]{Kura20})} $\DU{2}$ \& $\PCU \Rightarrow \DU{1}$. 
    \item \textup{(\cite[Theorem 2.20]{Kura20})} $\MU \Rightarrow \SCU$. 
    \item \textup{(\cite[Corollary 2.23]{Kura20})} $\MU \Rightarrow \PCU$. 
\end{enumerate}
\end{fact}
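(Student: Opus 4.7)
The final statement collects five implications between the uniform derivability conditions; items 1--3 are routine manipulations of the definitions, while items 4 and 5 form the substantive core. For item 1 ($\CB \Rightarrow \DU{1}$), from $T \vdash \forall \vec{x}\,\varphi(\vec{x})$ I obtain $S \vdash \PR_T(\gn{\forall \vec{x}\,\varphi(\vec{x})})$ by $\D{1}$, and then apply $\CB$. For item 2 ($\MU \Rightarrow \CB$), the pure-logic tautology $\forall \vec{y}\,\varphi(\vec{y}) \to \varphi(\vec{x})$ is $T$-provable uniformly in $\vec{x}$, so $\MU$ gives $S \vdash \forall \vec{x}(\PR_T(\gn{\forall \vec{y}\,\varphi(\vec{y})}) \to \PR_T(\gn{\varphi(\vec{\dot{x}})}))$, which collapses to $\CB$. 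For item 3 ($\DU{2}$ \& $\PCU \Rightarrow \DU{1}$), assuming $T \vdash \forall \vec{x}\,\varphi(\vec{x})$, $\D{1}$ supplies $S \vdash \PR_T(\gn{\forall \vec{x}\,\varphi(\vec{x})})$, formalizing the universal instantiation rule gives $S \vdash \forall \vec{x}\,\PR_\emptyset(\gn{\forall \vec{y}\,\varphi(\vec{y}) \to \varphi(\vec{\dot{x}})})$, $\PCU$ transfers this inside $\PR_T$, and $\DU{2}$ then splits the implication to yield $\DU{1}$.

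Item 4 ($\MU \Rightarrow \SCU$) is where the main work lies. The plan is to prove uniform $\Sigma_1$-completeness, $S \vdash \forall \vec{x}(\varphi(\vec{x}) \to \PR_T(\gn{\varphi(\vec{\dot{x}})}))$, by induction on the construction of a $\Sigma_1$ formula $\varphi(\vec{x})$. The atomic and negated-atomic $\Delta_0$ cases rely on $\PA$ deciding each such formula together with $\D{1}$ (which $\MU$ entails via items 1 and 2) internalizing this uniformly under $\PR_T$. The inductive steps for $\land$, $\lor$, and bounded quantifiers chain applications of $\MU$ to the tautological decomposition implications, composing the inductive hypotheses for the immediate subformulas. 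For the outer $\exists$ step, the implication $\psi(\vec{x},\vec{y}) \to \exists \vec{z}\,\psi(\vec{x},\vec{z})$ is $T$-provable uniformly in $(\vec{x},\vec{y})$, so $\MU$ yields $S \vdash \forall \vec{x}\forall \vec{y}(\PR_T(\gn{\psi(\vec{\dot{x}},\vec{\dot{y}})}) \to \PR_T(\gn{\exists \vec{z}\,\psi(\vec{\dot{x}},\vec{z})}))$, which combined with the inductive hypothesis for $\psi$ closes the step. The subtle point is that $\MU$ only transports $T$-provable implications while keeping the ambient parameter $\vec{x}$ open; every auxiliary implication used in the induction must therefore be phrased so that its uniform $T$-provability is available. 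This is the technical crux of the argument, and the main obstacle in the whole fact.

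For item 5 ($\MU \Rightarrow \PCU$), I would bootstrap off item 4. Since $\PR_\emptyset(\gn{\varphi(\vec{\dot{x}})})$ is $\Sigma_1$ in $\vec{x}$, item 4 applied to it gives $S \vdash \forall \vec{x}(\PR_\emptyset(\gn{\varphi(\vec{\dot{x}})}) \to \PR_T(\gn{\PR_\emptyset(\gn{\varphi(\vec{\dot{x}})})}))$. For the fixed $\varphi$, $T$ (extending $\PA$) proves the uniform local reflection principle for pure logic restricted to instances of $\varphi$, namely $T \vdash \forall \vec{x}(\PR_\emptyset(\gn{\varphi(\vec{\dot{x}})}) \to \varphi(\vec{x}))$, via a partial truth predicate adequate to the complexity of $\varphi$. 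Applying $\MU$ to this uniform implication gives $S \vdash \forall \vec{x}(\PR_T(\gn{\PR_\emptyset(\gn{\varphi(\vec{\dot{x}})})}) \to \PR_T(\gn{\varphi(\vec{\dot{x}})}))$, and chaining with the previous implication produces $\PCU$.
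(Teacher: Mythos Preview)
Items 1--3 and 5 are fine; your derivation of $\PCU$ from item 4 via $\SCU$ and reflection for pure logic is essentially how \cite[Corollary~2.23]{Kura20} is obtained.

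The gap is in item 4. Your structural induction breaks at the $\land$ step (and hence at bounded $\forall$): from the inductive hypotheses you obtain $\PR_T(\gn{\varphi(\vec{\dot{x}})})$ and $\PR_T(\gn{\psi(\vec{\dot{x}})})$ separately, but combining them into $\PR_T(\gn{\varphi(\vec{\dot{x}}) \land \psi(\vec{\dot{x}})})$ is precisely an instance of $\C$, which $\MU$ does not obviously supply and which you have not derived. The ``tautological decomposition implications'' for $\land$ go the wrong way under $\MU$. Your induction therefore only recovers Buchholz's $\DU{1}$ \& $\DU{2} \Rightarrow \SCU$; the whole point of \cite[Theorem~2.20]{Kura20} is that $\DU{2}$ (equivalently $\C$, given $\M$) can be dropped.

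The paper's route, realized here in Lemmas~\ref{Lem1}, \ref{Lem2} and the proof of Theorem~\ref{EU_to_SCU} (see the remark following it), avoids $\C$ entirely. One first uses the MRDP theorem (Corollary~\ref{Cor_MRDP}) to rewrite $\sigma(\vec{x})$ as $\exists \vec{y}\, \bigvee_i \bigwedge_j (z_{i,j} = t_{i,j}(\vec{x},\vec{y}))$ with simple terms $t_{i,j}$. The conjunction is then obtained without $\C$: since $T \vdash \bigwedge_j (z_{i_0,j} = z_{i_0,j}) \lor \varphi_{i_0}$ trivially, $\DU{1}$ places the entire disjunction under $\PR_T$ at once, after which the meta-level equality axiom on the G\"odel-numbering term replaces one occurrence of each $z_{i_0,j}$ by a fresh variable, and $\EU$ collapses the result to the target form. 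The simple terms are then handled by a separate numeric induction inside $S$ (Lemma~\ref{Lem2}). So the technical crux is not merely ``keeping parameters open'' under $\MU$, as you put it, but circumventing $\C$ altogether via this normal form and the equality-axiom substitution trick.
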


Fact \ref{uniform}.4 is a refinement of Buchholz’s observation ``$\DU{1}$ \& $\DU{2} \Rightarrow \SCU$'' \cite{Buc93} (see also Rautenberg’s textbook \cite{Rau10}).

We introduce two new principles: $\EU$ and $\CBE$. 
Furthermore, we introduce stronger variants $\CBP$ and $\CBEP$ of $\CB$ and $\CBE$, respectively. 

\begin{defn}\leavevmode
\begin{description}
	\item [$\EU$] If $T \vdash \forall \vec{x} (\varphi(\vec{x}) \leftrightarrow \psi(\vec{x}))$, then $S \vdash \forall \vec{x} \bigl(\PR_T(\gn{\varphi(\vec{\dot{x}})}) \leftrightarrow \PR_T(\gn{\psi(\vec{\dot{x}})}) \bigl)$\\
for any formulas $\varphi(\vec{x})$ and $\psi(\vec{x})$. 

	\item [$\CBE$] $S \vdash \exists \vec{x}\, \PR_T(\gn{\varphi(\vec{\dot{x}})}) \to \PR_T(\gn{\exists \vec{x}\, \varphi(\vec{x})})$ for any formula $\varphi(\vec{x})$. 

	\item [$\CBP$] $S \vdash \PR_T(\gn{\forall \vec{x}\, \varphi(\vec{x}, \vec{\dot{y}})}) \to \forall \vec{x}\, \PR_T(\gn{\varphi(\vec{\dot{x}}, \vec{\dot{y}})})$ for any formula $\varphi(\vec{x}, \vec{y})$. 

	\item [$\CBEP$] $S \vdash \exists \vec{x}\, \PR_T(\gn{\varphi(\vec{\dot{x}}, \vec{\dot{y}})}) \to \PR_T(\gn{\exists \vec{x}\, \varphi(\vec{x}, \vec{\dot{y}})})$ for any formula $\varphi(\vec{x}, \vec{y})$. 
\end{description}
\end{defn}

\begin{prop}\label{EU_to_D1U}
\begin{enumerate}
    \item $\EU \Rightarrow \DU{1}$. 
    \item $\EU$ \& $\PCU \Rightarrow \SCU$. 
    \item $\CBE$ \& $\DCU \Rightarrow \SC$. 
    \item $\CBEP$ \& $\DCU \Rightarrow \SC^\mathrm{U}$. 
\end{enumerate}
\end{prop}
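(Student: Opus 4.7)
The plan is that all four parts are uniform analogues of results already established locally in the paper, so each proof should be a routine lifting of the local argument. The only step that requires any real thought is a uniform version of Fact \ref{PC_SC}.1 needed for part 2; the rest are direct substitutions.

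For part 1, given $T \vdash \forall \vec{x}\, \varphi(\vec{x})$, I would observe that $T \vdash \forall \vec{x}\, (\varphi(\vec{x}) \leftrightarrow 0=0)$ and apply $\EU$ to get $S \vdash \forall \vec{x}\, (\PR_T(\gn{\varphi(\vec{\dot{x}})}) \leftrightarrow \PR_T(\gn{0=0}))$. Since $\D{1}$ holds for every provability predicate and $T \vdash 0=0$, we have $S \vdash \PR_T(\gn{0=0})$, whence $S \vdash \forall \vec{x}\, \PR_T(\gn{\varphi(\vec{\dot{x}})})$.

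For part 2, I would first establish the uniform analogue of Fact \ref{PC_SC}.1: from $\PCU$ extract a finite $T_0 \subseteq T$ such that, for every $\Sigma_1$ formula $\varphi(\vec{x})$,
\[ S \vdash \forall \vec{x}\, \bigl(\varphi(\vec{x}) \to \PR_T(\gn{\bigwedge T_0 \to \varphi(\vec{\dot{x}})})\bigr). \]
This follows by combining uniform formalized $\Sigma_1$-completeness of pure logic with $T_0$ supplying the needed arithmetic context (which gives the same implication with $\PR_\emptyset$ in place of $\PR_T$) and then transferring through $\PCU$. Since $T \supseteq T_0$, we have $T \vdash \forall \vec{x}\, ((\bigwedge T_0 \to \varphi(\vec{x})) \leftrightarrow \varphi(\vec{x}))$, so $\EU$ identifies the two provability predicates and yields $\SCU$.

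For parts 3 and 4, I would put a $\Sigma_1$ formula into prenex form $\exists \vec{x}\, \delta(\vec{x}, \vec{y})$ with $\delta \in \Delta_0$ (with $\vec{y}$ absent in part 3) and reason inside $S$. Fixing $\vec{y}$ and assuming $\exists \vec{x}\, \delta(\vec{x}, \vec{y})$, picking a witness $\vec{x}$ and applying $\DCU$ to $\delta$ gives $\PR_T(\gn{\delta(\vec{\dot{x}}, \vec{\dot{y}})})$, hence $\exists \vec{x}\, \PR_T(\gn{\delta(\vec{\dot{x}}, \vec{\dot{y}})})$. Applying $\CBE$ in part 3 or $\CBEP$ in part 4 then pushes the existential inside the provability predicate, yielding $\PR_T(\gn{\exists \vec{x}\, \delta(\vec{x}, \vec{\dot{y}})})$, which is the desired $\Sigma_1$-completeness instance. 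The only step with any subtlety is the uniform $\SC^-$ observation underlying part 2; all remaining work is mechanical.
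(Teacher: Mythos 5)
Your proposal is correct and takes essentially the same route as the paper: part 1 is the identical $0=0$ trick via $\EU$ and $\D{1}$, and part 2 is exactly the paper's argument, which takes the finite theory to be $\Q$, gets $S \vdash \varphi(\vec{x}) \to \PR_\emptyset(\gn{\bigwedge \Q \to \varphi(\vec{\dot{x}})})$ by formalized $\Sigma_1$-completeness plus the formalized deduction theorem, transfers through $\PCU$, and removes the antecedent with $\EU$. For parts 3 and 4 the paper simply states they are clear, and the intended argument is precisely your witness-plus-$\CBE$ (resp. $\CBEP$) computation; note only that no prenexation step is needed (or available without $\E$), since $\Sigma_1$ formulas are already of the form $\exists \vec{x}\, \delta$ with $\delta \in \Delta_0$.
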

\begin{proof}
1. Suppose $\PR_T(x)$ satisfies $\EU$. 
    Assume $T \vdash \varphi(\vec{x})$, then $T \vdash 0=0 \leftrightarrow \varphi(\vec{x})$. 
    By $\EU$, we have $S \vdash \PR_T(\gn{0=0}) \leftrightarrow \PR_T(\gn{\varphi(\vec{\dot{x}})})$.
    Since $S \vdash \PR_T(\gn{0=0})$, we conclude $S \vdash \PR_T(\gn{\varphi(\vec{\dot{x}})})$.

  \medskip

2. Suppose $\PR_T(x)$ satisfies $\EU$ and $\PCU$. 
Let $\varphi(\vec{x})$ be any $\Sigma_1$ formula. 
We have $S \vdash \varphi(\vec{x}) \to \PR_{\Q}(\gn{\varphi(\dot{x})})$ and hence $S \vdash \varphi(\vec{x}) \to \PR_\emptyset(\gn{\bigwedge \Q \to \varphi(\vec{\dot{x}})})$ by the formalized deduction theorem. 
From $\PC$, we get $S \vdash \varphi(\vec{x}) \to \PR_T(\gn{\bigwedge \Q \to \varphi(\vec{\dot{x}})})$. 
Since $T \vdash \bigl(\bigwedge \Q \to \varphi(\vec{x}) \bigr) \leftrightarrow \varphi(\vec{x})$, by applying $\EU$, we obtain $S \vdash \PR_T(\gn{\bigwedge \Q \to \varphi(\vec{\dot{x}})}) \leftrightarrow \PR_T(\gn{\varphi(\vec{\dot{x}})})$. 
We conclude $S \vdash \varphi(\vec{x}) \to \PR_T(\gn{\varphi(\vec{\dot{x}})})$. 

\medskip

3 and 4 are clear, but it is important to distinguish the effects of $\CBE$ and $\CBEP$.
\end{proof}

From the perspective of non-normal modal logic, $\CB$ and $\CBE$ correspond to the axioms $\Box (A \land B) \to \Box A \land \Box B$ and $\Box A \lor \Box B \to \Box (A \lor B)$, respectively. 
Moreover, each one of these axioms is equivalent to \textsc{RM} over the rule \textsc{RE}. 
This observation can in fact be formulated as the following proposition.

\begin{prop}\label{MU_eq}
The following conditions are pairwise equivalent: 
\begin{enumerate}
    \item $\MU$. 
    \item $\EU$ \& \ $\CBP$. 
    \item $\EU$ \& \ $\CBEP$. 
\end{enumerate}
\end{prop}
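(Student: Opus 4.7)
The plan is to prove the equivalences $(1) \Leftrightarrow (2)$ and $(1) \Leftrightarrow (3)$; the remaining equivalence then follows by transitivity. The structure mirrors the non-normal modal fact that, over the rule \textsc{RE}, the rule \textsc{RM} is equivalent either to the axiom $\Box(A \land B) \to \Box A$ or to $\Box A \to \Box(A \lor B)$, with the quantifier principles $\CBP$ and $\CBEP$ playing the role of these two conjunction/disjunction axioms in the uniform arithmetic setting.

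For the forward directions, $\EU$ is immediate from $\MU$ applied to each direction of a provable biconditional. To derive $\CBP$ from $\MU$, apply $\MU$ to the trivial $T$-theorem $\forall \vec{x}, \vec{y}\,(\forall \vec{u}\, \varphi(\vec{u}, \vec{y}) \to \varphi(\vec{x}, \vec{y}))$; the resulting arithmetized implication is exactly $\CBP$ after moving $\forall \vec{x}$ inside (since the antecedent does not depend on $\vec{\dot{x}}$). Analogously, $\CBEP$ follows from $\MU$ applied to $\forall \vec{x}, \vec{y}\,(\varphi(\vec{x}, \vec{y}) \to \exists \vec{u}\, \varphi(\vec{u}, \vec{y}))$, followed by existential quantification on the antecedent.

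For $(2) \Rightarrow (1)$, suppose $T \vdash \forall \vec{x}(\varphi(\vec{x}) \to \psi(\vec{x}))$. The key idea is to encode the conjunction $\varphi(\vec{x}) \land \psi(\vec{x})$ as a universal quantification over an index, so that $\CBP$ becomes applicable. Define $\theta(i, \vec{x}) := (i = \num{0} \to \varphi(\vec{x})) \land (i \neq \num{0} \to \psi(\vec{x}))$, treating $i$ as the bound variable and $\vec{x}$ as parameter. Under the hypothesis one checks that $T \vdash \forall \vec{x}(\forall i\, \theta(i, \vec{x}) \leftrightarrow \varphi(\vec{x}))$, while $T \vdash \forall \vec{x}(\theta(\num{1}, \vec{x}) \leftrightarrow \psi(\vec{x}))$ holds trivially since $\PA$ proves $\num{1} \neq \num{0}$. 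Two applications of $\EU$ to these equivalences, together with $\CBP$ instantiated at $i = \num{1}$, chain to give $S \vdash \forall \vec{x}(\PR_T(\gn{\varphi(\vec{\dot{x}})}) \to \PR_T(\gn{\psi(\vec{\dot{x}})}))$, which is $\MU$.

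The case $(3) \Rightarrow (1)$ is the evident dual: set $\theta(i, \vec{x}) := (i = \num{0} \land \varphi(\vec{x})) \lor (i \neq \num{0} \land \psi(\vec{x}))$, so that under the hypothesis $T \vdash \forall \vec{x}(\exists i\, \theta(i, \vec{x}) \leftrightarrow \psi(\vec{x}))$, and trivially $T \vdash \forall \vec{x}(\theta(\num{0}, \vec{x}) \leftrightarrow \varphi(\vec{x}))$; then apply $\EU$ twice and $\CBEP$ at $i = \num{0}$. The principal technical point throughout is the preservation of uniformity in the free parameter $\vec{x}$, which is precisely why the strengthened parameter-tolerant principles $\CBP$ and $\CBEP$ (as opposed to $\CB$ and $\CBE$) are needed here: they permit free variables to persist inside the Gödel-quoted argument of $\PR_T$, exactly what the indexed formula $\theta(i, \vec{x})$ exploits.
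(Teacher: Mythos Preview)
Your proof is correct and follows essentially the same approach as the paper: encode the relevant conjunction/disjunction of $\varphi$ and $\psi$ as a universal/existential quantifier over an auxiliary index variable, then combine two applications of $\EU$ with one instantiation of $\CBP$ (resp.\ $\CBEP$). The only cosmetic differences are that the paper places $\psi$ at the index $0$ (instantiating $\CBP$ there) and uses $\psi \to \varphi$ and $\neg\,\varphi \land \psi$ in the off-index slots rather than your simpler $\psi$; both choices work for the same reason.
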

\begin{proof}
Note that the implication $\MU \Rightarrow \EU$ obviously holds. 

\medskip

$(1 \Rightarrow 2)$: This is a slight refinement of Fact \ref{uniform}.2. 
Suppose $\PR_T(x)$ satisfies $\MU$. 
Let $\varphi(\vec{x}, \vec{y})$ be any formula. 
    Since $T \vdash \forall \vec{x}\, \varphi(\vec{x}, \vec{y}) \to \varphi(\vec{x}, \vec{y})$, by applying $\MU$, we get $S \vdash \PR_T(\gn{\forall \vec{x}\, \varphi(\vec{x}, \vec{\dot{y}})}) \to \PR_T(\gn{\varphi(\vec{\dot{x}}, \vec{\dot{y}})})$. 
Thus, $S \vdash \PR_T(\gn{\forall \vec{x}\, \varphi(\vec{x}, \vec{\dot{y}})}) \to \forall \vec{x}\, \PR_T(\gn{\varphi(\vec{\dot{x}}, \vec{\dot{y}})})$. 

\medskip

$(2 \Rightarrow 1)$: Suppose $\PR_T(x)$ satisfies $\EU$ and $\CBP$. 
Let $\varphi(\vec{x})$ and $\psi(\vec{x})$ be any formulas such that $T \vdash \varphi(\vec{x}) \to \psi(\vec{x})$. 
Let $\xi(\vec{x}, y)$ denote the formula
\[
    \bigl(y = 0 \to \psi(\vec{x}) \bigr) \land \bigl(y \neq 0 \to (\psi(\vec{x}) \to \varphi(\vec{x})) \bigr).
\]
Then, we have $T \vdash \xi(\vec{x}, 0) \leftrightarrow \psi(\vec{x})$. 
Also, 
\begin{align*}
    T \vdash \forall y\, \xi(\vec{x}, y) & \leftrightarrow \psi(\vec{x}) \land (\psi(\vec{x}) \to \varphi(\vec{x})), \\
    & \leftrightarrow \varphi(\vec{x}). 
\end{align*}
By applying $\EU$, we have $S \vdash \PR_T(\gn{\xi(\vec{\dot{x}}, 0)}) \leftrightarrow \PR_T(\gn{\psi(\vec{\dot{x}})})$ and $S \vdash \PR_T(\gn{\forall y\, \xi(\vec{\dot{x}}, y)}) \leftrightarrow \PR_T(\gn{\varphi(\vec{\dot{x}})})$. 
From $\CBP$, we get $S \vdash \PR_T(\gn{\forall y\, \xi(\vec{\dot{x}}, y)}) \to  \forall y\, \PR_T(\gn{\xi(\vec{\dot{x}}, \dot{y})})$, and hence $S \vdash \PR_T(\gn{\forall y\, \xi(\vec{\dot{x}}, y)}) \to \PR_T(\gn{\xi(\vec{\dot{x}}, 0)})$. 
It follows that $S \vdash \PR_T(\gn{\varphi(\vec{\dot{x}})}) \to \PR_T(\gn{\psi(\vec{\dot{x}})})$.

\medskip

$(1 \Rightarrow 3)$: 
This is proved as in the proof of $(1 \Rightarrow 2)$. 

\medskip

$(3 \Rightarrow 1)$: 
Suppose $\PR_T(x)$ satisfies $\EU$ and $\CBEP$ and let $\varphi(\vec{x})$ and $\psi(\vec{x})$ be formulas such that $T \vdash \varphi(\vec{x}) \to \psi(\vec{x})$. 
Let $\eta(\vec{x}, y)$ denote the formula
\[
    \bigl(y = 0 \land \varphi(\vec{x}) \bigr) \lor \bigl(y \neq 0 \land (\neg \, \varphi(\vec{x}) \land \psi(\vec{x})) \bigr).
\]
Then, we have $T \vdash \eta(\vec{x}, 0) \leftrightarrow \varphi(\vec{x})$. 
Also, by the law of excluded middle, 
\begin{align*}
    T \vdash \exists y\, \eta(\vec{x}, y) & \leftrightarrow \varphi(\vec{x}) \lor (\neg\, \varphi(\vec{x}) \land \psi(\vec{x})), \\
    & \leftrightarrow \psi(\vec{x}). 
\end{align*}
By applying $\EU$, we have $S \vdash \PR_T(\gn{\eta(\vec{\dot{x}}, 0)}) \leftrightarrow \PR_T(\gn{\varphi(\vec{\dot{x}})})$ and $S \vdash \PR_T(\gn{\exists y\, \eta(\vec{\dot{x}}, y)}) \leftrightarrow \PR_T(\gn{\psi(\vec{\dot{x}})})$. 
From $\CBEP$, we get $S \vdash \exists y\, \PR_T(\gn{\eta(\vec{\dot{x}}, \dot{y})}) \to \PR_T(\gn{\exists y\, \eta(\vec{\dot{x}}, y)})$, and hence $S \vdash \PR_T(\gn{\eta(\vec{\dot{x}}, 0)}) \to \PR_T(\gn{\exists y\, \eta(\vec{\dot{x}}, y)})$. 
It follows that $S \vdash \PR_T(\gn{\varphi(\vec{\dot{x}})}) \to \PR_T(\gn{\psi(\vec{\dot{x}})})$. 
\end{proof}

The following proposition is proved in the same way as in the proof of Proposition \ref{MU_eq}. 

\begin{prop}\label{CB_local}
\begin{enumerate}
    \item $\E$ \& \ $\CB \Rightarrow \M$. 
    \item $\E$ \& \ $\CBE \Rightarrow \M$. 
\end{enumerate}
\end{prop}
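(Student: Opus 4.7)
The plan is to adapt the arguments for $(2 \Rightarrow 1)$ and $(3 \Rightarrow 1)$ of Proposition \ref{MU_eq} to the local setting, as the hint in the statement suggests. The only conceptual change is that $\varphi$ and $\psi$ are now single formulas rather than ones parameterized by an outer $\vec{x}$, so no external parameter appears and the auxiliary encoding needs only a single fresh variable $y$. In each part the strategy will be to recast $\varphi$ and $\psi$ as, respectively, a quantified formula and a numerical instance of one and the same one-variable formula, so that $\E$ transports the resulting equivalences under $\PR_T$ and then $\CB$ (resp.\ $\CBE$) transports the quantifier through $\PR_T$ in the correct direction.

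For part 1, I would assume that $\PR_T(x)$ satisfies $\E$ and $\CB$ and that $T \vdash \varphi \to \psi$, and take
\[
    \xi(y) \; \equiv \; (y = 0 \to \psi) \land (y \neq 0 \to (\psi \to \varphi)),
\]
so that $T \vdash \xi(0) \leftrightarrow \psi$ and $T \vdash \forall y\, \xi(y) \leftrightarrow \varphi$. Applying $\E$ yields $S \vdash \PR_T(\gn{\xi(0)}) \leftrightarrow \PR_T(\gn{\psi})$ and $S \vdash \PR_T(\gn{\forall y\, \xi(y)}) \leftrightarrow \PR_T(\gn{\varphi})$. Then $\CB$ applied to $\xi(y)$, specialized to the instance $y = 0$, gives $S \vdash \PR_T(\gn{\forall y\, \xi(y)}) \to \PR_T(\gn{\xi(0)})$, and chaining the three facts produces the desired $S \vdash \PR_T(\gn{\varphi}) \to \PR_T(\gn{\psi})$.

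For part 2, I would instead assume $\E$ and $\CBE$ and use
\[
    \eta(y) \; \equiv \; (y = 0 \land \varphi) \lor (y \neq 0 \land (\neg\, \varphi \land \psi)),
\]
so that $T \vdash \eta(0) \leftrightarrow \varphi$ and, using the law of excluded middle together with $T \vdash \varphi \to \psi$, $T \vdash \exists y\, \eta(y) \leftrightarrow \psi$. After applying $\E$ to obtain the corresponding equivalences under $\PR_T$, I would apply $\CBE$ to $\eta(y)$ and specialize to $y = 0$ to obtain $S \vdash \PR_T(\gn{\eta(0)}) \to \PR_T(\gn{\exists y\, \eta(y)})$. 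Chaining then gives $S \vdash \PR_T(\gn{\varphi}) \to \PR_T(\gn{\psi})$.

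There is essentially no real obstacle beyond discovering the encodings $\xi$ and $\eta$, and these are already supplied by Proposition \ref{MU_eq}. The only point needing a bit of care is verifying in $T$ the two equivalences for $\eta$, the second of which requires a short case split on $\varphi$ using the hypothesis $T \vdash \varphi \to \psi$; after that, the remaining steps are purely mechanical applications of $\E$ and of $\CB$ or $\CBE$.
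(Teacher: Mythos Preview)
Your proposal is correct and follows exactly the approach indicated by the paper, which simply refers back to the proof of Proposition \ref{MU_eq}. The auxiliary formulas $\xi(y)$ and $\eta(y)$ you use are precisely the local (parameter-free) specializations of those in that proof, and the applications of $\E$, $\CB$, and $\CBE$ go through as you describe.
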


\subsection{A refinement of Hilbert and Bernays' G2}

Hilbert and Bernays \cite{HB39} proved that if a $\Sigma_1$ provability predicate $\PR_T(x)$ satisfies the following three conditions $\HB{1}$, $\HB{2}$ and $\HB{3}$, then $T \nvdash \ConH_T$. 

\begin{description}
	\item [$\HB{1}$] If $T \vdash \varphi \to \psi$, then $T \vdash \PR_T(\gn{\varphi}) \to \PR_T(\gn{\psi})$. 
	\item [$\HB{2}$] $T \vdash \PR_T(\gn{\neg \, \varphi(x)}) \to \PR_T(\gn{\neg \, \varphi(\dot{x})})$. 
	\item [$\HB{3}$] $T \vdash f(x) = 0 \to \PR_T(\gn{f(\dot{x}) = 0})$ for every primitive recursive term $f(x)$.
\end{description}

In our framework, $\HB{1}$, $\HB{2}$ and $\HB{3}$ correspond to $\M$, $\CB$ and $\DCU$, respectively. 
So, a version of G2 due to Hilbert and Bernays is formulated as follows. 

\begin{fact}[Hilbert and Bernays \cite{HB39}]\label{UHB}
$\M$ \&\ $\CB$ \& $\DCU \Rightarrow T \nvdash \ConH_T$. 
\end{fact}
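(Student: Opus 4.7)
The plan is to run the classical G\"odel-style argument but to exploit the $\Delta_0$-structure of the underlying proof predicate, so as to bypass any appeal to $\D{3}$ or formalized $\Sigma_1$-completeness. Writing the $\Sigma_1$ provability predicate as $\PR_T(x) \equiv \exists y\, \Proof_T(x, y)$ with $\Proof_T$ a $\Delta_0$ formula, I would take the G\"odel fixed point $\varphi$ with $\PA \vdash \varphi \leftrightarrow \neg\, \PR_T(\gn{\varphi})$, i.e.\ equivalently $\PA \vdash \varphi \leftrightarrow \forall y\, \neg\, \Proof_T(\gn{\varphi}, y)$. By $\D{1}$ and the consistency of $T$, one immediately gets $T \nvdash \varphi$ in the usual way (from $T \vdash \varphi$ one would derive $S \vdash \PR_T(\gn{\varphi})$, contradicting the fixed point inside $T \supseteq S$). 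It therefore suffices to prove $T \vdash \ConH_T \to \varphi$; combined with $T \nvdash \varphi$, this gives $T \nvdash \ConH_T$.

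For $T \vdash \ConH_T \to \varphi$, I would reason inside $T$, assume $\ConH_T$ together with $\PR_T(\gn{\varphi})$, and derive a contradiction. Since $\PR_T(\gn{\varphi})$ unfolds to $\exists y\, \Proof_T(\gn{\varphi}, y)$, fix a witness $y_0$. The goal is to produce two conflicting $\PR_T$-statements about the single formula $\Proof_T(\gn{\varphi}, \dot{y_0})$. On the positive side, $\DCU$ applied to the $\Delta_0$ formula $\Proof_T(\gn{\varphi}, y)$ gives the $S$-theorem $\forall y\, (\Proof_T(\gn{\varphi}, y) \to \PR_T(\gn{\Proof_T(\gn{\varphi}, \dot y)}))$, and specializing at $y_0$ yields $\PR_T(\gn{\Proof_T(\gn{\varphi}, \dot{y_0})})$. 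On the negative side, applying $\M$ to the fixed-point equivalence produces $S \vdash \PR_T(\gn{\varphi}) \to \PR_T(\gn{\forall y\, \neg\, \Proof_T(\gn{\varphi}, y)})$, and $\CB$ then distributes $\PR_T$ through the universal quantifier to give $\forall y\, \PR_T(\gn{\neg\, \Proof_T(\gn{\varphi}, \dot y)})$, whose instance at $y_0$ is $\PR_T(\gn{\neg\, \Proof_T(\gn{\varphi}, \dot{y_0})})$. Instantiating $\ConH_T$ at the code $\gn{\Proof_T(\gn{\varphi}, \dot{y_0})}$ now forbids both provabilities simultaneously, yielding the desired contradiction.

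The three conditions divide the labour cleanly: $\M$ transports the fixed-point equivalence under $\PR_T$, $\CB$ turns provability of a universal statement into pointwise provability of its instances, and $\DCU$ internalizes the single true $\Delta_0$-instance supplied by the witness $y_0$. The main subtlety I expect will need care is precisely the observation that one never has to prove $S \vdash \PR_T(\gn{\varphi}) \to \PR_T(\gn{\PR_T(\gn{\varphi})})$, which does not obviously follow from $\M$, $\CB$, $\DCU$ (a uniform version of $\M$ would be required, to propagate the local instances of $\M$ over an internal existential quantifier). The reformulation of $\varphi$ as the universal sentence $\forall y\, \neg\, \Proof_T(\gn{\varphi}, y)$ is what allows $\CB$ to act directly and do the work that $\SC$ would otherwise have to.
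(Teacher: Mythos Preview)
Your argument is correct and is essentially the classical Hilbert--Bernays proof that the Fact cites; the paper itself does not supply a proof of this statement but records it as a known result attributed to \cite{HB39}.

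It is worth noting, however, that the paper immediately refines this Fact by dropping $\M$ (Theorem~\ref{HB_refine}), and its route differs from yours. Rather than working directly with a G\"odel sentence, the paper factors through the $\Delta_0$ uniform reflection principle, showing separately that $\DCU$ yields $S + \ConH_T \vdash \RFN_T(\Delta_0)$ (Proposition~\ref{DCU_to_RFN}) and that $\CB$ alone yields $T \nvdash \RFN_T(\Delta_0)$ (Proposition~\ref{CB_to_RFN}). In your proof the sole use of $\M$ is to transport the fixed-point equivalence $\varphi \leftrightarrow \forall y\, \neg\, \Proof_T(\gn{\varphi}, y)$ under $\PR_T$; the paper eliminates this step by taking a $\Pi_1$ fixed point $\varphi \leftrightarrow \neg\, \PR_T(\mathsf{pnf}(\gn{\varphi}))$, where $\mathsf{pnf}$ returns a prenex normal form, so that the argument of $\PR_T$ is already syntactically a sentence $\forall \vec{x}\, \delta(\vec{x})$ with $\delta$ a $\Delta_0$ formula and $\CB$ applies directly. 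Your proof thus recovers the original result cleanly, while the paper's decomposition buys the stronger conclusion that $\M$ is redundant and isolates the individual roles of $\CB$ and $\DCU$.
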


In \cite[Problem 2.17.1]{Kura20}, the author asked whether there exists a $\Sigma_1$ provability predicate satisfying $\CB$ and $\DCU$ such that $T \vdash \ConH_T$. 
We refine Fact \ref{UHB} by showing that no such provability predicate exists, and in doing so, we analyze the roles of $\CB$ and $\DCU$ through the uniform $\Delta_0$ reflection principle $\RFN_T(\Delta_0)$. 
Here, $\RFN_T(\Delta_0)$ is the set $\{\forall \vec{x}\, \bigl(\PR_T(\gn{\varphi(\vec{\dot{x}})}) \to \varphi(\vec{x}) \bigr) \mid \varphi(\vec{x})$ is a $\Delta_0$ formula$\}$. 

\begin{prop}\label{DCU_to_RFN}
$\DCU \Rightarrow S + \ConH_T \vdash \RFN_T(\Delta_0)$. 
\end{prop}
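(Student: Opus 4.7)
The plan is to prove the schema: for every $\Delta_0$ formula $\varphi(\vec{x})$, $S + \ConH_T \vdash \forall \vec{x}\, (\PR_T(\gn{\varphi(\vec{\dot{x}})}) \to \varphi(\vec{x}))$. The key observation is that $\Delta_0$ is closed under negation, so $\DCU$ applies to $\neg \varphi$ as well as to $\varphi$, and this is precisely what is needed to turn an instance of Hilbert and Bernays' consistency into an instance of reflection.

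First, I would fix an arbitrary $\Delta_0$ formula $\varphi(\vec{x})$ and note that $\neg \, \varphi(\vec{x})$ is also $\Delta_0$. Applying $\DCU$ to $\neg \, \varphi$ yields
\[
    S \vdash \forall \vec{x}\, \bigl(\neg \, \varphi(\vec{x}) \to \PR_T(\gn{\neg \, \varphi(\vec{\dot{x}})}) \bigr).
\]
Next, I would reason inside $S + \ConH_T$: fix $\vec{x}$ and assume $\PR_T(\gn{\varphi(\vec{\dot{x}})})$. Suppose for contradiction that $\neg \, \varphi(\vec{x})$ holds. Then by the $\DCU$ instance just derived, $\PR_T(\gn{\neg \, \varphi(\vec{\dot{x}})})$ holds. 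On the other hand, since $\Fml(\gn{\varphi(\vec{\dot{x}})})$ is provably true and $\dot{\neg}\, \gn{\varphi(\vec{\dot{x}})}$ provably denotes $\gn{\neg \, \varphi(\vec{\dot{x}})}$, the instance of $\ConH_T$ at $\gn{\varphi(\vec{\dot{x}})}$ yields $\neg \, \PR_T(\gn{\neg \, \varphi(\vec{\dot{x}})})$, a contradiction. Hence $\varphi(\vec{x})$, which gives the desired reflection instance.

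There is essentially no serious obstacle here. The only points requiring minor care are: (i) verifying that negation preserves $\Delta_0$-hood so that $\DCU$ does apply to $\neg \, \varphi$; and (ii) ensuring that the coding behaves well enough that $S$ proves $\Fml(\gn{\varphi(\vec{\dot{x}})})$ and $\dot{\neg}\, \gn{\varphi(\vec{\dot{x}})} = \gn{\neg \, \varphi(\vec{\dot{x}})}$ uniformly in $\vec{x}$, which is one of the standard assumptions on the primitive recursive coding terms used throughout the paper. Once these are in place, the argument is just the instantiation of $\ConH_T$ at $\gn{\varphi(\vec{\dot{x}})}$ together with the $\DCU$-image of the assumed counterexample.
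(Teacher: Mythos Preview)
Your proof is correct and follows essentially the same approach as the paper: apply $\DCU$ to $\neg\,\varphi$ (using closure of $\Delta_0$ under negation), then combine with the instance of $\ConH_T$ at $\gn{\varphi(\vec{\dot{x}})}$ to obtain the reflection instance. The paper's version is terser, omitting the explicit verification of $\Fml$ and the $\dot{\neg}$ identity that you rightly flag as routine.
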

\begin{proof}
Suppose $\PR_T(x)$ satisfies $\DCU$. 
Let $\varphi(\vec{x})$ be any $\Delta_0$ formula. 
Since $\neg\, \varphi(\vec{x})$ is also a $\Delta_0$ formula, we have $S \vdash \neg\, \varphi(\vec{x}) \to \PR_T(\gn{\neg\, \varphi(\vec{\dot{x}})})$ by $\DCU$. 
Since $S + \ConH_T \vdash \PR_T(\gn{\varphi(\vec{\dot{x}})}) \to \neg\, \PR_T(\gn{\neg\, \varphi(\vec{\dot{x}})})$, we conclude $S + \ConH_T \vdash \PR_T(\gn{\varphi(\vec{\dot{x}})}) \to \varphi(\vec{x})$. 
\end{proof}

The following proposition is itself a version of G2. 

\begin{prop}\label{CB_to_RFN}
$\CB \Rightarrow T \nvdash \RFN_T(\Delta_0)$. 
\end{prop}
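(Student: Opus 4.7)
The plan is to assume for contradiction that $T \vdash \RFN_T(\Delta_0)$ and derive the inconsistency of $T$ by a G\"odel-style diagonal argument. The role of $\CB$ will be to internalize the universal quantifier of a $\Pi_1$ G\"odel sentence as a family of per-instance provability claims, while uniform $\Delta_0$ reflection pulls those internal provabilities back down to truth. Together these effects simulate Löb-style reasoning without invoking $\D{2}$ or $\D{3}$.

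Let $\Proof_T(z, y)$ be the standard $\Delta_0$ proof predicate, so that $\PR_T(y) \equiv \exists z\, \Proof_T(z, y)$. I would apply the usual G\"odel-style fixed-point construction to the formula $\neg\, \PR_T(y) \equiv \forall z\, \neg\, \Proof_T(z, y)$, yielding a $\Pi_1$ sentence $\psi$ that is literally of the form $\forall z\, \theta(z)$ for some $\Delta_0$ formula $\theta(z)$ in the single free variable $z$, and satisfying $\PA \vdash \psi \leftrightarrow \neg\, \PR_T(\gn{\psi})$. It is essential here that $\gn{\psi}$ and $\gn{\forall z\, \theta(z)}$ coincide as numerals, because we have neither $\D{2}$ nor $\E$ available to transport equivalences across $\PR_T$; the standard diagonal construction delivers exactly such a $\psi$ precisely because $\Proof_T$ is $\Delta_0$.

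Applying $\CB$ to the formula $\theta(z)$ then gives $S \vdash \PR_T(\gn{\psi}) \to \forall z\, \PR_T(\gn{\theta(\dot{z})})$. On the other hand, the hypothesis $T \vdash \RFN_T(\Delta_0)$, instantiated at the $\Delta_0$ formula $\theta(z)$, gives $T \vdash \forall z\, (\PR_T(\gn{\theta(\dot{z})}) \to \theta(z))$, and hence $T \vdash \forall z\, \PR_T(\gn{\theta(\dot{z})}) \to \psi$. Chaining these two inside $T \supseteq S$ yields $T \vdash \PR_T(\gn{\psi}) \to \psi$. Combined with the fixed-point equivalence, this gives $T \vdash \PR_T(\gn{\psi}) \to \neg\, \PR_T(\gn{\psi})$, so $T \vdash \neg\, \PR_T(\gn{\psi})$ and therefore $T \vdash \psi$. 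An application of $\D{1}$ then yields $S \vdash \PR_T(\gn{\psi})$, contradicting the consistency of $T$.

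The main obstacle is the bookkeeping for the fixed point that ensures $\psi$ is literally $\forall z\, \theta(z)$ with a $\Delta_0$ matrix $\theta(z)$, so that the $\CB$-instance and the $\RFN_T(\Delta_0)$-instance for $\theta(z)$ line up around the same G\"odel number $\gn{\psi}$. Once this alignment is in place, the remaining argument is a short propositional chase combining $\CB$, reflection, the fixed-point equivalence, and $\D{1}$.
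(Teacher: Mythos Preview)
Your argument is correct and is essentially the paper's proof: both obtain a G\"odel-type fixed point literally of the shape $\forall \vec{z}\,\delta(\vec{z})$ with $\delta$ a $\Delta_0$ formula, apply $\CB$ to pass from $\PR_T(\gn{\forall \vec{z}\,\delta(\vec{z})})$ to per-instance provability, use $\RFN_T(\Delta_0)$ to pull those instances back to truth, and then close the contradiction via the fixed-point equivalence and $\D1$. One caveat on wording: $\PR_T(x)$ here is an \emph{arbitrary} $\Sigma_1$ provability predicate satisfying $\CB$, not the standard one, so ``the standard $\Delta_0$ proof predicate'' should simply be ``the $\Delta_0$ matrix of $\PR_T$''; the paper secures the same syntactic alignment you worry about by composing with a prenex-normal-form term $\mathsf{pnf}$ inside the diagonalization (taking a $\Pi_1$ fixed point $\varphi$ of $\neg\,\PR_T(\mathsf{pnf}(x))$ and reading off $\delta$ via $\PA \vdash \mathsf{pnf}(\gn{\varphi}) = \gn{\forall \vec{x}\,\delta(\vec{x})}$), whereas you build the fixed point directly in prenex form.
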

\begin{proof}
Suppose $\PR_T(x)$ satisfies $\CB$. 
Let $\mathsf{pnf}(x)$ be a term corresponding to a primitive recursive function calculating the G\"odel number of a formula in prenex normal form $\PA$-equivalent to $\varphi$ from that of $\varphi$. 
By the Fixed Point Theorem, we find a $\Pi_1$ sentence $\varphi$ satisfying $\PA \vdash \varphi \leftrightarrow \neg\, \PR_T(\mathsf{pnf}(\gn{\varphi}))$. 
Let $\delta(\vec{x})$ be a $\Delta_0$ formula such that $\PA \vdash \mathsf{pnf}(\gn{\varphi}) =  \gn{\forall \vec{x}\, \delta(\vec{x})}$. 
Then, we have $\PA \vdash \forall \vec{x}\, \delta(\vec{x}) \leftrightarrow \neg\, \PR_T(\gn{\forall \vec{x}\, \delta(\vec{x})})$. 

From $\CB$, we have $S \vdash \PR_T(\gn{\forall \vec{x}\, \delta(\vec{x})}) \to \PR_T(\gn{\delta(\vec{\dot{x}})})$. 
Suppose, towards a contradiction, that $T \vdash \RFN_T(\Delta_0)$. 
Then, $T \vdash \PR_T(\gn{\forall \vec{x}\, \delta(\vec{x})}) \to \delta(\vec{x})$, and thus $T \vdash \PR_T(\gn{\forall \vec{x}\, \delta(\vec{x})}) \to \forall \vec{x}\, \delta(\vec{x})$ by generalization. 
From the fixed point equivalence, $T \vdash \neg \, \forall \vec{x}\, \delta(\vec{x}) \to \forall \vec{x}\, \delta(\vec{x})$, and thus $T \vdash \forall \vec{x}\, \delta(\vec{x})$. 
By applying $\D{1}$, we have $S \vdash \PR_T(\gn{\forall \vec{x}\, \delta(\vec{x})})$. 
By the fixed point equivalence again, $S \vdash \neg\, \forall \vec{x}\, \delta(\vec{x})$, a contradiction. 
\end{proof}

By combining these propositions, we obtain the following theorem. 

\begin{thm}\label{HB_refine}
$\CB$ \& $\DCU \Rightarrow T \nvdash \ConH_T$. 
\end{thm}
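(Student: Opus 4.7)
The plan is to deduce Theorem \ref{HB_refine} by combining the two immediately preceding propositions, which together factor the unprovability of $\ConH_T$ through the intermediate principle $\RFN_T(\Delta_0)$. Concretely, I would argue by contradiction: assume $\PR_T(x)$ satisfies both $\CB$ and $\DCU$ and that $T \vdash \ConH_T$. Since $T$ extends $S$, from $T \vdash \ConH_T$ together with Proposition \ref{DCU_to_RFN} (which exploits $\DCU$ to turn $\ConH_T$ into the $\Delta_0$ reflection principle) we obtain $T \vdash \RFN_T(\Delta_0)$. But Proposition \ref{CB_to_RFN} asserts precisely that $\CB$ alone suffices to rule out $T \vdash \RFN_T(\Delta_0)$, yielding the desired contradiction.

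Thus no substantive new construction is required at this stage; the real content has already been front-loaded into Propositions \ref{DCU_to_RFN} and \ref{CB_to_RFN}. The conceptual interest is in the bookkeeping of the two hypotheses: $\DCU$ is used solely to transform the Hilbert--Bernays consistency statement into $\Delta_0$ reflection (via the observation that $\neg\, \varphi(\vec{x})$ is $\Delta_0$ when $\varphi(\vec{x})$ is), while $\CB$ is used solely to run the diagonal argument against a $\Pi_1$ fixed point in prenex normal form. Neither step invokes $\M$, which is what makes this a genuine refinement of Fact \ref{UHB}.

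There is essentially no obstacle specific to this final step: the proof is a two-line synthesis. The main care needed is to note that $S + \ConH_T \vdash \RFN_T(\Delta_0)$ promotes to $T \vdash \RFN_T(\Delta_0)$ under the assumption $T \vdash \ConH_T$, which is immediate because $T$ extends $S$ by the global conventions on $S$ and $T$ fixed at the start of Section~2. I would simply state this chain of implications explicitly and cite the two propositions.
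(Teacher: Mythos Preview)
Your proposal is correct and matches the paper's own argument essentially verbatim: the paper simply says the theorem follows by combining Propositions \ref{DCU_to_RFN} and \ref{CB_to_RFN}, and your write-up spells out precisely that combination, including the minor point that $T$ extends $S$ so that $S + \ConH_T \vdash \RFN_T(\Delta_0)$ yields $T \vdash \RFN_T(\Delta_0)$ once $T \vdash \ConH_T$.
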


By combining Theorem \ref{G2_refine_E}.2 and Proposition \ref{EU_to_D1U}.3, we have ``$\CBE$ \& $\DCU \Rightarrow T \nvdash \ConS_T$'', which may be understood as a dual version of Theorem \ref{HB_refine}.

\subsection{A refinement of the proof of $\SC$}

Fact \ref{uniform}.4 states the implication $\MU \Rightarrow \SCU$. 
By combining this with Proposition \ref{MU_eq}, we have that $\EU$ \& $\CBEP  \Rightarrow \SCU$. 
In this subsection, by examining the proof of the latter implication, we show that most parts of the proof require only the assumption $\EU$, and in particular, we obtain the following new theorem.

\begin{thm}\label{EU_to_SCU}
  $\EU$ \&  $\CBE \Rightarrow \SC$. 
\end{thm}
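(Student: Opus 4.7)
The plan is to follow the proof of the stronger implication $\EU$ \& $\CBEP \Rightarrow \SCU$ (which comes from Fact \ref{uniform}.4 combined with Proposition \ref{MU_eq}), carefully tracking where the strength of $\CBEP$ over $\CBE$ is actually used. The key observation is that $\CBEP$ features in essentially one step of the argument---the pushing of an existential quantifier through $\PR_T$---and that this step only needs the parameter-free $\CBE$ when we settle for the local $\SC$ rather than the uniform $\SCU$.

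Concretely, I first fix a $\Sigma_1$ sentence $\sigma$ and invoke a $\PA$-provable prenex normal form $\sigma \leftrightarrow \exists \vec{x}\, \delta(\vec{x})$ with $\delta$ a $\Delta_0$ formula whose only free variables are $\vec{x}$, transferring this equivalence via $\EU$ so that it suffices to prove
\[
    S \vdash \exists \vec{x}\, \delta(\vec{x}) \to \PR_T(\gn{\exists \vec{x}\, \delta(\vec{x})}).
\]
Next, I carry out the $\Delta_0$-completeness subargument inside the proof of $\MU \Rightarrow \SCU$ in \cite[Theorem 2.20]{Kura20}, aiming to establish
\[
    S \vdash \forall \vec{x}\, \bigl(\delta(\vec{x}) \to \PR_T(\gn{\delta(\vec{\dot{x}})})\bigr). \qquad (\ast)
\]
The atomic cases are handled by combining $\DU{1}$ (obtained from $\EU$ via Proposition \ref{EU_to_D1U}.1) applied to trivial $T$-theorems such as $\forall y\,(y = y)$ with the $\PA$-provable identification of Gödel numbers of numerals (e.g.\ from $x = y$ one derives $\gn{\dot{x} = \dot{y}} = \gn{\dot{x} = \dot{x}}$). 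The boolean and bounded-quantifier inductive steps are handled by rewriting each compound $\Delta_0$ formula via a $\PA$-provable equivalence and transferring through $\PR_T$ by $\EU$. Finally, reasoning in $S$, from the antecedent $\exists \vec{x}\, \delta(\vec{x})$ a witness $\vec{x}$ together with $(\ast)$ yields $\exists \vec{x}\, \PR_T(\gn{\delta(\vec{\dot{x}})})$, and $\CBE$ delivers $\PR_T(\gn{\exists \vec{x}\, \delta(\vec{x})})$. It is precisely here---and only here---that the weaker $\CBE$ (without free parameters) suffices, since the existential we are pushing belongs to a sentence.

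The main obstacle is establishing $(\ast)$ without $\MU$. The standard proof of parametric $\Delta_0$-completeness uses $\MU$ to propagate implications through $\PR_T$, and in particular would appeal to $\C$ in the $\land$-case, which is not available to us; $\EU$, by contrast, only propagates equivalences. Overcoming this requires carefully re-expressing each compound $\Delta_0$ formula as a $\PA$-equivalent variant whose $\PR_T$-completeness reduces to previously handled cases via pure $\EU$-substitution, in the spirit of the auxiliary-formula constructions used in the proof of Proposition \ref{MU_eq}. Once $(\ast)$ is secured, the closing $\CBE$-step is routine, and the conclusion $S \vdash \sigma \to \PR_T(\gn{\sigma})$ follows by transferring back through $\EU$.
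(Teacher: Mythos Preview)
Your global plan---put $\sigma$ in prenex form $\exists \vec{x}\,\delta(\vec{x})$, establish parametric completeness $(\ast)$ for the matrix using only $\EU$, then close with one parameter-free application of $\CBE$---matches the paper's, and you correctly locate the obstacle at $(\ast)$. The gap is in your route to $(\ast)$. A structural induction on $\Delta_0$ formulas does not go through with $\EU$ alone: the $\land$-step asks you to pass from $\PR_T(\gn{\delta_1(\vec{\dot{x}})})\land\PR_T(\gn{\delta_2(\vec{\dot{x}})})$ to $\PR_T(\gn{(\delta_1\land\delta_2)(\vec{\dot{x}})})$, i.e.\ an instance of $\C$; the $\lor$-step needs an instance of $\M$; bounded quantifiers are worse still. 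The auxiliary-formula constructions of Proposition~\ref{MU_eq} that you invoke do not rescue this, because they work by introducing a fresh variable and then eliminating it via $\CBEP$ (or $\CBP$) \emph{with the original $\vec{x}$ still free as parameters}---precisely the strengthened principle we lack. With only the parameter-free $\CBE$, those tricks are unavailable inside the inductive step, so ``pure $\EU$-substitution'' cannot handle an arbitrary $\Delta_0$ matrix.

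The paper's fix is to refuse to stop at a $\Delta_0$ matrix. Via MRDP together with a further syntactic normalization (Corollary~\ref{Cor_MRDP}), the \emph{sentence} $\sigma$ is rewritten as $\exists\vec{y}\,\bigvee_i\bigwedge_j(z_{i,j}=t_{i,j}(\vec{y}))$ with simple terms $t_{i,j}$, so that \emph{all} auxiliary existentials (those arising from $\neq$, $\leq$, nested terms, bounded quantifiers) are absorbed into the single outermost block. For this very special matrix, $(\ast)$ is established from $\EU$ alone (Lemmas~\ref{Lem1} and~\ref{Lem2}): assuming one disjunct $\bigwedge_j(z_{i_0,j}=x_{i_0,j})$ in $S$, one starts from the $\DU{1}$-instance $\PR_T\bigl(\gn{\bigwedge_j(\dot{z_{i_0,j}}=\dot{z_{i_0,j}})\lor\varphi_{i_0}}\bigr)$, uses the assumed equalities \emph{outside} $\PR_T$ to rewrite the G\"odel-number term, and then applies $\EU$ to reassociate disjuncts. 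Conjunction is thereby handled by the meta-level equality axiom (every conjunct being an equality), not by $\C$; disjunction is handled by carrying the remaining disjuncts $\varphi_{i_0}$ along from the start. Upgrading variables to simple terms is a separate induction with explicit cases for $0$, $\s$, $+$, $\times$. This is the idea your outline is missing.
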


Before proving the theorem, we prepare several lemmata.

\begin{lem}\label{Lem1}
Suppose that $\PR_T(x)$ satisfies $\EU$. 
Then, for any families $\{x_{i,j}\}_{\substack{i \leq k \\ j \leq l}}$ and $\{z_{i,j}\}_{\substack{i \leq k \\ j \leq l}}$ of variables, we have
\[
    S \vdash \bigvee_{i \leq k} \bigwedge_{j \leq l} (z_{i,j} = x_{i,j}) \to \PR_T\Bigl(\gn{\bigvee_{i \leq k} \bigwedge_{j \leq l} (\dot{z_{i,j}} = \dot{x_{i,j}})}\Bigr).
\]
\end{lem}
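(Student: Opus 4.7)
The plan is to reduce the claim to $\DU{1}$ (which follows from $\EU$ by Proposition~\ref{EU_to_D1U}.1) and then exploit the fact that the hypothesis consists entirely of variable identities. Whenever that hypothesis holds with some particular disjunct $i_0$ as witness, $z_{i_0, j}$ and $x_{i_0, j}$ take the same value for every $j$, so the substituted numerals $\dot{z_{i_0, j}}$ and $\dot{x_{i_0, j}}$ are literally identical. Consequently the $i_0$-th disjunct of the target G\"odel-numbered formula reduces to a conjunction of atoms of the form $\overline{n}=\overline{n}$, which is a trivial tautology of $T$. The task is to promote this local observation to a uniform $S$-proof via $\DU{1}$.

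Concretely, for each fixed $i_0 \leq k$ I would introduce the auxiliary formula
\[
\alpha_{i_0}(\vec{z}, \vec{x}) \;:=\; \bigvee_{i \leq k} \delta^{i_0}_i(\vec{z}, \vec{x}),
\]
where $\delta^{i_0}_i := \bigwedge_{j \leq l}(z_{i,j} = x_{i,j})$ for $i \neq i_0$ and $\delta^{i_0}_{i_0} := \bigwedge_{j \leq l}(z_{i_0,j} = z_{i_0,j})$. Since the $i_0$-th disjunct is a propositional tautology, $T \vdash \forall \vec{z}, \vec{x}\, \alpha_{i_0}(\vec{z}, \vec{x})$, and so $\DU{1}$ delivers
\[
S \vdash \forall \vec{z}, \vec{x}\; \PR_T\bigl(\gn{\alpha_{i_0}(\vec{\dot{z}}, \vec{\dot{x}})}\bigr).
\]
I would then reason inside $S$: assume the hypothesis of the lemma and split into $k+1$ cases according to which $i_0$ witnesses the disjunction. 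In case $i_0$, the assumption $z_{i_0, j} = x_{i_0, j}$ for every $j$ combined with the formalized recursive properties of the numeral-substitution term yields an $S$-provable identity
\[
\gn{\alpha_{i_0}(\vec{\dot{z}}, \vec{\dot{x}})} \;=\; \gn{\bigvee_{i \leq k}\bigwedge_{j \leq l}(\dot{z_{i,j}} = \dot{x_{i,j}})},
\]
since the two formulas differ only in their $i_0$-th disjunct and the atoms there agree once the substituted numerals coincide. Together with the displayed $\DU{1}$-consequence, this produces the required $\PR_T$-assertion in each case.

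The only delicate step is verifying in $\PA$ that substituting equal values produces syntactically identical numerals, and therefore identical G\"odel numbers of the substituted formulas; this rests on the primitive recursive construction of $\dot{(\cdot)}$ and of the substitution term $\gn{\cdots(\dot{u})\cdots}$. One must also keep the disjunct ordering of $\alpha_{i_0}$ aligned with that of the target formula so the G\"odel numbers coincide literally; had we not done so, a single extra application of $\EU$ could absorb the discrepancy. Beyond these bookkeeping points, the finite case analysis on $i_0$ and the pointwise invocation of $\DU{1}$ are entirely mechanical.
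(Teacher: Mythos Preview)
Your proposal is correct and follows essentially the same approach as the paper: for each $i_0$ you trivialize the $i_0$-th disjunct to a conjunction of identities $z_{i_0,j}=z_{i_0,j}$, apply $\DU{1}$ (obtained from $\EU$ via Proposition~\ref{EU_to_D1U}.1) to the resulting tautology, and then use the outer-level equality assumption $z_{i_0,j}=x_{i_0,j}$ to identify the substituted G\"odel numbers. The only organizational difference is that the paper pulls the $i_0$-th disjunct out front and then invokes $\EU$ once more to reabsorb it into the original ordering, whereas you keep the disjunct in place and thereby make the G\"odel numbers coincide literally without that extra $\EU$ step---exactly the bookkeeping point you flagged.
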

\begin{proof}
Suppose that $\PR_T(x)$ satisfies $\EU$. 
For each $i_0 \leq k$, let $\varphi_{i_0}(\vec{z}, \vec{x})$ be the formula 
    \[
    \bigvee_{\substack{i \leq k \\ i \neq i_0}} \bigwedge_{j \leq l} (z_{i,j} = x_{i,j}).
    \]
    Since $T \vdash \bigwedge_{j \leq l} (z_{i_0, j} = z_{i_0, j})$, we have
    \[
        T \vdash \bigwedge_{j \leq l} (z_{i_0, j} = z_{i_0, j}) \lor \varphi_{i_0}(\vec{z}, \vec{x}).
    \]
    By Proposition \ref{EU_to_D1U}.1, $\PR_T(x)$ satisfies $\DU{1}$.
    So, 
    \[
        S \vdash \PR_T\Bigl(\gn{\bigwedge_{j \leq l} (\dot{z_{i_0, j}} = \dot{z_{i_0, j}}) \lor \varphi_{i_0}(\vec{\dot{z}}, \vec{\dot{x}}) }\Bigr).
    \]
    Let $v_0, \ldots, v_{i_0}$ be fresh variables.
    By the equality axiom,  
    \begin{align*}
        \PA \vdash \bigwedge 
        _{j \leq l}(z_{i_0, j} = v_j) & \land \PR_T\Bigl(\gn{\bigwedge_{j \leq l} (\dot{z_{i_0, j}} = \dot{z_{i_0, j}}) \lor \varphi_{i_0}(\vec{\dot{z}}, \vec{\dot{x}}) }\Bigr)\\
        & \quad \quad \to \PR_T\Bigl(\gn{\bigwedge_{j \leq l} (\dot{z_{i_0, j}} = \dot{v_j}) \lor \varphi_{i_0}(\vec{\dot{z}}, \vec{\dot{x}}) }\Bigr).
    \end{align*}
    Then, we obtain
    \[
        S \vdash \bigwedge 
        _{j \leq l}(z_{i_0, j} = v_j) \to \PR_T\Bigl(\gn{\bigwedge_{j \leq l} (\dot{z_{i_0, j}} = \dot{v_j}) \lor \varphi_{i_0}(\vec{\dot{z}}, \vec{\dot{x}}) }\Bigr).
    \]
    By substituting $x_{i_0, j}$ for $v_j$, we get
    \begin{equation}\label{eq1}
        S \vdash \bigwedge 
        _{j \leq l}(z_{i_0, j} = x_{i_0, j}) \to \PR_T\Bigl(\gn{\bigwedge_{j \leq l} (\dot{z_{i_0, j}} = \dot{x_{i_0, j}}) \lor \varphi_{i_0}(\vec{\dot{z}}, \vec{\dot{x}}) }\Bigr).
    \end{equation}
    Here, since
    \[
        T \vdash \bigwedge_{j \leq l} (z_{i_0, j} = x_{i_0, j}) \lor \varphi_{i_0}(\vec{z}, \vec{x}) \leftrightarrow \bigvee_{i \leq k} \bigwedge_{j \leq l} (z_{i,j} = x_{i,j}), 
    \]
    by applying $\EU$, we obtain
    \[
        S \vdash  \PR_T\Bigl(\gn{\bigwedge_{j \leq l} (\dot{z_{i_0, j}} = \dot{x_{i_0, j}}) \lor \varphi_{i_0}(\vec{\dot{z}}, \vec{\dot{x}}) }\Bigr) \leftrightarrow \PR_T\Bigl(\gn{\bigvee_{i \leq k} \bigwedge_{j \leq l} (\dot{z_{i,j}} = \dot{x_{i,j}})}\Bigr). 
    \]
    By combining this with (\ref{eq1}), we have
    \[
            S \vdash \bigwedge 
        _{j \leq l}(z_{i_0, j} = x_{i_0, j}) \to \PR_T\Bigl(\gn{\bigvee_{i \leq k} \bigwedge_{j \leq l} (\dot{z_{i,j}} = \dot{x_{i,j}})}\Bigr).
    \]
    Therefore we conclude
\[
    S \vdash \bigvee_{i \leq k} \bigwedge_{j \leq l} (z_{i,j} = x_{i,j}) \to \PR_T\Bigl(\gn{\bigvee_{i \leq k} \bigwedge_{j \leq l} (\dot{z_{i,j}} = \dot{x_{i,j}})}\Bigr).
\]
\end{proof}

Here, we recall the following fact. 

\begin{fact}[{\cite[Lemma 3.2]{Kura20}}\footnote{Unfortunately, the statement of \cite[Lemma 3.2]{Kura20} contains a typo.}]\label{subst}
For any formula $\varphi(\vec{y}, v)$, 
\[
    \PA \vdash \gn{\varphi(\vec{\dot{y}}, \dot{v})} [\s(x) \slash v] = \gn{\varphi(\vec{\dot{y}}, \s(\dot{x}))},
\]
where $\gn{\varphi(\vec{\dot{y}}, \dot{v})} [\s(x) \slash v]$ is the result of substituting $\s(x)$ for $v$ in $\gn{\varphi(\vec{\dot{y}}, \dot{v})}$. 
\end{fact}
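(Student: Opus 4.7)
The plan is to reduce the claim to a single recurrence identity about the numeral function. In the standard G\"odel coding, $\gn{\varphi(\vec{\dot{y}}, \dot{v})}$ is a primitive recursive term that, from the values of $\vec{y}$ and $v$, produces the code of $\varphi(\num{y_1}, \ldots, \num{y_k}, \num{v})$. This factors through two fixed primitive recursive operations on codes: $\operatorname{num}(w)$, returning the code of $\num{w}$, and $\widehat{\s}(c)$, returning the code of $\s(t)$ given the code $c$ of $t$. Concretely, there is a fixed $\LA$-term $t_\varphi(u_1, \ldots, u_k, u)$, depending only on $\varphi$ and inserting codes of terms into the syntactic template of $\varphi$, such that
\[
\gn{\varphi(\vec{\dot{y}}, \dot{v})} = t_\varphi(\operatorname{num}(y_1), \ldots, \operatorname{num}(y_k), \operatorname{num}(v)).
\]

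First, I would observe that the metalevel substitution $[\s(x)/v]$ acts on the right-hand side only at the single slot occupied by $\operatorname{num}(v)$, yielding
\[
\gn{\varphi(\vec{\dot{y}}, \dot{v})}[\s(x)/v] = t_\varphi(\operatorname{num}(y_1), \ldots, \operatorname{num}(y_k), \operatorname{num}(\s(x))).
\]
Next, I would unpack the right-hand side of the target: by the recursive definition of term-coding on compounds, the code of $\s(\num{x})$ equals $\widehat{\s}(\operatorname{num}(x))$, so inserting it into the $v$-slot of $t_\varphi$ gives
\[
\gn{\varphi(\vec{\dot{y}}, \s(\dot{x}))} = t_\varphi(\operatorname{num}(y_1), \ldots, \operatorname{num}(y_k), \widehat{\s}(\operatorname{num}(x))).
\]
The asserted identity then collapses to the single equation
\[
\PA \vdash \operatorname{num}(\s(x)) = \widehat{\s}(\operatorname{num}(x)),
\]
which is immediate from the defining primitive recursive clause $\operatorname{num}(\s(n)) = \widehat{\s}(\operatorname{num}(n))$ of the numeral function, formalized in $\PA$ via the standard $\Sigma_1$-representation of primitive recursive functions.

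The main obstacle is notational bookkeeping rather than mathematical depth. To make the factorization above precise, one must commit to a specific implementation of the G\"odel numbering and the dot convention, and then check that the slot of $t_\varphi$ corresponding to $v$ on the left is the same slot populated by $\widehat{\s}(\operatorname{num}(x))$ on the right. This is precisely the spot at which \cite[Lemma~3.2]{Kura20} is reported to contain a typo, so the crucial care is in writing the statement so that the $v$-slot on the left-hand side and the $\s(\dot{x})$-slot on the right-hand side literally correspond to the same position in the template $t_\varphi$; once that alignment is fixed, the reduction to $\operatorname{num}(\s(x)) = \widehat{\s}(\operatorname{num}(x))$ is mechanical.
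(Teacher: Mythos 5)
The paper does not actually prove this statement: it is imported verbatim as a Fact from \cite[Lemma 3.2]{Kura20} (with a footnote correcting a typo there), so there is no in-paper argument to compare yours against. That said, your reconstruction is correct and is the standard argument one would give. Both sides of the identity factor through the same code-producing operation --- whether you realize $\gn{\varphi(\vec{\dot{y}},\dot{v})}$ as a syntactic template $t_\varphi$ with $\operatorname{num}$ plugged into the variable slots, as you do, or via a formalized substitution function $\mathrm{sub}(\gn{\varphi},\gn{v},\operatorname{num}(v))$, the metalevel substitution $[\s(x)/v]$ turns the $v$-slot into $\operatorname{num}(\s(x))$, while $\gn{\varphi(\vec{\dot{y}},\s(\dot{x}))}$ puts $\widehat{\s}(\operatorname{num}(x))$ (the code of $\s$ applied to the numeral of $x$) in that same slot, and everything collapses to the single $\PA$-provable recursion equation $\operatorname{num}(\s(x)) = \widehat{\s}(\operatorname{num}(x))$. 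Your closing remark correctly identifies the only genuinely delicate point, namely fixing the coding conventions so that the two slots literally coincide; this is exactly the bookkeeping issue behind the typo the footnote alludes to. The one thing I would make explicit is that $\operatorname{num}$ must be introduced in $\PA$ so that its defining equations are provable (e.g.\ as a provably total $\Sigma_1$-defined function), since the final step silently uses this.
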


For each term $t$, let $d(t)$ be the number of function and constant symbols contained in $t$. 
We say that a term $t$ is \textit{simple} iff $d(t) \leq 1$.

\begin{lem}\label{Lem2}
Suppose that $\PR_T(x)$ satisfies $\EU$. 
Then, for any family $\{z_{i,j}\}_{\substack{i \leq k \\ j \leq l}}$ of variables and any family $\{t_{i,j}(\vec{x})\}_{\substack{i \leq k \\ j \leq l}}$ of simple terms, we have
\[
    S \vdash \bigvee_{i \leq k} \bigwedge_{j \leq l} (z_{i,j} = t_{i,j}(\vec{x})) \to \PR_T\Bigl(\gn{\bigvee_{i \leq k} \bigwedge_{j \leq l} (\dot{z_{i,j}} = t_{i,j}(\vec{\dot{x}}))}\Bigr).
\]
\end{lem}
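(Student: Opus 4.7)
The plan is to follow the proof of Lemma~\ref{Lem1} closely, replacing the free variables $x_{i,j}$ there by the simple terms $t_{i,j}(\vec x)$. For each fixed $i_0 \le k$, set $\varphi_{i_0}(\vec z,\vec x) := \bigvee_{i \ne i_0}\bigwedge_{j \le l}(z_{i,j} = t_{i,j}(\vec x))$; the target intermediate statement is
\[
S \vdash \bigwedge_{j \le l}(z_{i_0,j} = t_{i_0,j}(\vec x)) \to \PR_T\Bigl(\gn{\bigvee_{i}\bigwedge_{j}(\dot{z_{i,j}} = t_{i,j}(\vec{\dot x}))}\Bigr),
\]
from which the lemma follows by taking the disjunction over $i_0$.

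Starting from the $T$-tautology $\bigwedge_j(t_{i_0,j}(\vec x) = t_{i_0,j}(\vec x)) \lor \varphi_{i_0}(\vec z,\vec x)$, I would apply $\DU{1}$ (available from $\EU$ via Proposition~\ref{EU_to_D1U}.1) to obtain
\[
S \vdash \PR_T\Bigl(\gn{\bigwedge_j(t_{i_0,j}(\vec{\dot x}) = t_{i_0,j}(\vec{\dot x})) \lor \varphi_{i_0}(\vec{\dot z},\vec{\dot x})}\Bigr).
\]
Next I would introduce fresh variables $v_j$ ($j \le l$) and use $\PA$'s equality axiom---exactly as in the last step of the proof of Lemma~\ref{Lem1}---to swap each right-hand occurrence of $t_{i_0,j}(\vec{\dot x})$ with $\dot{v_j}$ under the hypothesis $v_j = t_{i_0,j}(\vec x)$. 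The swap is licensed by the substitution identity $\dot{t_{i_0,j}(\vec x)} = t_{i_0,j}(\vec{\dot x})$, which is immediate for $t$ a variable or the constant $0$ and is precisely Fact~\ref{subst} for $t = \s(x_k)$.

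Substituting $v_j := z_{i_0,j}$ externally then converts the hypothesis into $z_{i_0,j} = t_{i_0,j}(\vec x)$ and yields
\[
S \vdash \bigwedge_j(z_{i_0,j} = t_{i_0,j}(\vec x)) \to \PR_T\Bigl(\gn{\bigwedge_j(\dot{z_{i_0,j}} = t_{i_0,j}(\vec{\dot x})) \lor \varphi_{i_0}(\vec{\dot z},\vec{\dot x})}\Bigr).
\]
A final application of $\EU$ replaces the inside formula $\bigwedge_j(\dot{z_{i_0,j}} = t_{i_0,j}(\vec{\dot x})) \lor \varphi_{i_0}$ with the full disjunction $\bigvee_i\bigwedge_j(\dot{z_{i,j}} = t_{i,j}(\vec{\dot x}))$ (since the two formulas are $T$-equivalent), giving the intermediate statement.

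The main obstacle I expect is the substitution identity needed for binary simple terms such as $t = x_k + x_l$ or $t = x_k \cdot x_l$. Here Fact~\ref{subst}'s analogue is not literally an equality of G\"odel-number terms in $\PA$---the numeral of a sum differs syntactically from the sum of numerals---so in these cases the equality-axiom swap has to be replaced by an application of $\EU$ to a $T$-provable biconditional relating the two forms of the inside formula under the constraint $v_j = t_{i_0,j}(\vec x)$. This is the step where the restriction to simple terms (at most one function or constant symbol) is essential.
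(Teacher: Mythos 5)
Your reduction to Lemma \ref{Lem1} covers only the easy cases, and the step you flag as the ``main obstacle'' is a genuine gap: the fix you propose does not work. After you substitute $t_{i_0,j}(\vec x)$ (say $x+y$) for the fresh variable $v_j$, the term inside $\PR_T$ computes the G\"odel number of the formula containing the single numeral $\overline{x+y}$, whereas the target contains the term $\dot{x}+\dot{y}$, i.e.\ $\overline{x}+\overline{y}$; these are G\"odel numbers of syntactically distinct formulas. $\EU$ cannot bridge them ``under the constraint $v_j = t_{i_0,j}(\vec x)$'': $\EU$ only converts an outright $T$-provable universal biconditional between two fixed formulas into an $S$-provable biconditional between the corresponding dotted provability statements, and the term obtained after your external substitution is not the dotted G\"odel-number term of any fixed formula in $\vec x$. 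If you instead build the constraint into the formulas (comparing, say, $v = x+y \land z = v$ with $v = x+y \land z = x+y$), you then need $S \vdash v = x+y \to \PR_T(\gn{\dot{v} = \dot{x}+\dot{y}})$ to discharge the added conjunct inside the provability predicate --- which is exactly the instance of provable atomic ($\Sigma_1$-) completeness you are trying to establish, so the argument is circular. The underlying obstruction is that the $T$-proofs of $\overline{m}+\overline{n} = \overline{m+n}$ grow with $n$, so no single application of $\EU$ can yield the uniform statement.

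What is missing is an induction carried out \emph{inside} $S$. The paper proceeds by induction on the number of non-variable simple terms, eliminating one term at a time via a fresh variable $v$; your treatment of variables, $0$, and $\s(x)$ agrees with its Cases 1--2 (using Fact \ref{subst}). But for $t = x+y$ the paper applies the induction axiom of $S$ to $\forall u_0\, \xi(u_0,u_1)$, where $\xi$ expresses the desired implication for $z_{k,l} = u_0+u_1$, using $\EU$ at each successor step on the fixed $T$-provable equivalence $z_{k,l} = \s(u_0)+u_1 \leftrightarrow z_{k,l} = u_0+\s(u_1)$ together with Fact \ref{subst}; and for $t = x\times y$ it needs a further nested induction, reducing $x\times\s(u_0)$ to $(x\times u_0)+x$ via an auxiliary formula and an inner induction as in the addition case. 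Your proposal contains no internal induction at all, so the $+$ and $\times$ cases --- which carry essentially all of the content of the lemma beyond Lemma \ref{Lem1} --- remain unproved.
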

\begin{proof}
    We prove this lemma by induction on the number $p$ of members $t$ of the family $\{t_{i,j}(\vec{x})\}_{\substack{i \leq k \\ j \leq l}}$ with $d(t) = 1$. 

    If $p = 0$, then every $t_{i, j}$ is a variable, so the lemma follows from Lemma \ref{Lem1}. 

    Assume that the lemma holds for any family of simple terms in which $p=m$.
    Let $\{t_{i,j}(\vec{x})\}_{\substack{i \leq k \\ j \leq l}}$ be a family of simple terms such that $p=m+1$.
    By an appropriate equivalence transformation and in view of the application of the condition $\EU$, we may, without loss of generality, assume that $d(t_{k, l}(\vec{x})) = 1$.

    Let $\varphi(\vec{z}, \vec{x})$ and $\psi(\vec{z}, \vec{x})$ be formulas
    \begin{align*}
        \varphi(\vec{z}, \vec{x}) & : \equiv \bigvee_{i < k} \bigwedge_{j \leq l} (z_{i, j} = t_{i, j}(\vec{x})), \\
        \psi(\vec{z}, \vec{x}) & : \equiv \bigwedge_{j < l} (z_{k, j} = t_{k, j}(\vec{x})). 
    \end{align*}
    Then, $\bigvee_{i \leq k} \bigwedge_{j \leq l} (z_{i,j} = t_{i,j}(\vec{x}))$ is exactly $\varphi(\vec{z}, \vec{x}) \land (\psi(\vec{z}, \vec{x}) \land z_{k, l} = t_{k, l}(\vec{x}))$. 
    Since $d(t_{k, l}(\vec{x})) = 1$, we have that $t_{k, l}(\vec{x})$ is one of $0$, $\s(x)$, $x + y$, and $x \times y$. 
    
    Let $v$ be any fresh variable. 
    By the induction hypothesis,
    \begin{equation}\label{eq2}
    S \vdash \varphi(\vec{z}, \vec{x}) \land (\psi(\vec{z}, \vec{x}) \land z_{k, l} = v) \to \PR_T(\gn{\varphi(\vec{\dot{z}}, \vec{\dot{x}}) \land (\psi(\vec{\dot{z}}, \vec{\dot{x}}) \land \dot{z_{k, l}} = \dot{v})}).
    \end{equation}
    We prove
    \begin{align*}
    S \vdash \varphi(\vec{z}, \vec{x}) & \land (\psi(\vec{z}, \vec{x}) \land z_{k, l} = t_{k, l}(\vec{x}))\\
    & \to \PR_T(\gn{\varphi(\vec{\dot{z}}, \vec{\dot{x}}) \land (\psi(\vec{\dot{z}}, \vec{\dot{x}}) \land \dot{z_{k, l}} = t_{k, l}(\vec{\dot{x}}))})
    \end{align*}
    by distinguishing the following four cases. 

    \medskip

    Case 1: $t_{k, l}(\vec{x})$ is $0$. \\
    By substituting $0$ for $v$ in (\ref{eq2}), since $0$ is a numeral, we obtain
    \[
    S \vdash \varphi(\vec{z}, \vec{x}) \land (\psi(\vec{z}, \vec{x}) \land z_{k, l} = 0) \to \PR_T(\gn{\varphi(\vec{\dot{z}}, \vec{\dot{x}}) \land (\psi(\vec{\dot{z}}, \vec{\dot{x}}) \land \dot{z_{k, l}} = 0)}).
    \]

    \medskip

    Case 2: $t_{k, l}(\vec{x})$ is $\s(x)$. \\
    By substituting $\s(x)$ for $v$ in (\ref{eq2}), by Lemma \ref{subst}, we obtain
    \[
    S \vdash \varphi(\vec{z}, \vec{x}) \land (\psi(\vec{z}, \vec{x}) \land z_{k, l} = \s(x)) \to \PR_T(\gn{\varphi(\vec{\dot{z}}, \vec{\dot{x}}) \land (\psi(\vec{\dot{z}}, \vec{\dot{x}}) \land \dot{z_{k, l}} = \s(\dot{x}))}).
    \]

    \medskip

    Case 3: $t_{k, l}(\vec{x})$ is $x + y$. \\
    Let $u_0, u_1$ be fresh variables. 
    Let $\xi(u_0, u_1)$ be the formula
    \begin{align*}
        \varphi(\vec{z}, \vec{x}) & \land (\psi(\vec{z}, \vec{x}) \land z_{k, l} = u_0+u_1)\\
    & \to \PR_T(\gn{\varphi(\vec{\dot{z}}, \vec{\dot{x}}) \land (\psi(\vec{\dot{z}}, \vec{\dot{x}}) \land \dot{z_{k, l}} = \dot{u_0} + \dot{u_1})}).
    \end{align*}
    We prove $S \vdash \forall u_1\, \forall u_0 \, \xi(u_0, u_1)$ by using the induction axiom for $\forall u_0 \, \xi(u_0, u_1)$ on $u_1$. 
    By substituting $u_0$ for $v$ in (\ref{eq2}), considering $S \vdash z_{k, l} = u_0 \leftrightarrow z_{k,l} = u_0 + 0$ and applying $\EU$, we obtain $S \vdash \forall u_0\, \xi(u_0, 0)$\footnote{More precisely, we apply $\EU$ to the equivalence between $\varphi(\vec{z}, \vec{x}) \land (\psi(\vec{z}, \vec{x}) \land z_{k, l} = u_0)$ and $\varphi(\vec{z}, \vec{x}) \land (\psi(\vec{z}, \vec{x}) \land z_{k, l} = u_0 + 0)$.}.

    By the instantiation axiom $\forall u_0\, \xi(u_0, u_1) \to \xi(\s(u_0), u_1)$ and Lemma \ref{subst}, 
    \begin{align*}
        S \vdash \forall u_0\, \xi(u_0, u_1) \land \Bigl[\varphi(\vec{z}, \vec{x}) & \land (\psi(\vec{z}, \vec{x}) \land z_{k, l} = \s(u_0)+u_1)\\
    & \to \PR_T(\gn{\varphi(\vec{\dot{z}}, \vec{\dot{x}}) \land (\psi(\vec{\dot{z}}, \vec{\dot{x}}) \land \dot{z_{k, l}} = \s(\dot{u_0}) + \dot{u_1})}) \Bigr].
    \end{align*}
    Since $S \vdash z_{k,l} = \s(u_0) + u_1 \leftrightarrow z_{k, l} = u_0 + \s(u_1)$, by applying $\EU$, we get
    \begin{align*}
        S \vdash \forall u_0\, \xi(u_0, u_1) \land \Bigl[\varphi(\vec{z}, \vec{x}) & \land (\psi(\vec{z}, \vec{x}) \land z_{k, l} = u_0+\s(u_1))\\
    & \to \PR_T(\gn{\varphi(\vec{\dot{z}}, \vec{\dot{x}}) \land (\psi(\vec{\dot{z}}, \vec{\dot{x}}) \land \dot{z_{k, l}} = \dot{u_0} + \s(\dot{u_1}))}) \Bigr].
    \end{align*}
    This means $S \vdash \forall u_0\, \xi(u_0, u_1) \to \forall u_0\, \xi(u_0, \s(u_1))$, and thus $S \vdash \forall u_1\, \forall u_0\, \xi(u_0, u_1)$. 
    We conclude $S \vdash \xi(x, y)$. 

    \medskip

    Case 4: $t_{k, l}(\vec{x})$ is $x \times y$. \\
    Let $u_0, u_1$ be fresh variables. 
    Let $\eta(u_0, u_1)$ be the formula
    \begin{align*}
        \varphi(\vec{z}, \vec{x}) & \land (\psi(\vec{z}, \vec{x}) \land z_{k, l} = (x \times u_0)+u_1)\\
    & \to \PR_T(\gn{\varphi(\vec{\dot{z}}, \vec{\dot{x}}) \land (\psi(\vec{\dot{z}}, \vec{\dot{x}}) \land \dot{z_{k, l}} = (\dot{x} \times \dot{u_0}) + \dot{u_1})}).
    \end{align*}
    Since $S \vdash z_{k, l} = x \times y \leftrightarrow z_{k, l} = (x \times y) + 0$, by applying $\EU$, it suffices to prove $S \vdash \eta(y, 0)$. 
    To this end, we prove $S \vdash \forall u_0\, \forall u_1\, \eta(u_0, u_1)$ by using the induction axiom for $\forall u_1\, \eta(u_0, u_1)$ on $u_0$. 

    By substituting $u_1$ for $v$ in (\ref{eq2}), considering $S \vdash z_{k, l} = u_1 \leftrightarrow z_{k,l} = (x \times 0) + u_1$ and applying $\EU$, we obtain $S \vdash \forall u_1\, \eta(0, u_1)$.

    Let $u_2$ be a fresh variable and let $\rho(u_1, u_2)$ be the formula
    \begin{align*}
        \varphi(\vec{z}, \vec{x}) & \land (\psi(\vec{z}, \vec{x}) \land z_{k, l} = (x \times u_0)+(u_1 + u_2))\\
    & \to \PR_T(\gn{\varphi(\vec{\dot{z}}, \vec{\dot{x}}) \land (\psi(\vec{\dot{z}}, \vec{\dot{x}}) \land \dot{z_{k, l}} = (\dot{x} \times \dot{u_0}) + (\dot{u_1} + \dot{u_2})}).
    \end{align*}
    In the same way as in Case 3, it is proved that $S \vdash \forall u_1\, \eta(u_0, u_1) \to \forall u_1\, \rho(u_1, 0)$ and $S \vdash \forall u_1\, \rho(u_1, u_2) \to \forall u_1\, \rho(u_1, \s(u_2))$. 
    By the induction axiom, we get $S \vdash \forall u_1\, \eta(u_0, u_1) \to \forall u_2\, \forall u_1\, \rho(u_1, u_2)$, and hence $S \vdash \forall u_1\, \eta(u_0, u_1) \to \rho(u_1, x)$. 
    It follows that $S$ proves
    \begin{align*}
        \forall u_1\, \eta(u_0, u_1) \land \varphi(\vec{z}, \vec{x}) & \land (\psi(\vec{z}, \vec{x}) \land z_{k, l} = (x \times u_0)+(u_1 + x))\\
    & \to \PR_T(\gn{\varphi(\vec{\dot{z}}, \vec{\dot{x}}) \land (\psi(\vec{\dot{z}}, \vec{\dot{x}}) \land \dot{z_{k, l}} = (\dot{x} \times \dot{u_0}) + (\dot{u_1} + \dot{x})}).
    \end{align*}
    Since $S \vdash z_{k, l} = (x \times u_0) + (u_1 + x) \leftrightarrow z_{k, l} = (x \times \s(u_0)) + u_1$, by applying $\EU$, we have $S \vdash \forall u_1\, \eta(u_0, u_1) \to \forall u_1\, \eta(\s(u_0), u_1)$. 
    Therefore, we conclude $S \vdash \forall u_0\, \forall u_1\, \eta(u_0, u_1)$. 
\end{proof}

\begin{fact}
    For any quantifier-free formula $\delta(\vec{x})$, there exist natural numbers $k$ and $l$, a sequence $\vec{y}$ of variables, a family $\{z_{i,j}\}_{\substack{i \leq k \\  j \leq l}} \subseteq \vec{x} \cup \vec{y}$ of variables, and a family $\{t_{i,j}(\vec{x}, \vec{y})\}_{\substack{i \leq k \\  j \leq l}}$ of simple terms such that
    \[
        \PA \vdash \delta(\vec{x}) \leftrightarrow \exists \vec{y}\, \Bigl[\bigvee_{i \leq k} \bigwedge_{j \leq l} (z_{i,j} = t_{i,j}(\vec{x}, \vec{y})) \Bigr].
    \]
\end{fact}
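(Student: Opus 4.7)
The plan is to normalize $\delta(\vec{x})$ by three successive transformations, each preserving $\PA$-provable equivalence. First, put $\delta$ into disjunctive normal form $\bigvee_i \bigwedge_j L_{i,j}$ where each literal $L_{i,j}$ is of the form $s = t$, $\neg(s=t)$, $s \leq t$, or $\neg(s \leq t)$ between arbitrary $\LA$-terms. Second, eliminate the $\leq$-literals and the negated equations using the standard $\PA$-provable equivalences
\begin{align*}
s \leq t &\leftrightarrow \exists w\, (s + w = t), \\
\neg(s \leq t) &\leftrightarrow \exists w\, (t + \s(w) = s), \\
\neg(s = t) &\leftrightarrow \exists w\, \bigl( s + \s(w) = t \,\lor\, t + \s(w) = s \bigr).
\end{align*}
After renaming to keep all bound variables fresh and mutually distinct, pull every existential quantifier outside and redistribute the resulting disjunctions to restore disjunctive normal form. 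The matrix is now a DNF all of whose literals are equations between arbitrary terms.

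Third, decompose each remaining equation so that both sides are simple. For any term $r$ one recursively introduces fresh existentially bound variables naming each nontrivial subterm: for instance, $r_1 + r_2$ with $r_1$ or $r_2$ not a variable is handled by introducing $u_1, u_2$ together with equations $u_1 = r_1$ and $u_2 = r_2$ (to be processed recursively) and a top-level equation $u = u_1 + u_2$ for a fresh $u$; the cases where the outermost symbol is $\s$, $\times$, or the constant $0$ are analogous, and a variable is already of the required form. Applying this procedure to the two sides of an equation $s = t$ and linking the resulting top-level names by a single further equation $u_s = u_t$ converts $s = t$ into an existentially quantified conjunction of equations of the form $z_{i,j} = t_{i,j}$ with $z_{i,j}$ a variable and $t_{i,j}$ a simple term. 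Collecting all introduced existential quantifiers into a single block $\exists \vec{y}$ and distributing once more to preserve the outer DNF yields the stated form.

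The main obstacle I expect is purely bookkeeping rather than mathematical content. Each use of the elimination equivalences, as well as each recursive term decomposition, may introduce disjunctions inside the scope of a freshly introduced existential; these must be redistributed outward to maintain the DNF matrix, and the combinatorial growth of disjuncts together with the careful tracking of variable freshness is where the proof requires the most care. None of the $\PA$-equivalences invoked is deep—each follows from the basic axioms of addition, multiplication, and the definition of $\leq$—so the substantive work lies in organizing the simultaneous induction on formula complexity and term structure cleanly.
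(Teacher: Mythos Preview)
Your approach is essentially the same as the paper's: eliminate $\leq$, $\nleq$, and $\neq$ via existentially quantified equations, flatten compound terms by introducing fresh existentially bound names for subterms, and pass to DNF. The paper performs the atomic-level rewrites first and takes DNF once at the end, whereas you take DNF first and restore it after each phase; this is a difference of organization only. One small point you omit that the paper makes explicit: after reaching the final DNF you must pad the disjuncts with trivially true equations (e.g.\ $y = y$ for some $y \in \vec{x} \cup \vec{y}$) so that every disjunct has the same number $l$ of conjuncts, as required by the statement's uniform indexing $\{z_{i,j}\}_{i \leq k,\, j \leq l}$.
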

\begin{proof}
Let $\delta(\vec{x})$ be any quantifier-free formula. 
First, replace every subformula of $\delta(\vec{x})$ that is of the form $t_0 \neq t_1$ with $t_0 \nleq t_1 \lor t_1 \nleq t_0$.
Then, replace subformulas of the form $t_0 \leq t_1$ with $\exists z\, (t_0 + z = t_1)$, and subformulas of the form $t_0 \nleq t_1$ with $\exists z\, (t_1 + \s(z) = t_0)$. 
Next, transform the resulting formula so that all terms become simple by eliminating nested occurrences of function symbols through the use of existential quantifiers. 
Then, replace every subformula that is of the form $t_0 = t_1$ with $\exists z\, (z = t_0 \land  z = t_1)$.
Then, we find a quantifier-free formula $\delta_0(\vec{x}, \vec{v})$ whose atomic subformulas are of the form $z = t(\vec{x}, \vec{y})$ for some simple term $t(\vec{x}, \vec{y})$ such that $\PA \vdash \delta(\vec{x}) \leftrightarrow \exists \vec{v}\, \delta_0(\vec{x}, \vec{v})$. 
Finally, rewrite $\delta_0(\vec{x}, \vec{v})$ in disjunctive normal form, ensuring in particular that each disjunct contains the same number of conjuncts.
\end{proof}

\begin{fact}[MRDP Theorem]
    For any $\Sigma_1$ formula $\varphi(\vec{x})$, there exists a quantifier-free formula $\delta(\vec{x}, \vec{y})$ such that
    \[
        \PA \vdash \varphi(\vec{x}) \leftrightarrow \exists \vec{y}\, \delta(\vec{x}, \vec{y}).
    \]
\end{fact}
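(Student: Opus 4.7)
The plan is to invoke the Matiyasevich--Robinson--Davis--Putnam (MRDP) theorem in its formalized version over $\PA$. In its sharpest form, MRDP asserts that for every $\Sigma_1$ formula $\varphi(\vec{x})$ there exist polynomials $p(\vec{x}, \vec{y})$ and $q(\vec{x}, \vec{y})$ with non-negative integer coefficients such that $\PA \vdash \varphi(\vec{x}) \leftrightarrow \exists \vec{y}\,\bigl(p(\vec{x}, \vec{y}) = q(\vec{x}, \vec{y})\bigr)$. Since a polynomial equation of this form is a quantifier-free atomic formula of $\LA$, setting $\delta(\vec{x}, \vec{y})$ to be $p(\vec{x}, \vec{y}) = q(\vec{x}, \vec{y})$ yields the Fact directly---and indeed in the stronger Diophantine form, which is considerably more than the Fact asserts.

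The core ingredients needed for a proof of MRDP are the following. First, one reduces an arbitrary $\Sigma_1$ formula to the prenex shape $\exists \vec{z}\, \delta_0(\vec{x}, \vec{z})$ with a $\Delta_0$ matrix $\delta_0$. Second, one eliminates the bounded quantifiers in $\delta_0$ by means of sequence coding, using a Diophantine version of G\"odel's $\beta$-function: each occurrence of $(\forall u \leq t)\,\theta(u)$ is replaced by the existence of a coded sequence witnessing $\theta$ at every relevant position, and each $(\exists u \leq t)\,\theta(u)$ is unbounded in the obvious way. Third, one appeals to the Matiyasevich--Robinson theorem that the graph of exponentiation $z = x^y$ is Diophantine, which in turn supplies the required Diophantine $\beta$-function and closes the reduction. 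Each of these steps is then carried out within $\PA$ itself, not merely in the standard model.

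The main obstacle lies in the number-theoretic analysis underlying the third step: the Diophantine representation of exponentiation via the behaviour of solutions to Pell-type equations of the form $x^2 - dy^2 = 1$. This argument is lengthy and technically intricate, but it is entirely elementary, and the induction lemmata needed to verify each Diophantine representation provably in $\PA$ present no conceptual difficulty---only substantial bookkeeping. A complete formalized development is worked out in H\'ajek and Pudl\'ak \cite{HP93}, which I would cite as the reference for this Fact rather than reproduce the argument in the present paper.
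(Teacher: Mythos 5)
Your proposal is correct and takes essentially the same route as the paper: the paper states this as a named Fact with no proof, treating the $\PA$-formalized MRDP theorem as a known result from the literature (e.g.\ H\'ajek--Pudl\'ak \cite{HP93}), which is exactly what you do, and your Diophantine form $\exists \vec{y}\,(p(\vec{x},\vec{y})=q(\vec{x},\vec{y}))$ is indeed a (stronger) instance of the required quantifier-free $\delta$.
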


\begin{cor}\label{Cor_MRDP}
    For any $\Sigma_1$ formula $\sigma(\vec{x})$, there exist natural numbers $k$ and $l$, a sequence $\vec{y}$ of variables, a family $\{z_{i,j}\}_{\substack{i \leq k \\  j \leq l}} \subseteq \vec{x} \cup \vec{y}$ of variables, and a family $\{t_{i,j}(\vec{x}, \vec{y})\}_{\substack{i \leq k \\  j \leq l}}$ of simple terms such that
    \[
        \PA \vdash \sigma(\vec{x}) \leftrightarrow \exists \vec{y}\, \Bigl[\bigvee_{i \leq k} \bigwedge_{j \leq l} (z_{i,j} = t_{i,j}(\vec{x}, \vec{y})) \Bigr].
    \]
\end{cor}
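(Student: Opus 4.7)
The plan is to combine the two immediately preceding facts in the obvious way: first reduce the $\Sigma_1$ formula to an existentially quantified quantifier-free formula via MRDP, and then apply the normal-form result for quantifier-free formulas to the matrix.

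More explicitly, given a $\Sigma_1$ formula $\sigma(\vec{x})$, the first step is to invoke the MRDP Theorem to obtain a quantifier-free formula $\delta(\vec{x}, \vec{y}_0)$ such that $\PA \vdash \sigma(\vec{x}) \leftrightarrow \exists \vec{y}_0\, \delta(\vec{x}, \vec{y}_0)$. The second step is to apply the previous Fact to $\delta(\vec{x}, \vec{y}_0)$, treating the tuple $(\vec{x}, \vec{y}_0)$ as the free variable block; this yields natural numbers $k, l$, a fresh sequence $\vec{y}_1$ of variables, a family $\{z_{i,j}\}_{\substack{i \leq k \\ j \leq l}} \subseteq \vec{x} \cup \vec{y}_0 \cup \vec{y}_1$, and a family $\{t_{i,j}(\vec{x}, \vec{y}_0, \vec{y}_1)\}_{\substack{i \leq k \\ j \leq l}}$ of simple terms such that
\[
    \PA \vdash \delta(\vec{x}, \vec{y}_0) \leftrightarrow \exists \vec{y}_1\, \Bigl[\bigvee_{i \leq k} \bigwedge_{j \leq l} (z_{i,j} = t_{i,j}(\vec{x}, \vec{y}_0, \vec{y}_1)) \Bigr].
\]

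The third and final step is to concatenate the two existential blocks: setting $\vec{y} := \vec{y}_0{}^\frown \vec{y}_1$, we obtain
\[
    \PA \vdash \sigma(\vec{x}) \leftrightarrow \exists \vec{y}\, \Bigl[\bigvee_{i \leq k} \bigwedge_{j \leq l} (z_{i,j} = t_{i,j}(\vec{x}, \vec{y})) \Bigr],
\]
which is precisely the desired normal form. There is no genuine obstacle here; the only point that requires minor care is the bookkeeping of variables, namely ensuring that the $\vec{y}_1$ produced by the Fact is chosen disjoint from $\vec{x} \cup \vec{y}_0$ so that the merged existential block $\exists \vec{y}$ still captures exactly the auxiliary witnesses and leaves $\vec{x}$ free. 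Since the Fact explicitly permits the auxiliary variables $z_{i,j}$ to come from $\vec{x} \cup \vec{y}_0 \cup \vec{y}_1$, this merging is immediate and the corollary follows.
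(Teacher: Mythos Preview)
Your proposal is correct and is precisely the intended argument: the paper states this as an immediate corollary of the two preceding Facts without giving a proof, and the obvious combination---apply MRDP to obtain $\exists \vec{y}_0\,\delta(\vec{x},\vec{y}_0)$, then apply the quantifier-free normal-form Fact to $\delta$, and merge the existential blocks---is exactly what you wrote out. The variable-bookkeeping point you flag is the only thing to check, and you handle it correctly.
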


We are ready to prove Theorem \ref{EU_to_SCU}. 

\begin{proof}[Proof of Theorem \ref{EU_to_SCU}]
Suppose $\PR_T(x)$ satisfies $\EU$ and $\CBE$. 
Let $\sigma$ be any $\Sigma_1$ sentence. 
By Corollary \ref{Cor_MRDP}, there exist natural numbers $k$ and $l$, a sequence $\vec{y}$ of variables, a family $\{z_{i,j}\}_{\substack{i \leq k \\  j \leq l}} \subseteq \vec{y}$ of variables, and a family $\{t_{i,j}(\vec{y})\}_{\substack{i \leq k \\  j \leq l}}$ of simple terms such that
    \begin{equation}\label{eq5}
        \PA \vdash \sigma \leftrightarrow \exists \vec{y}\, \Bigl[\bigvee_{i \leq k} \bigwedge_{j \leq l} (z_{i,j} = t_{i,j}(\vec{y})) \Bigr].
    \end{equation}
    By applying $\E$, we get
    \begin{equation}\label{eq6}
        S \vdash \PR_T(\gn{\sigma}) \leftrightarrow \PR_T\Bigl(\gn{\exists \vec{y}\, \Bigl[\bigvee_{i \leq k} \bigwedge_{j \leq l} (\dot{z_{i,j}} = t_{i,j}(\vec{y})) \Bigr] } \Bigr).
    \end{equation}
    By Lemma \ref{Lem2}, we obtain
    \[
        S \vdash \bigvee_{i \leq k} \bigwedge_{j \leq l} (z_{i,j} = t_{i,j}(\vec{y})) \to \PR_T\Bigl(\gn{\bigvee_{i \leq k} \bigwedge_{j \leq l} (\dot{z_{i,j}} = t_{i,j}(\vec{\dot{y}})) } \Bigr).
    \]
    By combining this with (\ref{eq5}), we have
    \[
        S \vdash \sigma \to \exists \vec{y}\, \PR_T\Bigl(\gn{\bigvee_{i \leq k} \bigwedge_{j \leq l} (\dot{z_{i,j}} = t_{i,j}(\vec{\dot{y}})) } \Bigr).
    \]
    The application $\CBE$, together with (\ref{eq6}), yields
    \[
        S \vdash \sigma \to \PR_T(\gn{\sigma}). 
    \]
    This completes the proof. 
\end{proof}

Of course, by replacing $\CBE$ with $\CBEP$ in the above proof of Theorem \ref{EU_to_SCU}, we obtain a proof of ``$\EU$ \& $\CBEP \Rightarrow \SCU$'' which is exactly Fact \ref{uniform}.4.

By combining Theorem \ref{EU_to_SCU}, Propositions \ref{CB_local} and the fact ``$\D{2} \Leftrightarrow \M$ \& $\C$'', we obtain the following corollary. 

\begin{cor}
    $\EU$ \& $\CBE$ \& $\C \Rightarrow \D{2}$ \& $\D{3}$. 
\end{cor}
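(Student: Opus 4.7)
The plan is to assemble this corollary directly from the three ingredients cited in the statement: Theorem \ref{EU_to_SCU}, Proposition \ref{CB_local}, and the ``obvious'' equivalence $\D{2} \Leftrightarrow \M$ \& $\C$ recorded after the definition of $\E$ and $\C$. No new fixed-point argument or induction is needed; everything is a routine bookkeeping exercise.

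First, I would derive $\D{3}$. Assuming $\EU$ and $\CBE$, Theorem \ref{EU_to_SCU} immediately yields $\SC$. Since the implication $\SC \Rightarrow \D{3}$ was noted as obvious in the list of local conditions (via instantiating the $\Sigma_1$ schema at the $\Sigma_1$ sentence $\PR_T(\gn{\varphi})$ and invoking the $\Sigma_1$ form of $\PR_T$), we obtain $\D{3}$ at once.

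Next, I would derive $\D{2}$. The hypothesis $\EU$ trivially entails its local version $\E$ by restricting to formulas with no free variables. Combined with $\CBE$, Proposition \ref{CB_local}.2 gives $\M$. We are also assuming $\C$ outright, so the stated equivalence $\D{2} \Leftrightarrow \M$ \& $\C$ yields $\D{2}$.

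Since both conclusions are pure invocations of earlier results, there is no real obstacle — the only thing to verify with any care is that the uniform-to-local step $\EU \Rightarrow \E$ is legitimate so that Proposition \ref{CB_local}.2 applies, but this is immediate. The corollary then follows by conjoining the two conclusions.
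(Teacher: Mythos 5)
Your proposal is correct and follows essentially the same route as the paper: Theorem \ref{EU_to_SCU} plus the obvious implication $\SC \Rightarrow \D{3}$ gives $\D{3}$, while $\EU \Rightarrow \E$, Proposition \ref{CB_local}.2, and the equivalence $\D{2} \Leftrightarrow \M$ \& $\C$ give $\D{2}$. Nothing to correct.
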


Thus, the set $\{\EU, \CBE, \C\}$ is sufficient for G2 of $\ConL_T$. 

Unfortunately, we do not know whether $\{\EU, \CBE\}$ is strictly weaker than $\{\EU, \CBEP\}$.

\begin{prob}
    Is there a $\Sigma_1$ provability predicate satisfying $\EU$ and $\CBE$, but not $\CBEP$?
\end{prob}

\section{Optimality}

In this section, we discuss the optimality of several versions of G2. 
To this end, we list some known results on the existence of provability predicates satisfying several conditions. 

\begin{fact}\label{examples}
    \begin{enumerate}
         \item \textup{(Arai \cite{Ara90})} There exists a $\Sigma_1$ provability predicate satisfying $\DG{3}$ such that $T \vdash \ConH_T$. 
        \item \textup{(\cite[Theorem 9]{Kura21})} There exists a $\Sigma_1$ provability predicate satisfying $\D{2}^{\mathrm{U}}$ and $\DCU$ such that $T \vdash \ConH_T$. 
        \item \textup{(\cite[Theorem 10]{Kura21})} When $S = T$, there exists a $\Sigma_1$ provability predicate satisfying $\CB$, $\D{2}$, and $\DCU$ such that $T \vdash \ConS_T$. 
        \item \textup{(\cite[Theorem 11]{Kura21})} When $S = T$, there exists a $\Sigma_1$ provability predicate satisfying $\CB$, $\M$, $\DU{3}$, $\DCU$, and $\Ros$.
        \item \textup{(Mostowski \cite[p.~24]{Mos65})} There exists a $\Sigma_1$ provability predicate satisfying $\DU{1}$, $\SCU$, and $\C$ such that $T \vdash \ConL_T$. 
    \end{enumerate}
\end{fact}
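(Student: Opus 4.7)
My plan is to treat this Fact as a compilation of existence results drawn verbatim from the indicated sources, so the task reduces to recalling the construction behind each clause rather than developing a new argument. In each case one designs a $\Sigma_1$ provability predicate $\PR_T^*(x)$ by modifying the standard proof predicate $\Prf_T(x,p)$ with a syntactic side condition, then verifies (i) that $T \vdash \varphi \iff S \vdash \PR_T^*(\gn{\varphi})$, so $\PR_T^*$ really is a provability predicate, (ii) that the indicated derivability conditions hold, and (iii) that the stated consistency principle is $T$-provable.

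For clause 5 (Mostowski), the canonical witness is the Rosser-style predicate $\Prf_T^*(x,p) :\equiv \Prf_T(x,p) \land \forall q \leq p\, \neg\, \Prf_T(\gn{0=1},q)$ with $\PR_T^*(x) :\equiv \exists p\, \Prf_T^*(x,p)$. Consistency of $T$ implies $\PR_T^*$ represents $T$-provability; $\DU{1}$ comes from the primitive-recursive nature of $\Prf_T$; $\SCU$ follows from the MRDP route used in Theorem \ref{EU_to_SCU}; and $\C$ follows by concatenating two such restricted proofs, since the Rosser clause is monotone in $p$. Most importantly, $T \vdash \ConL_T$ holds essentially trivially: if $p$ witnessed $\PR_T^*(\gn{0=1})$, the very clause $\forall q \leq p\, \neg\,\Prf_T(\gn{0=1},q)$ would be violated by $p$ itself.

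For clauses 2, 3, and 4, the constructions from \cite{Kura21} follow the same template but use more elaborate Rosser-type side conditions, for example replacing the comparison ``$\forall q \leq p\, \neg\,\Prf_T(\gn{0=1},q)$'' by one that compares with proofs of $\dot{\neg}\,x$ (for $\ConH_T$) or with proofs of $\neg\,\varphi$ uniformly in $\varphi$ (for $\ConS_T$ and $\Ros$), and in some variants also restricting attention to cut-free or normalized proofs. The uniformity of these restrictions is what yields the uniform conditions $\DU{2}$, $\DU{3}$, and $\DCU$, while $\CB$ and $\C$ are verified by manipulating witnesses in the restricted proof predicate. Clause 1 (Arai \cite{Ara90}) uses a genuinely different idea tailored to $\DG{3}$, based on a self-referential fixed-point construction of the predicate itself, where the predicate $\PR_T$ is defined to refer to its own formalized version in the global sense.

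The main obstacle, and the reason each clause is nontrivial, is the tension between (ii) and (iii): strengthening the syntactic restriction to secure $T \vdash \ConH_T$ (or $\ConS_T$, $\ConL_T$, $\Ros$) tends to destroy closure properties needed for $\D{2}^{\mathrm{U}}$, $\CB$, $\DCU$, or $\C$, and conversely. The delicate balance is exactly what the cited papers supply, and since the present paper only uses these facts as black-box counterexamples in the subsequent optimality discussion, the proof here amounts to no more than pointing to the references.
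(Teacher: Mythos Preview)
Your overall framing is right: the paper states this as a Fact with external citations and gives no proof whatsoever, so pointing to the references is indeed all that is required here. Your final paragraph captures this correctly. However, the concrete sketch you offer for clause~5 is substantively wrong, and the error is worth understanding.

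The Rosser predicate $\PR_T^R(x) :\equiv \exists p\,\bigl(\Prf_T(x,p) \land \forall q \leq p\,\neg\,\Prf_T(\gn{0=1},q)\bigr)$ does \emph{not} in general satisfy $\DU{1}$, $\SCU$, or $\C$. Your monotonicity claim for $\C$ runs in the wrong direction: the side clause $\forall q \leq p\,\neg\,\Prf_T(\gn{0=1},q)$ becomes \emph{harder} to satisfy as $p$ grows, so concatenating two restricted proofs (which produces a strictly larger witness) does not preserve it, and $S$ cannot rule out a nonstandard proof of $0=1$ appearing between $\max(p_1,p_2)$ and the combined proof. The same obstruction kills $\DU{1}$ and $\SCU$: for nonstandard $\vec{n}$, the proof of $\varphi(\num{n})$ one constructs may lie above a nonstandard proof of $0=1$, so $S$ cannot verify the Rosser clause uniformly. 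Your appeal to Theorem~\ref{EU_to_SCU} does not help either, since the Rosser predicate fails $\EU$ for the same reason.

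Mostowski's actual witness is far simpler and not Rosser-like: take a primitive recursive enumeration $h$ of the theorems of $T$ and set $g(n) = h(n)$ if $h(n) \neq \gn{0=1}$ and $g(n) = \gn{0=0}$ otherwise; then let $\PR_T^*(x) :\equiv \exists n\,(g(n) = x)$. Provably in $\PA$ one has $\PR_T^*(x) \leftrightarrow \PR_T(x)$ for every $x \neq \gn{0=1}$, so $\DU{1}$, $\SCU$, and $\C$ are inherited verbatim from the standard predicate (noting that $\varphi \land \psi$ is never the literal formula $0=1$), while $\neg\,\PR_T^*(\gn{0=1})$ is provable outright by the definition of $g$.
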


First, the optimality of ``$\E$ \& $\C$ \& $\D{3} \Rightarrow T \nvdash \ConL_T$'' (Corollary \ref{G2_refine_EC}.1 and Corollary \ref{G2_EC3}) is guaranteed by the following: 

\begin{itemize}
    \item $\{\E, \C\}$ is not sufficient for G2 of $\ConH_T$ by Fact \ref{examples}.2. 

    \item $\{\E, \D{3}\}$ is not sufficient for G2 of $\Ros$ by Fact \ref{examples}.4. 

    \item $\{\C, \D{3}\}$ is not sufficient for G2 of $\ConL_T$ by Fact \ref{examples}.5. 

    \item $\{\E, \C, \D{3}\}$ is strictly weaker than $\{\D{2}, \D{3}\}$ by the following Theorem \ref{ex1}. 

\end{itemize}

\begin{thm}\label{ex1}
There exists a $\Sigma_1$ provability predicate satisfying $\E$, $\C$, and $\D{3}$, but not $\M$. 
\end{thm}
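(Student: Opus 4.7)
The plan is to construct a $\Sigma_1$ provability predicate $\PR'_T(x)$ of $T$ that satisfies $\E$, $\C$, and $\D{3}$ but not $\M$. The guiding idea is to impose a ``representation via a conjunction of separately verified theorems'' on $\PR'_T$, which naturally respects $T$-equivalence (for $\E$) and concatenation of witnesses (for $\C$), combined with an auxiliary $\Sigma_1$ clause that prevents $\PR'_T$ from being $S$-provably equivalent to the standard $\Sigma_1$ provability predicate $\PR_T(x)$. Concretely, I would attempt a definition of the form
\[
    \PR'_T(x) \equiv \exists \vec{y}\, \Bigl( \bigwedge_i \PR_T(\gn{y_i}) \land \PR_T\bigl(\gn{x \leftrightarrow \bigwedge_i y_i}\bigr) \land \rho(x, \vec{y}) \Bigr),
\]
where $\rho$ is an auxiliary $\Sigma_1$ clause, for instance a Rosser-style ``no earlier proof of $0=1$'' condition applied to the witnessing proofs, or a clause defined via a fixed point produced by the Fixed Point Theorem.

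The key verifications proceed as follows. That $\PR'_T$ is a provability predicate of $T$ follows because externally the clause $\rho$ can be arranged to hold for any $T$-derivation (using the consistency of $T$), and $\D{1}$ for the underlying $\PR_T$ handles the forward direction; the reverse direction reduces to $S \vdash \PR'_T(\gn{\varphi}) \to \PR_T(\gn{\varphi})$, which is immediate from the second conjunct. For $\E$, if $T \vdash \varphi \leftrightarrow \psi$ then $S \vdash \PR_T(\gn{\varphi \leftrightarrow \psi})$, and applying $\D{2}$ of the standard $\PR_T$ to $\PR_T(\gn{x \leftrightarrow \bigwedge_i y_i})$ transforms $\varphi$ into $\psi$ uniformly; one designs $\rho$ to transfer along such equivalences. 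For $\C$, given witness sequences $\vec{y}$ for $\varphi$ and $\vec{y}'$ for $\psi$, the concatenation $\vec{y}\frown\vec{y}'$ witnesses $\PR'_T(\gn{\varphi \land \psi})$, since $T \vdash (\varphi\land\psi) \leftrightarrow (\bigwedge y_i \land \bigwedge y_j')$ and $\rho$ is chosen compatibly with this combination. For $\D{3}$, since $\PR'_T(x)$ is a $\Sigma_1$ formula, formalized $\Sigma_1$-completeness of the standard $\PR_T$ yields $S \vdash \PR'_T(\gn{\varphi}) \to \PR_T(\gn{\PR'_T(\gn{\varphi})})$, and the singleton choice $\vec{y} = \seq{\PR'_T(\gn{\varphi})}$ (together with the trivial equivalence) produces $\PR'_T(\gn{\PR'_T(\gn{\varphi})})$, provided $\rho$ is easily verifiable on such canonical witnesses.

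The main obstacle is verifying non-$\M$. The trouble is that the naive ``representation via conjunction'' construction (without $\rho$) is $S$-provably equivalent to $\PR_T$, which already satisfies $\M$; hence $\rho$ must genuinely separate $\PR'_T$ from $\PR_T$ inside $S$, yet still respect the closure properties required by $\E$ and $\C$. To exhibit a failure of $\M$, I would look for specific $\varphi_0,\psi_0$ with $T \vdash \varphi_0 \to \psi_0$ such as $\varphi_0 \equiv 0=1$ together with $\psi_0$ a Gödel fixed point of $T$, so that neither $\PR'_T(\gn{\varphi_0})$ nor $\PR'_T(\gn{\psi_0})$ is decided by $S$, and then arrange $\rho$ so that it blocks $S$'s ability to transfer provability from $\varphi_0$ to $\psi_0$ while remaining insensitive to $\E$- and $\C$-driven manipulations. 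This delicate balancing, likely carried out through a diagonal construction via the Fixed Point Theorem in the spirit of the provability predicates of Fact \ref{examples}, will be the most subtle part of the argument.
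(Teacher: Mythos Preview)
Your proposal does not yet constitute a proof: every verification you give is conditional on properties of an auxiliary clause $\rho$ that you never construct. You write ``one designs $\rho$ to transfer along such equivalences'', ``provided $\rho$ is easily verifiable on such canonical witnesses'', and ``arrange $\rho$ so that it blocks $S$'s ability to transfer provability \ldots\ while remaining insensitive to $\E$- and $\C$-driven manipulations''. But designing such a $\rho$ \emph{is} the entire problem. As you yourself observe, without $\rho$ your predicate is $S$-provably equivalent to the standard $\PR_T$, which already satisfies $\M$; so $\rho$ must carry all of the content. The requirements you impose on $\rho$ are in direct tension: it must be stable under $T$-provable biconditionals (for $\E$) and under concatenation of witness tuples (for $\C$), yet it must separate some $\varphi_0$ from a $T$-consequence $\psi_0$. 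A Rosser-style ``no earlier refutation'' clause, your first suggestion, typically destroys precisely these closure properties. Your second suggestion, ``a clause defined via a fixed point'', is not a construction at all. Until $\rho$ is specified and each closure property checked against it, there is no argument.

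The paper resolves the tension by a different mechanism. Rather than appending a side condition to a conjunction-representation predicate, it fixes in advance a sentence $\xi$ that is not $T$-equivalent to any $\Sigma_1$ sentence, and then builds (self-referentially, via the formalized Recursion Theorem) finite sets $X_m$ closed under exactly the three operations needed for $\E$, $\C$, and $\D{3}$: passage along $T$-provable biconditionals with proofs $\leq m$, binary conjunction, and boxing by the predicate under construction. The key observation is that each of these operations preserves the property ``$T$-equivalent to some $\Sigma_1$ sentence'', so $\xi$ never enters any $X_m$ for standard $m$. The predicate $\PR_T^\dagger(x) :\equiv \exists y\,(x \in X_y \land \gn{\xi} \notin X_y)$ then satisfies $\PA \vdash \neg\,\PR_T^\dagger(\gn{\xi})$ outright, while $T \nvdash \neg\,\PR_T^\dagger(\gn{0=1})$ by Corollary~\ref{G2_refine_EC}.1; since $T \vdash 0=1 \to \xi$, this witnesses the failure of $\M$. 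The blocking is thus achieved not by a side condition but by choosing closure operations whose reach is provably bounded away from a fixed target.
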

\begin{proof}
We fix a $\Sigma_1$ provability predicate $\exists x\, \Proof_T(x, y)$ of $T$, where $\Proof_T(x, y)$ is a $\Delta_1(\PA)$ proof predicate $\Proof_T(x, y)$ of $T$ saying that ``$x$ is a $T$-proof of a formula $y$''. 
We may assume $\PA \vdash \Proof_T(x, y) \to y \leq x$. 
We fix a sentence $\xi$ that is not $\Sigma_1(T)$, that is, there is no $\Sigma_1$ sentence $\sigma$ such that $T \vdash \xi \leftrightarrow \sigma$ (cf.~\cite[Theorem 1.4]{Lin03}). 
For each number $m$, let $F_m$ be the set of all formulas whose G\"odel numbers are less than $m$. 

We define a provability predicate $\PR_T^\dagger(x)$ satisfying the requirements. 
In the definition of $\PR_T^\dagger(x)$, we use a family $\{X_{m, k}\}$ of finite set of formulas. 
By using the formalized Recursion Theorem, we may use $\PR_T^\dagger(x)$ itself in the definition of the family $\{X_{m, k}\}$. 

For each $m$, we recursively define the increasing sequence $X_{m, 0}, X_{m, 1}, \ldots$ of subsets of $F_m$ by referring to $\Proof_T(x, y)$ as follows:  

\begin{itemize}
    \item $X_{m, 0} = \{\varphi \in F_m \mid \varphi$ has a $T$-proof $p \leq m\}$. 

    \item Define $X_{m, k+1}$ to be the union of the following four sets: 

        \begin{enumerate}
            \item $X_{m, k}$, 
            \item $\{\varphi \in F_m \mid \exists \psi \in X_{m, k}$ s.t.~$\varphi \leftrightarrow \psi$ has a $T$-proof $p \leq m\}$, 
            \item $\{\varphi \land \psi \in F_m \mid \varphi, \psi \in X_{m, k}\}$, 
            \item $\{\PR_T^\dagger(\gn{\varphi}) \in F_m \mid \varphi \in X_{m, k}\}$. 
        \end{enumerate}
\end{itemize}
Let $X_m : = \bigcup_{k \in \omega} X_{m, k}$. 
The definition of $\{X_{m, k}\}$ can be carried out within $\PA$. 
Since $X_m \subseteq F_m$, it is easy to see that $X_m = X_{m, m}$ and so $x \in X_y$ is a $\Delta_1(\PA)$-definable binary relation. 
By using induction, it is proved that for $m' \geq m$ and $k' \geq k$, we have $X_{m,k} \subseteq X_{m', k'}$. 
These properties are also verified in $\PA$. 
Let $\PR^\dagger_T(x)$ be the formula
\[
    \exists y\, (x \in X_{y} \land \gn{\xi} \notin X_{y}). 
\]

For each natural number $m$, we prove that for all $k$, $(\varphi \in X_{m, k}$ \& $\xi \notin X_{m})$ implies $T \vdash \varphi$ by induction on $k$. 
If $(\varphi \in X_{m, 0}$ \& $\xi \notin X_{m})$, then $\varphi$ has a $T$-proof of $\varphi$, and so $T \vdash \varphi$.
Suppose that the statement holds for $k$. 
Assume $(\varphi \in X_{m, k+1}$ \& $\xi \notin X_{m})$ holds. 
We distinguish the following four cases: 
\begin{itemize}
    \item $\varphi \in X_{m, k}$: By the induction hypothesis, we obtain $T \vdash \varphi$. 

    \item there exists $\psi \in X_{m, k}$ such that $T \vdash \varphi \leftrightarrow \psi$: By the induction hypothesis, we have $T \vdash \psi$, and hence $T \vdash \varphi$. 

    \item $\varphi$ is of the form $\psi_0 \land \psi_1$ for some $\psi_0, \psi_1 \in X_{m, k}$: By the induction hypothesis, $T \vdash \psi_0$ and $T \vdash \psi_1$, and thus $T \vdash \varphi$. 

    \item $\varphi$ is of the form $\PR_T^\dagger(\gn{\psi})$ for some $\psi \in X_{m, k}$: Since $\xi \notin X_m$, we have that $\exists y\, (\gn{\psi} \in X_{y} \land \gn{\xi} \notin X_{y})$ ($\equiv \PR_T^\dagger(\gn{\psi})$) is a true $\Sigma_1$ sentence. 
    By $\Sigma_1$-completeness, we obtain $T \vdash \PR^\dagger_T(\gn{\psi})$. 
\end{itemize}
Since this argument can be carried out in $\PA$, we get 
\begin{equation}\label{eq100}
\PA \vdash \forall x\, (\PR^\dagger_T(x) \to \exists z\, \Proof_T(z, x)). 
\end{equation}
In a similar way, it is easily proved by induction on $k$ that for all $k$, $\varphi \in X_{m, k}$ implies that there exists a $\Sigma_1$ sentence $\sigma$ such that $T \vdash \varphi \leftrightarrow \sigma$. 
So, by the choice of $\xi$, we have that $\xi \notin X_m$ for all natural numbers $m$.

We show that $\PR^\dagger_T(x)$ is a provability predicate of $T$. 
Suppose $T \vdash \varphi$. 
Then, we find a $T$-proof $p$ of $\varphi$, and so $\varphi \in X_{p, 0} \subseteq X_p$. 
Since $\xi \notin X_p$, we have that $\PR_T^\dagger(\gn{\varphi})$ holds, and hence $S \vdash \PR_T^\dagger(\gn{\varphi})$ by $\Sigma_1$-completeness. 
If $S \vdash \PR_T^\dagger(\gn{\varphi})$, then $S \vdash \exists z\, \Proof_T(z, \gn{\varphi})$ by (\ref{eq100}). 
It follows that $T \vdash \varphi$. 

We prove that $\PR^\dagger_T(x)$ satisfies $\E$. 
Suppose $T \vdash \varphi \leftrightarrow \psi$. 
We find a standard $T$-proof $p$ of $\varphi \leftrightarrow \psi$. 
It suffices to prove $\PA \vdash \PR_T^\dagger(\gn{\varphi}) \to \PR_T^\dagger(\gn{\psi})$ because $\psi \leftrightarrow \varphi$ also has a standard $T$-proof. 
We reason in $\PA$: 
Suppose that $\PR_T^\dagger(\gn{\varphi})$ holds. 
Then, we find $m$ and $k$ such that $\varphi \in X_{m, k}$ and $\xi \notin X_m$. 
For $r: = \max \{m, p\}$, we have $\varphi \in X_{r, k}$ and $\xi \notin X_r$ because $p$ is standard. 
Then, $\psi \in X_{r, k+1}$. 
Hence $\PR_T^\dagger(\gn{\psi})$ holds. 

In the similar way, it is proved that $\PR_T^\dagger(x)$ also satisfies $\C$ and $\D{3}$. 

Finally, we prove that $\PR_T^\dagger(x)$ does not satisfy $\M$. 
Since $\PR_T^\dagger(x)$ satisfies $\E$, $\C$, and $\D{3}$, by Corollary \ref{G2_refine_EC}.1, we have $T \nvdash \neg\, \PR_T^\dagger(\gn{0=1})$. 
By the definition of $\PR^\dagger_T(x)$, we have $\PA \vdash \neg\, \PR_T^\dagger(\gn{\xi})$. 
Therefore, $T \nvdash \PR_T^\dagger(\gn{0=1}) \to \PR_T^\dagger(\gn{\xi})$. 
Since $T \vdash 0 = 1 \to \xi$, we conclude that $\PR_T^\dagger(x)$ does not satisfy $\M$. 
\end{proof}

We have already confirmed the optimality of ``$\C$ \& $\D{3} \Rightarrow$ non-$\Ros$'' (Theorem \ref{G2_C3}) and ``$\E$ \& $\D{3} \Rightarrow T \nvdash \ConS_T$'' (Theorem \ref{G2_refine_E}). 

Next, we consider the optimality of ``$\SC \Rightarrow T \nvdash \ConS_T$'' (a consequence of Theorem \ref{G2_refine_E}). 

\begin{itemize}
    \item $\D{3}$ is not sufficient for G2 of $\ConH_T$ by Fact \ref{examples}.1. 

    \item $\SC$ is not sufficient for G2 of $\Ros$ by the following Theorem \ref{ex2}. 
     
\end{itemize}

\begin{thm}\label{ex2}
When $S = T$, there exists a $\Sigma_1$ provability predicate satisfying $\SCU$ and $\Ros$, but not $\E$. 
\end{thm}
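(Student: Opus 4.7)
The plan is to construct $\PR_T^\ddagger(x)$ as a Rosser-style $\Sigma_1$ provability predicate of $T$ whose domain is further restricted to exclude a carefully chosen ``trap'' sentence $\xi$, so as to deliberately violate $\E$ while preserving $\SCU$ and $\Ros$. The construction follows the family-indexed recursion used in the proof of Theorem~\ref{ex1}, but with closure rules tailored to $\SCU$ and $\Ros$ rather than to $\E$ and $\C$.

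First I would fix a sentence $\xi$ that is independent of $T$ and whose G\"odel number cannot coincide with $\gn{\sigma(\bar{n})}$ for any $\Sigma_1$ formula $\sigma(x)$ and any natural number $n$. A convenient choice is $\xi := \ConL_T$: its syntactically $\Pi_1$ shape makes $\PA \vdash \forall x\, \gn{\sigma(\dot{x})} \neq \gn{\xi}$ immediate for each $\Sigma_1$ formula $\sigma(x)$, and $T \nvdash \xi$ is part of the paper's earlier machinery. Following the style of Theorem~\ref{ex1}, I would then recursively define a $\Delta_1(\PA)$ family $\{X_m\}$ of subsets of $F_m$, generated from (a)~formulas with a $T$-proof of code $\le m$ and (b)~formulas of the form $\sigma(\bar{n})$ with $\sigma(x) \in \Sigma_1$ and $\sigma(n)$ possessing a $\Sigma_1$-witness of code $\le m$, but with \emph{no} closure under provable equivalence. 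Setting
\[
\Proof_T^\ddagger(p, y) \;:\equiv\; y \in X_p \;\land\; y \neq \gn{\xi} \;\land\; \forall q \le p\, \neg\Proof_T(q, \dot{\neg}\, y),
\]
I define $\PR_T^\ddagger(y) := \exists p\, \Proof_T^\ddagger(p, y)$, which is $\Sigma_1$.

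The verification runs in four steps. (i)~$\PR_T^\ddagger$ numerates $T$-theorems, using $T \nvdash \xi$ together with $\Sigma_1$-soundness of $T$ for the converse direction. (ii)~$\SCU$ holds because the $\Sigma_1$-witness clause of $X_m$ places $\gn{\sigma(\dot{x})}$ into $X_p$ uniformly in $x$ whenever $\sigma(x)$ holds, the exclusion $\gn{\sigma(\dot{x})} \neq \gn{\xi}$ is discharged by the choice of $\xi$, and for the canonical bounded $p$ the Rosser side-condition reduces to a $\Delta_0$ truth verifiable in $T$. (iii)~$\Ros$ follows by the standard Rosser argument: a shortest $T$-proof $q_0$ of $\neg\varphi$ blocks all Rosser proofs of $\varphi$ of code $\ge q_0$, while the finitely many $p < q_0$ are handled individually via $\Delta_0$-decidability. (iv)~$\E$ fails on $\xi$ and $\psi := \xi \land 0 = 0$: one has $T \vdash \xi \leftrightarrow \psi$ and $\gn{\psi} \neq \gn{\xi}$, so $\PA \vdash \neg\PR_T^\ddagger(\gn{\xi})$ by construction, whereas $T \nvdash \neg\PR_T^\ddagger(\gn{\psi})$ by a L\"ob-style argument: since $\psi$ is not of the form $\sigma(\bar{n})$ for any $\Sigma_1$ $\sigma$, membership $\gn{\psi} \in X_p$ reduces to ordinary $T$-provability, so a refutation of $\PR_T^\ddagger(\gn{\psi})$ would yield, via $\D{2}$ for the underlying standard $\PR_T$, a contradiction with L\"ob's theorem applied to the non-theorem $\xi$.

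The main obstacle lies in step (ii): the Rosser clause $\forall q \le p\, \neg\Proof_T(q, \gn{\neg\sigma(\dot{x})})$ must be derivable in $T$ uniformly in $x$ whenever $\sigma(x)$ holds, and since $T$ cannot in general prove its own non-provability uniformly, the delicate combinatorial point is to design the generation of $\{X_m\}$ so that the $\Sigma_1$-witness of $\sigma(x)$ serves as a bounded certificate forcing the canonical $p$ to fall beneath any hypothetical $T$-proof of $\neg\sigma(\dot{x})$, making the Rosser clause automatically true and provably so for that choice of $p$.
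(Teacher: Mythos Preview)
Your construction attempts something considerably harder than what the paper does, and the step you yourself flag as ``the main obstacle'' is not actually resolved. For $\SCU$ you need $T$ to prove, uniformly in $x$, that the Rosser side-condition $\forall q \le p\, \neg\,\Proof_T(q, \gn{\neg\,\sigma(\dot{x})})$ holds at the canonical $p$ determined by a witness for $\sigma(x)$. Your suggestion that the witness ``forces the canonical $p$ to fall beneath any hypothetical $T$-proof of $\neg\,\sigma(\dot{x})$'' does not work as stated: there is no a~priori relationship between witness codes and proof codes, and asking $T$ to verify uniformly that no short refutation exists is a bounded reflection principle that $T$ does not prove. This is precisely why Rosser predicates typically fail $\SC$; your sketch does not escape the difficulty. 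Step (iv) is also underspecified: the ``L\"ob-style argument'' you invoke, once unwound, yields only $T \vdash \PR_T(\gn{\psi}) \to \PR_T(\gn{\neg\,\psi})$, and for $\psi$ provably equivalent to $\ConL_T$ that implication is already a consequence of formalized G2 together with $\D{3}$, producing no contradiction. (One can rescue the conclusion $T \nvdash \neg\,\PR_T^\ddagger(\gn{\psi})$ via proof-ordering flexibility in nonstandard models, but that is not the argument you give.)

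By contrast, the paper's proof is a few lines and avoids building anything from scratch. It takes an existing $\Sigma_1$ provability predicate $\PR_T(x)$ satisfying $\E$ and $\Ros$ but \emph{not} $\SC$ (supplied by Fact~\ref{examples}.2), and overrides its behaviour on syntactically $\Sigma_1$ sentences by setting
\[
\PR_T^*(x) :\equiv \bigl(\neg\,\Sigma_1(x) \land \PR_T(x)\bigr) \lor \bigl(\Sigma_1(x) \land \True_{\Sigma_1}(x)\bigr).
\]
Then $\SCU$ is immediate from the truth-predicate clause, $\Ros$ is inherited from $\PR_T(x)$, and the failure of $\E$ is extracted directly from the failure of $\SC$ for $\PR_T(x)$: pick a $\Sigma_1$ sentence $\sigma$ with $T \nvdash \sigma \to \PR_T(\gn{\sigma})$, choose $\chi$ with $T \vdash \chi$ so that $\sigma \land \chi$ is not $\Sigma_1$, and compare $\PR_T^*(\gn{\sigma}) \equiv \sigma$ with $\PR_T^*(\gn{\sigma \land \chi}) \equiv \PR_T(\gn{\sigma \land \chi})$. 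If you want to salvage your approach, the natural fix is to drop the Rosser clause on $\Sigma_1$ instances---but at that point you are essentially reproducing the paper's case split between $\Sigma_1$ and non-$\Sigma_1$ inputs.
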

\begin{proof}
    Let $\PR_T(x)$ be a $\Sigma_1$ provability predicate satisfying $\E$ and $\Ros$, but not $\SC$ (e.g.~Fact \ref{examples}.2).
    Let $\PR^*_T(x)$ be the $\Sigma_1$ formula 
    \[
        (\neg \, \Sigma_1(x) \land \PR_T(x)) \lor (\Sigma_1(x) \land \True_{\Sigma_1}(x)),
    \]
    where $\Sigma_1(x)$ is a $\Delta_1(\PA)$ formula naturally expressing that ``$x$ is a $\Sigma_1$ sentence'' and $\True_{\Sigma_1}(x)$ is a $\Sigma_1$ formula saying that ``$x$ is a true $\Sigma_1$ sentence'' (cf.~\cite{HP93}). 
    The formula $\PR^*_T(x)$ obviously satisfies $\SCU$.
    
    For any formula $\varphi$, we have 
    \[
        \PR_T^*(\gn{\varphi}) \equiv \begin{cases} \varphi & \text{if}\ \varphi\ \text{is a}\ \Sigma_1\ \text{sentence}, \\
        \PR_T(\gn{\varphi}) & \text{otherwise.}\end{cases}
    \]
    This yields that $\PR^*_T(x)$ is a provability predicate of $T$ because $S = T$. 
    Also, it follows from $\Ros$ for $\PR_T(x)$ that $\PR_T^*(x)$ satisfies $\Ros$. 

    Since $\PR_T(x)$ does not satisfy $\SC$, we find a $\Sigma_1$ sentence $\sigma$ such that $T \nvdash \sigma \to \PR_T(\gn{\sigma})$. 
    Let $\chi$ be a sentence such that $T \vdash \chi$ and $\sigma \land \chi$ is not $\Sigma_1$. 
    Since $T \vdash \sigma \leftrightarrow (\sigma \land \chi)$, by applying $\E$ for $\PR_T(x)$, we have $T \vdash \PR_T(\gn{\sigma}) \leftrightarrow \PR_T(\gn{\sigma \land \chi})$. 
    So, $T \vdash \PR_T(\gn{\sigma}) \leftrightarrow \PR_T^*(\gn{\sigma \land \chi})$. 
    By the choice of $\sigma$, we have $T \nvdash \sigma \to \PR_T^*(\gn{\sigma \land \chi})$, and hence $T \nvdash \PR_T^*(\gn{\sigma}) \to \PR_T^*(\gn{\sigma \land \chi})$. 
    It follows that $\PR_T^*(x)$ does not satisfy $\E$. 
\end{proof}

At last, we consider ``$\CB$ \& $\DCU \Rightarrow T \nvdash \ConH_T$'' (Theorem \ref{HB_refine}). 

\begin{itemize}
    \item $\DCU$ is not sufficient for G2 of $\ConL_T$ by Fact \ref{examples}.2. 

    \item $\{\CB, \DCU\}$ is not sufficient for G2 of $\ConS_T$ by Fact \ref{examples}.3. 
     
\end{itemize}

However, we do not yet know whether this version of G2 is optimal, since the following problem remains unsolved.

\begin{prob}\label{prob_2}
\begin{enumerate}
    \item Is there a $\Sigma_1$ provability predicate satisfying $\CB$ such that $T \vdash \ConH_T$?

    \item Is there a $\Sigma_1$ provability predicate satisfying $\DU{1}$ and $\DCU$ such that $T \vdash \ConH_T$?
\end{enumerate}
\end{prob}

Problem \ref{prob_2}.1 is closely related a problem proposed in the previous work, which has not been settled yet.

\begin{prob}[{\cite[Problem 2.17]{Kura20}}]
Is there a $\Sigma_1$ provability predicate satisfying $\M$ and $\CB$ such that $T \vdash \ConH_T$?
\end{prob}

We recall that the following question also has not been settled yet.

\begin{prob}[{\cite[Problem 2.22]{Kura20}}]
Is there a $\Sigma_1$ provability predicate satisfying $\MU$ such that $T \vdash \ConL_T$?
\end{prob}

We propose the following relating problem. 

\begin{prob}\leavevmode
\begin{enumerate}
    \item Is there a $\Sigma_1$ provability predicate satisfying $\M$ and $\SC$ such that $T \vdash \ConL_T$?

    \item Is there a $\Sigma_1$ provability predicate satisfying $\E$ and $\SC$ such that $T \vdash \ConL_T$?
\end{enumerate}
\end{prob}

\section*{Acknowledgements}

The author would like to thank Haruka Kogure for continuing discussions at all stages of this study. 
This work was supported by JSPS KAKENHI Grant Number JP23K03200.

\bibliographystyle{plain}
\bibliography{ref}

\end{document}